\date{}
\title{A remark on the Ramsey number of the hypercube}
\author{
Konstantin Tikhomirov
}
\address{
\medskip
\noindent
School of Mathematics\\
Georgia Institute of Technology\\
686 Cherry street\\
Atlanta, GA 30332, and}
\address{
\medskip
\noindent
Department of Mathematical Sciences\\
Carnegie Mellon University\\
Wean Hall 6113\\
Pittsburgh, PA 15213\\
\texttt{\small
e-mail:   ktikhomi@andrew.cmu.edu}
}
\thanks{The work is partially supported by the NSF Grant DMS 2054666}
\newtheorem{theorem}{Theorem}[section]
\newtheorem*{theorem*}{Theorem}
\newtheorem{lemma}[theorem]{Lemma}
\newtheorem{cor}[theorem]{Corollary}
\newtheorem{defi}[theorem]{Definition}
\newtheorem{prop}[theorem]{Proposition}
\theoremstyle{definition}
\newtheorem{Remark}[theorem]{Remark}
\newtheorem*{Remark*}{Remark}
\theoremstyle{plain}
\newcommand{\N}{\mathbb{N}}
\newcommand{\Exp}{\mathbb{E}}
\newcommand{\Event}{\mathcal{E}}
\def\Prob{{\mathbb P}}
\def\neigh{{\mathcal N}}
\def\cneigh{{\mathcal C\mathcal N}}
\begin{document}

\maketitle

\begin{abstract}
A well known conjecture of Burr and Erd\H os asserts that the Ramsey number $r(Q_n)$ of the 
hypercube $Q_n$ on $2^n$ vertices is of the order $O(2^n)$.
In this paper, we show that $r(Q_n)=O(2^{2n-c n})$
for a universal constant $c>0$, improving upon the previous best known bound 
$r(Q_n)=O(2^{2n})$, due to Conlon, Fox and Sudakov.
\end{abstract}

\section{Introduction}

The {\it Ramsey number} $r(H)$ of a graph $H$ is the smallest integer $N\in\N$ such that
any two-coloring of the complete graph on $N$ vertices contains a monochromatic copy of $H$.
Given an integer $n$, denote by $Q_n$ the $n$--dimensional hypercube viewed as a graph
(where the edge set is formed by the ``geometric'' edges of the hypercube).
Burr and Erd\H os \cite{BE1975} conjectured that the Ramsey number of $Q_n$
is of order $O(2^n)$ (where the implicit constant is absolute).
Improvements of the trivial bound $r(Q_n)\leq r(K_{2^n})$ were obtained by Beck \cite{B1983}, Graham, R\"{o}dl and Ruci\'{n}ski \cite{GRR2001}, Shi \cite{S2001,S2007}, Fox and Sudakov \cite{FS2009},
Conlon, Fox and Sudakov \cite{CFS2016}, as well as Lee \cite{Choongbum}.
Disregarding multiplicative constants, the best upper bound $r(Q_n)=O(2^{2n})$ prior to our work was obtained in \cite{CFS2016} using the dependent random choice, and it applies to arbitrary bipartite graphs with a given maximum degree.
\begin{theorem*}[{\cite[Theorem~4.1]{CFS2016}}]
For every bipartite graph $H$ on $m$ vertices with maximum degree $d$, one has $r(H)\leq 2^{d+6}m$.
\end{theorem*}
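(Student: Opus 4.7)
The plan is to apply the method of \emph{dependent random choice} (DRC). Set $N=2^{d+6}m$ and $2$-color $E(K_N)$. By pigeonhole, one color---say red---has edge density $p\geq 1/2$; call the red graph $G$. Let $V_1\cup V_2$ be the bipartition of $H$, so $|V_1|+|V_2|=m$ and every vertex has at most $d$ neighbors on the opposite side. It suffices to find a red copy of $H$ inside $G$.

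The strategy is an injective-then-greedy embedding. It is enough to exhibit $U\subseteq V(K_N)$ with (i)~$|U|\geq |V_1|$ and (ii)~every $d$-element subset $S\subseteq U$ has at least $m$ common $G$-neighbors. Given such a $U$, first embed $V_1$ injectively into $U$, and then process the vertices of $V_2$ one at a time: for $v\in V_2$ the image of $N_H(v)\subseteq V_1$ is a subset of $U$ of size at most $d$, so by (ii) its common $G$-neighborhood has $\geq m$ vertices, of which at most $m-1$ have been used, leaving an unused host for $v$.

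To produce $U$, pick an integer $t$ and sample $T=(w_1,\ldots,w_t)\in V(K_N)^{t}$ uniformly and independently; set $U_0=\bigcap_{i} N_G(w_i)$. By Jensen's inequality,
\[
\Exp|U_0|\;=\;\sum_{v\in V(K_N)}\Big(\tfrac{d_G(v)}{N}\Big)^{t}\;\geq\;Np^{t}\;\geq\;N\cdot 2^{-t},
\]
while the expected number of $d$-subsets $S\subseteq U_0$ with $|N_G(S)|<m$ is at most $\binom{N}{d}(m/N)^{t}$, since each such $S$ is contained in $U_0$ with probability $(|N_G(S)|/N)^{t}<(m/N)^{t}$. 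Whenever
\[
N\cdot 2^{-t}-\binom{N}{d}\big(m/N\big)^{t}\;\geq\;|V_1|,
\]
some outcome of $T$ gives $|U_0|$ minus the number of bad $d$-subsets at least $|V_1|$; deleting one vertex from each bad $d$-subset of $U_0$ then yields the desired $U$.

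The main obstacle is verifying the last inequality with the tight choice $N=2^{d+6}m$. The term $N\cdot 2^{-t}$ halves when $t$ grows by one, while $\binom{N}{d}(m/N)^{t}\leq (eN/d)^{d}(m/N)^{t}$ shrinks by the factor $N/m=2^{d+6}$ per unit, so the ``bad'' term falls much faster than the main term once $t$ exceeds $d$. The right regime is therefore $t$ close to $d$; using $\binom{N}{d}\leq(eN/d)^{d}$ together with the freedom to take $V_1$ to be the smaller bipartition class (so $|V_1|\leq m/2$) provides the slack needed to absorb the bad term and to pin down the constant $2^{d+6}$. Carrying out this parameter balance is the technical heart of the argument.
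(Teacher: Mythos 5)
This paper does not actually contain a proof of the cited theorem; it quotes it from \cite{CFS2016} and even remarks explicitly that the basic dependent random choice scheme sketched in \textbf{(I)--(II)} only yields $r(Q_n)\le 2^{2n+o(n)}$, whereas the precise bound $2^{d+6}m$ ``requires some additional ingredients which are not discussed here.'' Your proposal is exactly that basic scheme, in its weakest (fully deterministic) form: you sample $T$, take $U_0=\bigcap_i N_G(w_i)$, delete one vertex from each bad $d$-subset, and then embed $V_1$ injectively into the resulting $U$. This route cannot produce the constant $2^{d+6}$, and the obstruction is not a matter of ``carrying out the parameter balance'': the displayed inequality $N\cdot 2^{-t}-\binom{N}{d}(m/N)^t\ge|V_1|$ with $N=2^{d+6}m$ is simply false once $m$ is large. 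Indeed, to keep $N\cdot 2^{-t}\ge m/2$ you are forced to take $t\le d+7$, and then
$$\binom{N}{d}\Big(\frac{m}{N}\Big)^t\ \ge\ \Big(\frac{N}{d}\Big)^{d} 2^{-(d+6)t}\ \ge\ 2^{-(d+6)\cdot 7}\Big(\frac{m}{d}\Big)^{d},$$
which grows like $m^d$ and overwhelms $m/2$ as soon as $m$ exceeds a constant depending only on $d$. More generally, demanding that \emph{every} $d$-subset of $U$ have $\ge m$ common neighbours forces $(\log C)^2\gtrsim (d-1)\log m$ when $N=Cm$, so $C$ must grow with $m$; the deterministic-deletion DRC provably cannot give $r(H)=O_d(m)$.

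The missing ideas are precisely what the paper alludes to. One must switch to a \emph{randomized} embedding of $V_1$ into $U_0$ (as in step \textbf{(II)}), so that only the $m$ specific $d$-sets $N_H(v)$, $v\in V_2$, must land on good tuples rather than all $\binom{|U|}{d}$ of them, and even that needs further refinement (a re-weighting/cleaning of $U_0$, or a careful union bound exploiting that each vertex of $V_1$ lies in at most $d$ of the sets $N_H(v)$) to push the constant down to $2^{d+6}$. As written, your argument establishes a weaker statement of the form $r(H)\le m^{1+o(1)}$ for fixed $d$, not a linear-in-$m$ bound.
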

Let us remark at this point that, as was shown in \cite{GRR2000}, for every $d\geq 2$ and $m\geq d+1$
there exists a bipartite graph $H$
on $m$ vertices with the maximum degree at most $d$ and such that $r(H)\geq 2^{c'd}m$ for some constant $c'>0$.
Therefore, a proof of the aforementioned conjecture of Burr and Erd\H os or even a weaker bound $r(Q_n)=2^{n+o(n)}$
should necessarily make use of properties of the hypercube other than the size of its vertex set
and the vertex degrees.

The above theorem follows immediately as a corollary of an embedding theorem in the same work \cite{CFS2016}
(we provide a slightly simplified version here).
\begin{theorem*}[{\cite[Theorem~4.7]{CFS2016}}]
Let $H$
be a bipartite graph on $m$ vertices with maximum degree
$d\geq 2$.
If $G$ is a bipartite graph with edge density $\alpha\in(0,1]$ and at least $16d^{1/d}\alpha^{-d} m$
vertices in each part, then $H$ is a subgraph of $G$.
\end{theorem*}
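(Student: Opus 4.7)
The plan is to carry out a standard dependent random choice followed by a greedy embedding. Let $X \sqcup Y$ be the bipartition of $H$, with $|X|+|Y|=m$. The goal is to embed $X$ into $A$ and $Y$ into $B$, using only a moderate-sized ``core'' $U \subseteq A$ that behaves well with respect to common neighborhoods.

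First I would sample $T \subseteq B$ of size $t = d$, uniformly at random with replacement, and set $U := \{u \in A : T \subseteq N_B(u)\}$, the common neighborhood of $T$ inside $A$. A direct computation gives
\[
\Exp|U| \;=\; \sum_{u \in A}\bigl(\deg_B(u)/|B|\bigr)^t,
\]
and an application of Jensen's inequality (convexity of $x \mapsto x^t$) together with the edge-density assumption yields $\Exp|U| \geq |A|\,\alpha^t \geq N\alpha^d$, where $N = 16\, d^{1/d}\alpha^{-d} m$ is the lower bound on $|A|$ and $|B|$. So in expectation $U$ is already comfortably larger than $m$.

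Next, call a subset $S \subseteq A$ of size at most $d$ \emph{bad} if $|N_B(S)| < |Y|$. For any such $S$, $\Prob[S \subseteq U] = (|N_B(S)|/|B|)^t \leq (|Y|/|B|)^t$, so by a union bound the expected number of bad $d$-subsets of $U$ is at most $\binom{|A|}{d}(|Y|/|B|)^d$. The key numerical step is to check, using $d! \geq (d/e)^d$ and the specific choice $N = 16\, d^{1/d}\alpha^{-d} m$, that there is an outcome of $T$ for which $|U|$ exceeds both $|X|$ and the number of bad subsets it contains; deleting one vertex from each bad subset then produces $U' \subseteq A$ with $|U'| \geq |X|$ and the property that every subset of $U'$ of size at most $d$ has at least $|Y|$ common neighbors in $B$.

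Given $U'$, the embedding is routine: fix any injection $\phi\colon X \to U'$, then extend $\phi$ to $Y$ one vertex at a time. When processing $y \in Y$, its neighbors $N_X(y) \subseteq X$ have already been mapped to some $S \subseteq U'$ of size at most $d$, so $|N_B(S)| \geq |Y|$; since fewer than $|Y|$ vertices of $B$ have been used, we can choose a valid image for $y$.

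The main obstacle is the numerical balance in the second step: making $\Exp|U| - (\text{expected bad subsets}) \geq |X|$ is exactly what forces the particular constant $16\,d^{1/d}$ in the hypothesis, and this is where the factorial saving from $\binom{|A|}{d}$, bounded by Stirling's formula, has to be exploited carefully. The rest of the argument is a clean greedy embedding relying only on the common-neighborhood property established by the choice of $U'$.
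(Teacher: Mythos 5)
This statement is cited from \cite{CFS2016} and the paper does not reprove it; in fact, right after stating it, the paper explicitly remarks that the naive dependent-random-choice scheme {\bf(I)--(II)}---which is precisely what you are proposing---only yields the weaker estimate $r(Q_n)\leq 2^{2n+o(n)}$ (see Corollary~\ref{aljfhbefwhbfoufbowfub}), and that ``the stronger result of \cite[Theorem~4.1]{CFS2016} requires some additional ingredients which are not discussed here.'' So you should expect the argument you sketch to fall short, and it does.

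The concrete gap is in the step you yourself flag as the ``main obstacle.'' With $t=d$ and $|A|=|B|=N:=16\,d^{1/d}\alpha^{-d}m$, you have
\[
\Exp|U|\;\geq\;N\alpha^{d}\;=\;16\,d^{1/d}m,
\]
which is linear in $m$, whereas the expected number of bad $d$-subsets is bounded only by
\[
\binom{N}{d}\Big(\frac{m}{N}\Big)^{d}\;\leq\;\frac{N^{d}}{d!}\cdot\frac{m^{d}}{N^{d}}\;=\;\frac{m^{d}}{d!}\;\leq\;\Big(\frac{em}{d}\Big)^{d},
\]
which for fixed $d\geq 2$ grows like $m^{d}$. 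Already for $d=2$ this is about $1.85\,m^{2}$, exceeding $16\sqrt{2}\,m$ as soon as $m\gtrsim 12$; so $\Exp\big[|U|\big]-\Exp\big[\#\text{bad $d$-subsets}\big]$ is negative for all but tiny $m$, and there is no outcome of $T$ from which to delete your way to a good set $U'$. Note that the Stirling factor $d!\geq(d/e)^{d}$ only brings the bad-subset count down to $(em/d)^{d}$, which is still polynomial of degree $d>1$ in $m$---the ``factorial saving'' you allude to cannot close a gap that is a whole power of $m$.

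Replacing the deletion step by a uniformly random injection of $X$ into $U$ (as the paper's scheme {\bf(II)} does) and union-bounding over $y\in Y$ improves matters but still does not give the stated theorem in full generality: the fraction of bad ordered $d$-tuples among distinct elements of $U$ is then controlled only up to roughly $(C\alpha)^{-d}$ for an absolute constant $C$, which exceeds $1$ once $\alpha$ is a small constant, so the union bound over $|Y|$ vertices fails. (For the hypercube, where $\alpha=\tfrac12$, $d=n$, $m=2^{n}$, this fraction is about $n^{-1}2^{-n}=o(1/m)$ and the scheme works---this is exactly Corollary~\ref{aljfhbefwhbfoufbowfub}---but the theorem you are trying to prove allows all $\alpha\in(0,1]$.) One can also try $t\neq d$, but then $\Exp|U|\geq 16\,d^{1/d}m\,\alpha^{\,t-d}$ drops below $m$ before the bad-subset count becomes small enough; for example with $d=2$, $\alpha=1/4$ no integer $t$ makes $N\alpha^{t}-\binom{N}{d}(m/N)^{t}\geq m$ once $m$ is large. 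In short, the hypothesis $N=16\,d^{1/d}\alpha^{-d}m$ is not the one that makes your first-moment bookkeeping balance; getting the constant $16\,d^{1/d}$ is precisely what requires the extra ideas in \cite{CFS2016} beyond the single-round dependent random choice plus deletion that you outline.
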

The {\it dependent random choice} is a well established technique in extremal combinatorics
(see, among others, papers
\cite{AKS2003,CFS2016,FS2009,G1998,KR2001,S2003} as well as survey \cite{FS2011})
which allows us to generate collections of graph vertices with many common neighbors.
In the context of bipartite graph embeddings, the use of the dependent random choice
can be roughly outlined as follows (the technical details which we omit here will be considered later in the paper
with proper rigor).
\begin{itemize}
\item[\bf(I)] Given an ambient bipartite graph $G=(V^{up}_{G},V^{down}_{G},E_{G})$ with the vertex set
$V^{up}_{G}\sqcup V^{down}_{G}$, one constructs a collection $S$ ($|S|\geq m$) of vertices in $V^{up}_{G}$
such that for an $1-o(1/m)$ fraction of
$d$--tuples of vertices $(v_1,\dots,v_d)$ from $S$, the number of common neighbors of $(v_1,\dots,v_d)$
in $V^{down}_{G}$ is at least $m$. The construction procedure for the set $S$ with such properties
is a variation of the first moment method.
\item[\bf(II)] For a bipartite graph $H=(V^{up}_{H},V^{down}_{H},E_{H})$ on $m$ vertices, 
$V^{up}_{H}$ is mapped into a random $|V^{up}_{H}|$--tuple
of vertices of $G$ uniformly distributed on the set of $|V^{up}_{H}|$--tuples of distinct elements of $S$.
The assumption on $S$ from {\bf (I)} then guarantees that with probability close to one it is possible to find an injective mapping
$V^{down}_{H}\to V^{down}_{G}$ which preserves vertex adjacency thus producing an embedding of $H$ into $G$.
The injective mapping can be constructed iteratively, by embedding one vertex of $V^{down}_{H}$ at a time.
\end{itemize}
Whereas the above scheme is sufficient to prove that $r(Q_n)\leq 2^{2n+o(n)}$ (see Corollary~\ref{aljfhbefwhbfoufbowfub} and
Remark~\ref{aljehbfoweufb}), the stronger result
of \cite[Theorem~4.1]{CFS2016} requires some additional ingredients which are not discussed here.

\medskip

Note that if we do not
impose any {\it structural} assumptions on the common neighborhoods $\cneigh_{G}(v_1,\dots,v_d)$, $v_1,\dots,v_d\in S$, 
then the condition that $|\cneigh_{G}(v_1,\dots,v_d)|=\Omega(|V^{down}_{H}|)$ for a majority
of $d$--tuples in $S$ becomes essential for the step {\bf (II)} to work through.
More specifically, if the graph $G$ and sets $S\subset V^{up}_{G}$
and $T\subset V^{down}_{G}$ are such that
\begin{equation}\label{aoieuf49873ylakslbf}
\cneigh_{G}(v_1,\dots,v_d)\subset T\quad
\mbox{for most choices of $d$--tuples of distinct vertices $v_1,\dots,v_d\in S$}
\end{equation}
then
for any $d$--regular graph $H=(V^{up}_{H},V^{down}_{H},E_{H})$,
the random mapping $V^{up}_{H}\to S$ in {\bf (II)} can be extended
to an embedding of $H$ into $G$ with high probability
{\it only if } $|T|=\Omega(|V^{down}_{H}|)$.
The next example shows that conditions of type \eqref{aoieuf49873ylakslbf} hold for certain bipartite graphs, 
and thus the dependent random choice technique for bounding $r(Q_n)$ 
as outlined in {\bf (I)--(II)} hits a barrier around $2^{2n}$. 
We only sketch the construction here; its fully rigorous description is provided
for an interested reader in Appendix~\ref{lfuhb93476ajshbflj}.



Let $\varepsilon>0$ be an arbitrary small constant,
and consider a {\it random} bipartite graph $\Gamma=(V^{up}_{\Gamma},V^{down}_{\Gamma},E_{\Gamma})$
with $|V^{up}_{\Gamma}|=|V^{down}_{\Gamma}|= 2^{2n-\varepsilon n}$ in which the set $V^{down}_{\Gamma}$
is partitioned into $2^{n-\varepsilon n/2}$ subsets $V^{down}_{\Gamma}(i)$, $1\leq i\leq 2^{n-\varepsilon n/2}$,
of size $2^{n-\varepsilon n/2}$ each. Assume that
each vertex $v$ in $V^{up}_{\Gamma}$ is adjacent to all the vertices in
$$
\bigcup_{i\in I_v}V^{down}_{\Gamma}(i),
$$
where $I_v$ is a uniform random $2^{n-\varepsilon n/2-1}$--subset of $\{1,\dots,2^{n-\varepsilon n/2}\}$
and where $I_v$, $v\in V^{up}_{\Gamma}$, are mutually independent. Thus, $\Gamma$ has edge density $1/2$.
It can be shown that, assuming $n$ is large, with probability close to one $\Gamma$ has the following property:
for {\it every} subset $S$ of $V^{up}_{\Gamma}$ with $|S|\geq 2^{n-1}$ there is a subset $T\subset V^{down}_{\Gamma}$
with $|T|\leq 2^{n-\varepsilon n/4}$ such that for at least half of $n$--tuples
of vertices in $S$, the common neighborhood of the $n$--tuple is contained in $T$.
Conditioned on such realization of $\Gamma$, the randomized embedding of $Q_n$
into $\Gamma$ as described in {\bf (I)--(II)} will fail with probability close to one, regardless of how the set $S$ is constructed.
The deficiency of the scheme {\bf (I)--(II)} is thus not in choosing the set $S$ but in embedding $V^{up}_{H}$
into $S$ {\it uniformly} at random.

\medskip

In view of the last remarks, the questions of interest are which structural properties of the hypercube
should play a role in bounding its Ramsey number and whether the randomized embedding
scheme {\bf (I)--(II)} could be modified accordingly to accommodate those properties.
In this paper, we make some progress on those questions. The main result is
\begin{theorem}\label{thmain}
There are universal constants $n_0,c>0$ such that for every $n\geq n_0$,
and every bipartite graph $G=(V^{up}_{G},V^{down}_{G},E_{G})$ satisfying $|V^{up}_{G}|,|V^{down}_{G}| \geq 2^{2n-cn}$
and $|E_{G}|\geq \frac{1}{2}|V^{up}_{G}|\,|V^{down}_{G}|$,
the hypercube $Q_n$ can be embedded into $G$.
\end{theorem}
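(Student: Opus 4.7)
The plan is to implement the scheme (I)--(II) with a modification in step (II), replacing the uniform random embedding of $V^{up}_{Q_n}$ into $S$ by a structured random embedding that exploits the product decomposition $Q_n \cong Q_k \,\Box\, Q_{n-k}$ for an appropriate $k = \Theta(cn)$. Under this decomposition, $V^{up}_{Q_n}$ splits into $2^k$ ``slices'' of size $2^{n-k-1}$ each (indexed by the $Q_k$--coordinate), and the $n$ neighbors of any $w \in V^{down}_{Q_n}$ distribute among the slices in a prescribed pattern: $n-k$ of them lie in a single ``hub'' slice, and the remaining $k$ lie in $k$ distinct ``spoke'' slices.

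I would refine step (I) via a variant of dependent random choice that produces not just a single set $S \subset V^{up}_G$ but a partition $S = S_1 \sqcup \cdots \sqcup S_L$ (with $L \geq 2^k$ and each block of size at least $2^{n-k-1}$) satisfying a \emph{structured} DRC property: for most tuples $(v_1,\ldots,v_n)$ in which $n-k$ coordinates are drawn from one prescribed block and one coordinate from each of $k$ other prescribed blocks, the common neighborhood $\cneigh_G(v_1,\ldots,v_n)$ in $V^{down}_G$ has size at least $2^{n-1}$. Because this condition is strictly weaker than the analogous guarantee for uniform $n$--tuples (which is sharp at $|V^{up}_G|\approx 2^{2n}$, as the barrier example in the introduction shows), one should be able to establish it already when $|V^{up}_G| \geq 2^{2n-cn}$.

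In step (II) I would then map the $2^k$ slices of $V^{up}_{Q_n}$ injectively into $2^k$ distinct blocks of the partition, uniformly at random within each block. By construction, for every $w \in V^{down}_{Q_n}$ the images of the $n$ neighbors of $w$ form a tuple matching exactly the block pattern of the structured DRC guarantee, so the common neighborhood in $V^{down}_G$ has size at least $2^{n-1}$ with probability close to one. A standard iterative argument then embeds $V^{down}_{Q_n}$ one vertex at a time into these common neighborhoods, with a routine union bound ensuring injectivity.

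The main obstacle is the refined step (I): producing the partition and proving the structured DRC bound requires a first moment calculation adapted to slice-structured tuples rather than uniform ones, with the block sizes, the parameter $k$, and the DRC parameters delicately balanced so that the saving of a factor $2^{cn}$ in $|V^{up}_G|$ is actually realized rather than being absorbed by a worse constant elsewhere. I expect the combinatorial heart of the argument to concentrate there; once the partition is in hand, step (II) and the iterative embedding of $V^{down}_{Q_n}$ should proceed by standard DRC machinery.
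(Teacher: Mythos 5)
Your proposal correctly identifies the central idea that a facet-wise (``slice'' or ``block'') embedding of the hypercube is needed to break the $2^{2n}$ barrier, and your $Q_k\,\Box\,Q_{n-k}$ decomposition is essentially the facet decomposition used in the paper's block-embedding lemma (Lemma~\ref{ahgvhfgvfhgvcjhjh}). However, the crux of your argument --- the claim that a partition $S=S_1\sqcup\cdots\sqcup S_L$ with the ``structured DRC property'' can always be produced when $|V^{up}_G|\geq 2^{2n-cn}$ --- is unjustified and is, in fact, exactly what the paper circumvents rather than proves. Your reasoning that ``the structured condition is strictly weaker than the uniform guarantee'' and ``should be establishable'' at $2^{2n-cn}$ does not give a construction, and it is not at all clear such a partition exists for an arbitrary density-$\tfrac12$ bipartite graph $G$ of this size: the partition has to be compatible with the internal structure of $G$, and a priori the neighborhoods of vertices in $G$ need not cluster into $2^k$ well-behaved groups.

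The paper's actual route is a trichotomy (Proposition~\ref{vviyagcvuytcvuqtycv}), not a universal structured-DRC statement. It introduces the notion of $(p,M)$-condensed common neighborhoods precisely to detect when the plain scheme (I)--(II) fails. From there, an iterative process extracts, from \emph{any} density-$\tfrac12$ graph $G$ on $2^{2n-cn}$ vertices, one of three things: (a) a standard vertex pair whose collection of common neighborhoods is \emph{not} condensed, in which case a refined uniform DRC argument (Lemma~\ref{fljknfefjwfpiwjff}) embeds $Q_n$; (b) a subgraph of substantially higher density, to which Corollary~\ref{aljfhbefwhbfoufbowfub} applies directly; or (c) a \emph{block-structured} subgraph in the specific sense of Section~\ref{s:blockstructured} (blocks on the lower side, each block's neighborhood confined to a dense $\gamma$-fraction $S^{up}_\ell$ of $V^{up}$), to which the facet-wise embedding applies. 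Your proposal only corresponds to case (c), and you have not addressed why the other two regimes cannot occur, nor how to actually locate the blocks when they do exist. In the paper, the block structure is not assumed but is built by repeatedly extracting dense common-neighborhood subsets via Lemma~\ref{aifgvajhcvaiy}; without something like that iterative extraction, or the case analysis that makes it unnecessary when the graph is random-like or has a dense subgraph, the ``refined step (I)'' in your plan remains a genuine missing piece.
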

\begin{Remark*}
Our proof shows that one can take $c=0.03656$ assuming that $n_0$ is sufficiently large.
\end{Remark*}
As an immediate corollary, we get
\begin{cor}\label{alijfr0387yajklb}
Let $n_0,c>0$ be as in the last theorem. Then for every $n\geq n_0$,
the Ramsey number of the hypercube $Q_n$ satisfies $r(Q_n)\leq  2^{2n-cn+1}+2$.
\end{cor}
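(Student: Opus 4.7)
The corollary is a routine reduction from the Ramsey setting to the bipartite embedding statement of Theorem~\ref{thmain}. The plan is as follows.

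Set $N := 2^{2n-cn+1}+2$ and fix an arbitrary two-coloring (say red/blue) of the edges of $K_N$. The first step is to partition the vertex set of $K_N$ into two disjoint sets $V^{up}$ and $V^{down}$ with $|V^{up}|, |V^{down}| \geq 2^{2n-cn}$. This is possible because $N \geq 2\lceil 2^{2n-cn}\rceil$, where the ``$+2$'' slack in the definition of $N$ absorbs any integrality loss that arises when $2^{2n-cn}$ is not an integer.

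Next, I would consider the bipartite graph $G_0 = (V^{up}, V^{down}, E_0)$ consisting of all edges of $K_N$ between the two parts; it has exactly $|V^{up}|\cdot|V^{down}|$ edges. The two-coloring partitions $E_0$ into two classes, so by pigeonhole one class — without loss of generality the red one — contains at least $\tfrac{1}{2}|V^{up}|\cdot|V^{down}|$ edges. Let $G := (V^{up}, V^{down}, E_{\mathrm{red}})$ be the bipartite subgraph of red edges across the partition. Then $|V^{up}|,|V^{down}| \geq 2^{2n-cn}$ and $|E_{\mathrm{red}}| \geq \tfrac{1}{2}|V^{up}|\,|V^{down}|$, so $G$ satisfies the hypotheses of Theorem~\ref{thmain} verbatim.

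Applying Theorem~\ref{thmain} to $G$ produces a subgraph embedding of $Q_n$ into $G$. Since every edge of $G$ is a red edge of $K_N$, this yields a monochromatic (red) copy of $Q_n$ in the original two-coloring of $K_N$. As the initial coloring was arbitrary, we conclude $r(Q_n) \leq N = 2^{2n-cn+1}+2$, which is the statement of the corollary.

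There is no substantive obstacle: all the difficulty lies in Theorem~\ref{thmain}, and the remaining work is only bookkeeping. The one point worth double-checking is the arithmetic verifying that the two halves of the partition of $V(K_N)$ do meet the threshold $2^{2n-cn}$ even when this quantity is not an integer — this is exactly what the ``$+2$'' in the stated bound is engineered to guarantee.
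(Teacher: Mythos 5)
Your argument is correct and coincides with the paper's own proof, which is given in Remark~\ref{aljehbfoweufb}: partition the vertex set of $K_N$ into two halves each of size at least $2^{2n-cn}$, take the majority color across the bipartition, and apply Theorem~\ref{thmain}. The only cosmetic difference is how the integrality of $N$ is handled — the paper takes the largest even integer not exceeding $2^{2n-cn+1}+2$ and uses an equipartition, whereas you allow unequal parts and directly check that $N \geq 2\lceil 2^{2n-cn}\rceil$; both routes verify the same threshold, and you should just be careful to work with the integer $\lfloor 2^{2n-cn+1}+2\rfloor$ rather than the possibly non-integral quantity $2^{2n-cn+1}+2$ when speaking of $K_N$.
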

\begin{Remark}\label{aljehbfoweufb}
Corollary~\ref{alijfr0387yajklb} can be obtained from Theorem~\ref{thmain} via the following standard construction.
Let $N$ be the largest even integer not exceeding $2^{2n-cn+1}+2$, fix any $2$--coloring of edges of $K_N$,
and take any equipartition $V^{up},V^{down}$ of its vertex set, so that $|V^{up}|,|V^{down}| \geq 2^{2n-cn}$.
Then the monochromatic bipartite subgraph of $K_N$ corresponding to the majority color among
the edges connecting $V^{up},V^{down}$, satisfies the assumptions of Theorem~\ref{thmain}.
\end{Remark}

\medskip

We build the discussion of the proof ideas on the example of the random ambient graph $\Gamma$
considered above.
To avoid technical details whenever possible, we postulate that $\varepsilon n/2$ is an integer,
where $\varepsilon>0$ is a small constant.
Although the procedure {\bf (I)--(II)} cannot produce an embedding of $Q_n$ into $\Gamma$
with high probability, the embedding strategy can be modified in this setting as follows.
Take $w:=\varepsilon n/2$.
Let 
$$\mathcal T:=\big\{v\in \{-1,1\}^n:\;\mbox{vector $v$ has an odd number of $-1$'s}\big\},$$
and let $(\mathcal T_b)_{b\in\{-1,1\}^w}$ be a partition of $\mathcal T$, where
$$
\mathcal T_b:=\big\{v\in \mathcal T:\,(v_{n-w+1},\dots,v_n)=b\big\},\quad b\in \{-1,1\}^w.
$$
Observe that for every admissible $b$, the set $\mathcal T_b$ has cardinality $2^{n-w-1}$.
Now, we construct a random injective mapping $f$ of $\mathcal T$ into $V^{down}_{\Gamma}$.
Let $\phi:\{-1,1\}^w\to[2^w]$ be an arbitrary fixed bijection.
Then for every $b\in \{-1,1\}^w$ we let $f((v:\,v\in \mathcal T_b))$ be a random $2^{n-w-1}$--tuple
uniformly distributed on the collection of all $2^{n-w-1}$--tuples of distinct
elements of $V^{down}_{\Gamma}(\phi(b))$, and we require that
the random vectors $f((v:\,v\in \mathcal T_b))$, $b\in\{-1,1\}^w$
are mutually independent.
Thus, we split the part $\mathcal T$ of the hypercube vertices into blocks
according to which $(n-w)$--facet $\{v\in \{-1,1\}^n:\,(v_{n-w+1},\dots,v_n)=b\}$
a vertex belongs to, and then embed those blocks into corresponding sets $V^{down}_{\Gamma}(\phi(b))$.
Note that for every vertex $v^{up}$ in the part $\{-1,1\}^n\setminus \mathcal T$ of the hypercube, the majority (specifically, $n-w$)
of its neighbors belong to a single
block, and only the small number $w$ of its neighbors reside in other blocks.
Since all vertices within a given set $V^{down}_{\Gamma}(\phi(b))$ have the same set
of neighbors in $V^{up}_{\Gamma}$, one may expect that with high probability the set of common neighbors
for $f(\neigh_{Q_n}(v^{up}))$ in $\Gamma$ is of order $\Omega(2^{2n-\varepsilon n-1}\cdot 2^{-w-1})\gg 2^n$ (where the multiple $2^{-w-1}$ appears since we intersect $w+1$ random $2^{2n-\varepsilon n-1}$--subsets
of $V^{up}_{\Gamma}$).
Provided that the probability of that event is sufficiently close to one, by taking the union bound this would imply that with high probability the mapping $f:\mathcal T\to V^{down}_{\Gamma}$
can be extended to an embedding of $Q_n$ into $\Gamma$. We emphasize that the above description does not serve as a mathematical proof, but only highlights the idea of how the
randomized ``block'' embedding will be constructed. See Figure~\ref{ajnfeofhjnojfnoijfno} for an illustration.

\begin{figure}[h]\label{ajnfeofhjnojfnoijfno}
\caption{An illustration of the ``block'' embedding of the cube into the bipartite graph $\Gamma$. The lower part of the vertex set of $\Gamma$ is partitioned into blocks, so that within each block
every vertex has a same set of neighbors. The cube is embedded into $\Gamma$ so that each collection of vertices $\mathcal T_b$, $b=\pm 1$ is mapped into a single block. In the picture, the vertices of the cube which belong to $\mathcal T$ are enlarged,
and the dotted ellipses mark the facets of the cube corresponding to the blocks of vertices. The edge density of $\Gamma$ in this illustration is made greater than $1/2$ in view of the small number of blocks (otherwise,
no block embedding would be possible).}

\centering  

\includegraphics[width=0.8\textwidth]{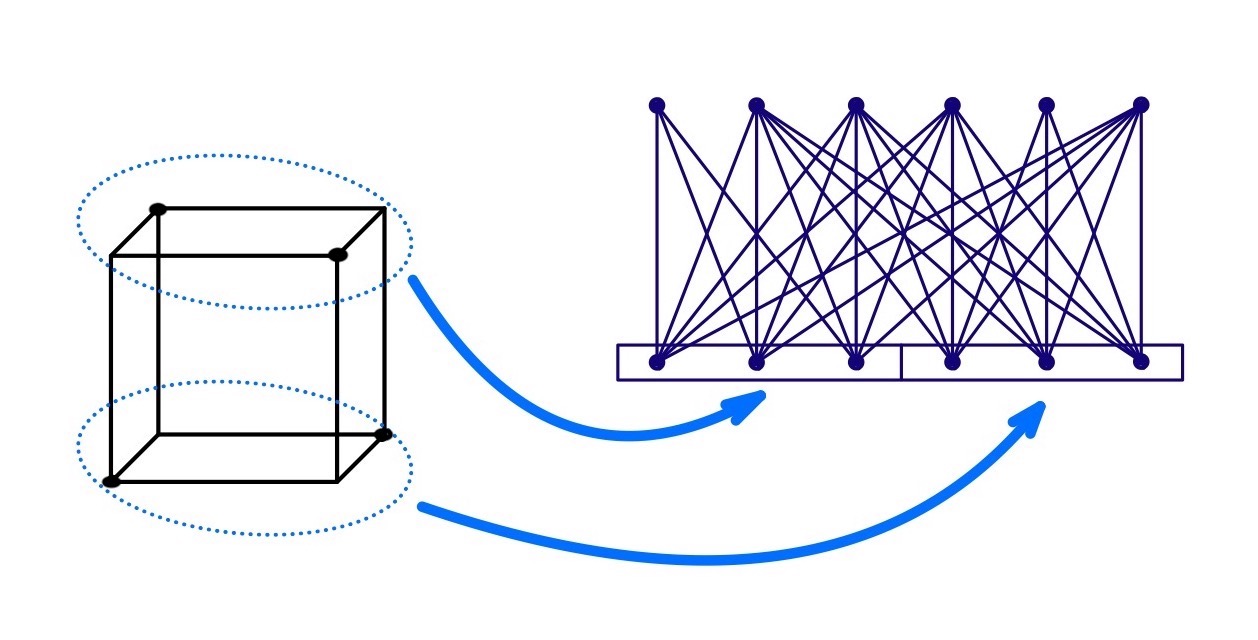}

\end{figure}

The proof of Theorem~\ref{thmain} utilizes ``block'' embeddings
similar to the above (although more technically involved), and is based on the following alternatives.
Given a bipartite graph $G$ on $2^{2n-cn}+2^{2n-cn}$ vertices of density $1/2$, at least
one of the following three conditions holds:
\begin{itemize}
\item[(a)] The dependent random choice method from {\bf(I)--(II)}
succeeds in embedding the hypercube $Q_n$ into the ambient graph $G$ since the common neighborhoods
of $n$--tuples of vertices in the set $S$ from {\bf(I)} tend to have small overlaps.
\item[(b)] $G$ contains a large subgraph of a density significantly larger than $1/2$.
In this case, we apply the dependent random choice to that subgraph of $G$ instead of $G$ itself to construct
the hypercube embedding, again according to the scheme {\bf(I)--(II)}.
\item[(c)] $G$ contains a subgraph having a ``block'' structure similar to that of
the random graph $\Gamma$ from the above example. In this case,
we construct a special randomized facet-wise embedding of the hypercube.
\end{itemize}
The structural part of this trichotomy (not implementing an actual embedding
of $Q_n$ and only concerned with the properties of the ambient graph $G$) is formally stated as
Proposition~\ref{vviyagcvuytcvuqtycv}.
This proposition is then combined with appropriate embedding procedures to obtain the main result of the note.

Below is the outline of the paper.

In Section~\ref{aigfviygvciygv}, we revise the notation used in this paper, and
recall a standard concentration inequality for independent Bernoulli random variables.

In Section~\ref{s:dep}, we provide a rigorous
description of the dependent random choice technique as outlined in {\bf(I)--(II)}.
The material for this section is taken, with some minor alterations, from \cite{FS2009,FS2011},
and is applied to deal with the cases (a) and (b) above.

In Section~\ref{s:disp}, we introduce the notion of {\it $(p,M)$--condensed} common neighborhoods
of tuples of graph vertices
and show that existence of a non-condensed collection of neighborhoods (with appropriately chosen parameters)
guarantees that the dependent random choice method produces an embedding of $Q_n$ into $G$.
The main result of this section --- Lemma~\ref{fljknfefjwfpiwjff} --- is used to treat the case (a).

In Section~\ref{s:blockstructured} we define {\it block-structured} bipartite graphs
and develop a randomized procedure for embedding the hypercube into graphs of that type.
That embedding procedure is crucial for the case (c).

Finally, in Section~\ref{s:dich} we state and prove the structural Proposition~\ref{vviyagcvuytcvuqtycv},
and complete the proof of the main result of the paper.

\bigskip

{\bf Acknowledgements.} The author would like to thank Han Huang for valuable discussions.

\section{Notation and preliminaries}\label{aigfviygvciygv}

The notation $[m]$ will be used for a set of integers $\{1,2,\dots,m\}$.
We will denote constants by $c,C$ etc. Sometimes we will add a subscript
to a name of a constant to assign it to an appropriate statement within the paper.
For example, the universal constant $c_{\text{\tiny\ref{kjfaoifboqfuifhbluhb}}}\in(0,1]$
is taken from Lemma~\ref{kjfaoifboqfuifhbluhb}.

We will occasionally use standard asymptotic notations $o(\cdot)$ and $\Omega(\cdot)$, for example, for two functions $f$ and $g$ on positive integers,
$f(n)=o(g(n))$ if $\lim\limits_{n\to\infty}\frac{f(n)}{g(n)}=0$ and $f(n)=\Omega(g(n))$ if $\liminf\limits_{n\to\infty}\frac{|f(n)|}{|g(n)|}>0$.

In this note, any bipartite graph $G$ is viewed as a triple $(V^{up}_{G},V^{down}_{G},E_{G})$,
where $V^{up}_{G}$ and $V^{down}_{G}$ are sets of ``upper'' and ``lower'' vertices, and $E_G$
is a collection of edges connecting vertices in $V^{up}_{G}$ to those in $V^{down}_{G}$.

Given a bipartite graph $G=(V^{up}_{G},V^{down}_{G},E_{G})$, its {\it edge density}
is defined as the ratio
$$
\frac{|E_{G}|}{|V^{up}_{G}|\,|V^{down}_{G}|}.
$$

For a vertex $v$ in a graph $G$, the set of neighbors of $v$ in $G$ will be denoted by $\neigh_G(v)$.
Further, given a collection of vertices $\{v^{(1)},\dots,v^{(r)}\}$, the set of their
common neighbors will be denoted by $\cneigh_G(v^{(1)},\dots,v^{(r)})$.

The hypercube on $2^n$ vertices $\{-1,1\}^n$, viewed as a bipartite graph, will be denoted by $Q_n$.

\medskip

We will need the following
standard concentration inequality for independent Bernoulli variables.
\begin{lemma}[Chernoff]\label{kjfaoifboqfuifhbluhb}
Let $b_1,\dots,b_n$ be i.i.d.\ (independent and identically distributed) Ber\-noulli($p$) random variables (with $p\in(0,1)$). Then
$$
\Prob\bigg\{\bigg|\sum_{i=1}^n b_i-pn\bigg|\geq t\bigg\}\leq 2\exp\bigg(
-\frac{c_{\text{\tiny\ref{kjfaoifboqfuifhbluhb}}} t^2}{pn}\bigg),\quad t\in(0,pn],
$$
where $c_{\text{\tiny\ref{kjfaoifboqfuifhbluhb}}}\in(0,1]$ is a universal constant.
\end{lemma}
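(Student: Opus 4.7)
The plan is to follow the standard Cram\'er--Chernoff (exponential moment / Markov) argument, handling the upper and lower deviations separately and then combining them via a union bound. Let $S_n := \sum_{i=1}^n b_i$, so $\Exp S_n = pn$.

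For the upper tail, I would apply Markov's inequality to $e^{\lambda S_n}$ for $\lambda > 0$: by independence of the $b_i$'s,
$$
\Prob\{S_n \geq pn + t\} \;\leq\; e^{-\lambda(pn+t)} \bigl(\Exp e^{\lambda b_1}\bigr)^n \;=\; e^{-\lambda(pn+t)} \bigl(1 - p + p e^{\lambda}\bigr)^n.
$$
Using the elementary inequality $1 - p + p e^{\lambda} \leq \exp\bigl(p(e^{\lambda}-1)\bigr)$, the right-hand side is at most $\exp\bigl(pn(e^{\lambda}-1) - \lambda(pn+t)\bigr)$. Optimizing over $\lambda$ (taking $\lambda = \log(1 + t/(pn))$, which is positive since $t > 0$) yields the classical bound
$$
\Prob\{S_n \geq pn + t\} \;\leq\; \exp\!\bigl(-pn\,\psi(t/(pn))\bigr),\qquad \psi(x) := (1+x)\log(1+x) - x.
$$
A standard estimate shows $\psi(x) \geq x^2/3$ for $x \in (0,1]$; since the hypothesis $t \in (0, pn]$ forces $t/(pn) \in (0,1]$, this gives $\Prob\{S_n \geq pn + t\} \leq \exp\bigl(-t^2/(3pn)\bigr)$.

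The lower tail is handled analogously, applying Markov's inequality to $e^{-\lambda S_n}$ with $\lambda > 0$. The same moment generating function estimate together with optimization in $\lambda$ produces the Cram\'er bound with rate function $\psi^-(x) := (1-x)\log(1-x) + x$, and one checks $\psi^-(x) \geq x^2/2$ for $x \in (0,1]$, yielding $\Prob\{S_n \leq pn - t\} \leq \exp\bigl(-t^2/(2pn)\bigr)$ in the range $t \in (0,pn]$. Summing the two one-sided bounds gives the stated two-sided inequality with, say, $c_{\text{\tiny\ref{kjfaoifboqfuifhbluhb}}} = 1/3$.

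There is no real obstacle here; the only minor care required is verifying the elementary numerical inequalities $\psi(x) \geq x^2/3$ and $\psi^-(x) \geq x^2/2$ on $(0,1]$, which follow from Taylor expansion and monotonicity of the remainder. Since the paper only needs the lemma as a black box with some universal constant, the exact value of $c_{\text{\tiny\ref{kjfaoifboqfuifhbluhb}}}$ is immaterial.
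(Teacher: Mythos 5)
The paper states Lemma~\ref{kjfaoifboqfuifhbluhb} without proof, treating it as a standard black-box concentration inequality, so there is no in-paper argument to compare against. Your Cram\'er--Chernoff argument is the standard proof and is correct: the exponential-moment bound, the estimate $1-p+pe^{\lambda}\le\exp(p(e^{\lambda}-1))$, the optimization $\lambda=\log(1+t/(pn))$ giving the rate function $\psi(x)=(1+x)\log(1+x)-x$, and the symmetric lower-tail computation with $\psi^{-}(x)=(1-x)\log(1-x)+x$ are all in order. The two numerical claims also check out: $\psi^{-}(x)\ge x^{2}/2$ follows from $(\psi^{-})''(x)=1/(1-x)\ge1$; for $\psi(x)\ge x^{2}/3$ on $(0,1]$, setting $g(x)=\psi(x)-x^{2}/3$ one has $g(0)=g'(0)=0$, $g'(x)=\log(1+x)-2x/3$ increases up to $x=1/2$ and then decreases, yet $g'(1/2)>0$ and $g'(1)=\log2-2/3>0$, so $g'\ge0$ on $[0,1]$ and hence $g\ge0$. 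Summing the two one-sided tails gives the stated two-sided bound with $c_{\text{\tiny\ref{kjfaoifboqfuifhbluhb}}}=1/3$, which lies in $(0,1]$ as required.
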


\section{The dependent random choice}\label{s:dep}

In this section, we revise the version of the dependent random choice method
outlined in the introduction in {\bf(I)--(II)}.
The material of this section, with some minor modifications, is taken from \cite{FS2009,FS2011}.
We provide the proofs for completeness.

\begin{lemma}\label{akejnfpeifjnpfijnqf}
Let $G' = (V^{up}_{G'},V^{down}_{G'},E_{G'})$ be a bipartite graph
of a density $\alpha\in(0,1]$; let $\beta\in(0,\alpha]$ and 
let $r,s\in \N$ be arbitrary parameters.
Let $X_1,X_2,\dots,X_s$ be i.i.d.\ uniform elements of $V^{down}_{G'}$.
Then
$$
\Exp\,|\cneigh_{G'}(X_1,X_2,\dots,X_s)|\geq \alpha^s\,|V^{up}_{G'}|,
$$
and
the expected number of ordered $r$--tuples of
elements of $\cneigh_{G'}(X_1,X_2,\dots,X_s)$ with at most $\beta^r|V^{down}_{G'}|$ common neighbors, is at most
$$
\beta^{rs}\,|V^{up}_{G'}|^r.
$$
\end{lemma}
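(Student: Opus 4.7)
My plan is to prove both assertions by a direct application of linearity of expectation, complemented in the first case by Jensen's inequality (convexity of $x\mapsto x^s$), and in the second case by a trivial bound on the total number of $r$-tuples.

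For the first inequality, I would write, for each $u\in V^{up}_{G'}$,
$$
\Prob\big\{u\in \cneigh_{G'}(X_1,\dots,X_s)\big\}=\prod_{i=1}^{s}\Prob\{X_i\in\neigh_{G'}(u)\}=\bigg(\frac{|\neigh_{G'}(u)|}{|V^{down}_{G'}|}\bigg)^{s},
$$
using the independence of the $X_i$'s and the fact that each $X_i$ is uniform in $V^{down}_{G'}$. Summing over $u\in V^{up}_{G'}$ and applying Jensen's inequality to the convex function $x\mapsto x^s$, together with the identity $\sum_{u\in V^{up}_{G'}}|\neigh_{G'}(u)|=|E_{G'}|=\alpha\,|V^{up}_{G'}|\,|V^{down}_{G'}|$, yields the desired lower bound $\alpha^s|V^{up}_{G'}|$.

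For the second inequality, I would again use linearity of expectation: letting $B$ denote the (random) number of ordered $r$-tuples $(u_1,\dots,u_r)\in (V^{up}_{G'})^r$ such that $u_i\in \cneigh_{G'}(X_1,\dots,X_s)$ for every $i$ and $|\cneigh_{G'}(u_1,\dots,u_r)|\leq \beta^r|V^{down}_{G'}|$, the key observation is that for a fixed such $r$-tuple the event $\{u_1,\dots,u_r\in \cneigh_{G'}(X_1,\dots,X_s)\}$ is equivalent to $\{X_1,\dots,X_s\in \cneigh_{G'}(u_1,\dots,u_r)\}$, hence has probability
$$
\bigg(\frac{|\cneigh_{G'}(u_1,\dots,u_r)|}{|V^{down}_{G'}|}\bigg)^{s}\leq \beta^{rs}.
$$
Summing this bound over all at most $|V^{up}_{G'}|^r$ such tuples finishes the proof.

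No step here looks delicate; the only thing to be careful about is that in the second statement the $r$-tuple is required to lie entirely in $\cneigh_{G'}(X_1,\dots,X_s)$, which, via the double-counting swap between $X$'s and $u$'s, is exactly what converts the condition $|\cneigh_{G'}(u_1,\dots,u_r)|\leq \beta^r|V^{down}_{G'}|$ into the probability bound $\beta^{rs}$. The whole proof is essentially a two-line application of Fubini/linearity of expectation, with no further combinatorial input needed.
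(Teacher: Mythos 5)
Your proof is correct and follows essentially the same route as the paper: linearity of expectation plus Jensen's inequality (convexity of $x\mapsto x^s$) for the first bound, and linearity of expectation combined with the probability swap $\Prob\{u_1,\dots,u_r\in\cneigh_{G'}(X_1,\dots,X_s)\}=\big(|\cneigh_{G'}(u_1,\dots,u_r)|/|V^{down}_{G'}|\big)^s$ for the second. No meaningful difference from the paper's argument.
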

\begin{proof}
Denote $A:=\cneigh_{G'}(X_1,X_2,\dots,X_s)$.
We have
\begin{align*}
\Exp\,|A|=\sum_{v\in V^{up}_{G'}}\Prob\{v\in A\}&
=\sum_{v\in V^{up}_{G'}}\Prob\{X_1\in\neigh_{G'}(v)\}^s\\
&=|V^{up}_{G'}|\,\sum_{v\in V^{up}_{G'}}\frac{1}{|V^{up}_{G'}|}
\bigg(\frac{|\neigh_{G'}(v)|}{|V^{down}_{G'}|}\bigg)^s\\
&\geq |V^{up}_{G'}|\,\bigg(\sum_{v\in V^{up}_{G'}}\frac{|\neigh_{G'}(v)|}{|V^{up}_{G'}|\,|V^{down}_{G'}|}\bigg)^s\\
&=|V^{up}_{G'}|\,\alpha^s.
\end{align*}
Further, for every $r$--tuple of distinct elements $y_1,\dots,y_r$ of $V^{up}_{G'}$, the probability of the event $\{y_1,\dots,y_r\}\subset A$
equals
$$
\Prob\big\{y_1,\dots,y_r\in \neigh_{G'}(X_1)\big\}^s
=\bigg(\frac{|\mbox{set of common neighbors of $y_1,\dots,y_r$}|}{|V^{down}_{G'}|}\bigg)^s.
$$
Thus, the expected number of ordered $r$--tuples in $A$ with at most $\beta^r |V^{down}_{G'}|$ common neighbors, is at most
$$
\beta^{rs}\,|V^{up}_{G'}|^r.
$$
\end{proof}

As a corollary, we have
\begin{cor}\label{aljfhbefwhbfoufbowfub}
For every $\varepsilon\in(0,1)$ there is
$n_{\text{\tiny\ref{aljfhbefwhbfoufbowfub}}}=n_{\text{\tiny\ref{aljfhbefwhbfoufbowfub}}}(\varepsilon)>0$
with the following property.
Let $G' = (V^{up}_{G'},V^{down}_{G'},E_{G'})$ be a bipartite graph
of density at least
$\alpha\in[\varepsilon,1]$, let $n\geq n_{\text{\tiny\ref{aljfhbefwhbfoufbowfub}}}$, and assume that
either
$$
|V^{up}_{G'}|\geq 2^{n+\varepsilon n},\quad |V^{down}_{G'}|\geq \frac{2^{n+\varepsilon n}}{\alpha^n}.
$$
or
$$
|V^{down}_{G'}|\geq 2^{n+\varepsilon n},\quad |V^{up}_{G'}|\geq \frac{2^{n+\varepsilon n}}{\alpha^n}.
$$
Then the hypercube $Q_n$ can be embedded into $G'$.
\end{cor}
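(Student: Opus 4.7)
The plan is to apply Lemma~\ref{akejnfpeifjnpfijnqf} (dependent random choice) to produce a random subset $A\subset V^{up}_{G'}$, and then embed $Q_n$ in two randomized stages: first a uniform injection of one part of $Q_n$ into a typical realization of $A$, then a greedy extension to the other part. By the symmetric role of the two sides of $G'$ in Lemma~\ref{akejnfpeifjnpfijnqf}, I assume the first of the two cases, namely $|V^{up}_{G'}|\geq 2^{n+\varepsilon n}$ and $|V^{down}_{G'}|\geq 2^{n+\varepsilon n}/\alpha^n$. Passing to the $2^{n+\varepsilon n}$ vertices of $V^{up}_{G'}$ of largest degree only increases the edge density, so I set $|V^{up}_{G'}|=U:=2^{n+\varepsilon n}$ without loss of generality, along with $D:=|V^{down}_{G'}|$, $m:=2^{n-1}$, and $L:=\log_2(1/\alpha)\in[0,\log_2(1/\varepsilon)]$.

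I apply Lemma~\ref{akejnfpeifjnpfijnqf} with $r=n$, $\beta:=(m/D)^{1/n}$ --- so ``bad'' $n$-tuples are those with at most $m$ common neighbors --- and an integer $s$ in the range $(n+O(1))/(\varepsilon n+1)\leq s\leq \varepsilon n/L$. This range is non-empty once $n$ exceeds some $n_{\text{\tiny\ref{aljfhbefwhbfoufbowfub}}}(\varepsilon)$ of order $\log_2(1/\varepsilon)/\varepsilon^2$. The upper constraint on $s$ forces $\Exp|A|\geq \alpha^s U\geq 2m$; the lower constraint, combined with $\alpha^n D\geq 2^{n+\varepsilon n}$, gives $(\alpha^n D/m)^s\geq 32m$.

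The crux is to use Jensen's inequality $\Exp|A|^n\geq(\Exp|A|)^n\geq(\alpha^s U)^n$, together with the elementary $\Exp[|A|^n\mathbf{1}_{\{|A|<m\}}]\leq m^n$ and the lemma's estimate $\Exp[\#\{\text{bad }n\text{-subsets in }A\}]\leq (m/D)^sU^n/n!$, to show
$$\Exp\Bigl[\mathbf{1}_{\{|A|\geq m\}}\bigl(|A|^n-4m\cdot n!\cdot\#\{\text{bad subsets}\}\bigr)\Bigr]\geq (\alpha^sU)^n-m^n-\frac{4 m^{s+1}U^n}{D^s}>0.$$
Hence there is a realization $S$ of $A$ with $|S|\geq m$ and fewer than $|S|^n/(4m\cdot n!)$ bad $n$-subsets. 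Since $|S|\geq 2^{n-1}$ is much larger than $n^3$, one has $\binom{|S|}{n}\geq |S|^n/(2\cdot n!)$, so the bad-subset \emph{fraction} among $n$-subsets of $S$ is at most $1/(2m)$. Fixing such $S$ and taking a uniform random injection $f_0\colon V^{up}_{Q_n}\to S$, one sees that for every $v\in V^{down}_{Q_n}$ the image $f_0(\neigh_{Q_n}(v))$ is a uniform random $n$-subset of $S$, hence bad with probability at most $1/(2m)$. A union bound over the $m$ vertices of $V^{down}_{Q_n}$ shows that, with probability at least $1/2$, every such image has at least $m$ common neighbors in $V^{down}_{G'}$. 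Fixing such $f_0$ and extending to $V^{down}_{Q_n}$ one vertex at a time yields the desired embedding: the common neighborhood has size $\geq m$ at every step, while fewer than $m$ target vertices have been used.

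The main obstacle is the parameter balance. The classical ``alteration'' form of dependent random choice, which removes one vertex per bad subset from $A$, would require $L\leq\varepsilon^2$, i.e.\ $\alpha$ nearly equal to $1$, which is far too restrictive for the full range $\alpha\in[\varepsilon,1]$. The workaround is to bound the bad \emph{fraction} rather than the absolute bad count, through Jensen's inequality applied to $|A|^n$; this reduces the demand on $D=|V^{down}_{G'}|$ by a factor of roughly $\alpha^{-n}$, precisely the inflation present in the hypothesis $|V^{down}_{G'}|\geq 2^{n+\varepsilon n}/\alpha^n$.
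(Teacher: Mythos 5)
Your proposal is correct and follows essentially the same route as the paper: apply Lemma~\ref{akejnfpeifjnpfijnqf} to extract a set $S\subset V^{up}_{G'}$ of size at least $2^{n-1}$ in which the fraction of $n$-tuples with few common neighbors is $o(2^{-n})$, embed one part of $Q_n$ into $S$ uniformly at random, take a union bound over the other part, and finish by a greedy extension. The only differences are cosmetic: you truncate $V^{up}_{G'}$ to its $2^{n+\varepsilon n}$ highest-degree vertices and derandomize via a single Jensen-weighted expectation $\Exp[\mathbf{1}_{\{|A|\ge m\}}(|A|^n-4m\,n!\cdot\#\text{bad})]>0$, whereas the paper first secures $|A|\ge\tfrac12\alpha^s|V^{up}_{G'}|$ with probability $\ge\tfrac12\alpha^s$ and then applies Markov's inequality to the bad-tuple count — both yield the same small bad fraction and the same embedding.
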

\begin{proof}
Fix any $\varepsilon\in(0,1)$. We will assume that $n$ is large.
Let $G'$ be the graph satisfying the
above assumptions. 
We can suppose without loss of generality that
$|V^{up}_{G'}|\geq 2^{n+\varepsilon n}$, $|V^{down}_{G'}|\geq \frac{2^{n+\varepsilon n}}{\alpha^n}$.
Set
$$
s:=\bigg\lfloor\frac{\log(|V^{up}_{G'}|/2^n)}{\log(1/\alpha)}\bigg\rfloor,\quad
\beta:=\frac{2}{|V^{down}_{G'}|^{1/n}}.
$$
Observe that $\beta\leq 2^{-\varepsilon}\alpha$ and that
$$
\frac{|V^{up}_{G'}|}{2^n}\geq 
2^{\varepsilon n}= \varepsilon^{-\varepsilon n/\log_2(1/\varepsilon)}
\geq\frac{1}{\alpha^{\varepsilon n/\log_2(1/\varepsilon)}}
$$
implying
\begin{equation}\label{alkrhjbe43087aljkbf}
s\geq \lfloor \varepsilon n/\log_2(1/\varepsilon)\rfloor.
\end{equation}
Let $X_1,X_2,\dots,X_s$ be i.i.d.\ uniform elements of $V^{down}_{G'}$, and denote
$A:=\cneigh_{G'}(X_1,X_2,\dots,X_s)$.
In view of Lemma~\ref{akejnfpeifjnpfijnqf}, $\Exp\,|A|\geq \alpha^s\,|V^{up}_{G'}|$
whereas deterministically $|A|\leq |V^{up}_{G'}|$.
This implies that with probability at least $\frac{1}{2}\alpha^s$,
\begin{equation}\label{fvuachgvuyvqiyv}
|A|\geq \frac{1}{2}\alpha^s\,|V^{up}_{G'}|\geq 2^{n-1}.
\end{equation}
Combining this with the second assertion of the lemma, we get that with a positive probability the set $A$
satisfies \eqref{fvuachgvuyvqiyv}, and
the number of ordered $n$--tuples of
elements of $A$ with at most $\beta^n|V^{down}_{G'}|$ common neighbors is at most
$4\alpha^{-s}\beta^{ns}\,|V^{up}_{G'}|^n$.
We fix such realization of $A$ for the rest of the proof.

Let $(Y_1,\dots,Y_n)$ be a uniform random ordered $n$--tuple of distinct elements in $A$.
In view of the above, the probability that $(Y_1,\dots,Y_n)$ have less than $\beta^n|V^{down}_{G'}|$ common neighbors,
is at most
$$
\frac{4\alpha^{-s}\beta^{ns}\,|V^{up}_{G'}|^n}{|A|\,(|A|-1)\,\cdots\,(|A|-n+1)}
\leq\frac{4\alpha^{-s}\beta^{ns}\,|V^{up}_{G'}|^n}{\big(\frac{1}{2}\alpha^s\,|V^{up}_{G'}|-n\big)^n}
\leq 8\cdot 2^n\,\bigg(\frac{1}{\alpha}\Big(\frac{\beta}{\alpha}\Big)^{n}\bigg)^s,
$$
where in the last inequality we used \eqref{fvuachgvuyvqiyv} and our assumption that $n$ is sufficiently large.
Further, we have $\frac{1}{\alpha}\big(\frac{\beta}{\alpha}\big)^{n}\leq \frac{1}{\varepsilon}\,2^{-\varepsilon n}<1$,
and, in view of  \eqref{alkrhjbe43087aljkbf},
\begin{equation}\label{alfjhblfhjebolhbolu}
8\cdot 2^n\,\bigg(\frac{1}{\alpha}\Big(\frac{\beta}{\alpha}\Big)^{n}\bigg)^s
\leq 8\cdot 2^n\bigg(\frac{1}{\varepsilon}\,2^{-\varepsilon n}\bigg)^{\lfloor \varepsilon n/\log_2(1/\varepsilon)\rfloor}
<2^{-n+1},
\end{equation}
whenever $n$ is greater than a large constant multiple of $\varepsilon^{-2}\,\log_2(1/\varepsilon)$.
Set
$$\mathcal T:=\big\{v\in \{-1,1\}^n:\;\mbox{vector $v$ has an odd number of $-1$'s}\big\},$$
and let $f(\mathcal T)$ be the uniform random $2^{n-1}$--tuple of distinct elements in $A$.
Then, by \eqref{alfjhblfhjebolhbolu}, with a positive probability for every $v\in\{-1,1\}^n\setminus \mathcal T$,
the set of vertices $f(\neigh_{Q_n}(v))$ has at least $\beta^n|V^{down}_{G'}|\geq 2^{n-1}$
common neighbors in $G'$. It follows that with a positive probability the mapping $f:\mathcal T\to A$
can be extended to an embedding of $Q_n$ into $G'$. 
\end{proof}

\medskip

The last statement immediately implies the bound $r(Q_n)\leq 2^{2n+o(n)}$ via the standard
construction which we already mentioned in the introduction; see Remark~\ref{aljehbfoweufb} there.

\section{Condensed common neighborhoods}\label{s:disp}

The goal of this section is to investigate graph embeddings using dependent random choice
in the setting when common neighborhoods of subsets of vertices of the host graph tend to have small overlaps. 

Lemma~\ref{akejnfpeifjnpfijnqf} gives a probabilistic description of the set of common neighbors
of i.i.d.\ uniform random vertices in a bipartite graph.
We start by 
considering a de-randomization of the lemma similar to the one in the proof
of Corollary~\ref{aljfhbefwhbfoufbowfub}.
The de-randomization is accomplished by an application of Markov's inequality and
a union bound estimate. 
For convenience, we introduce a technical definition of a {\it standard}
vertex pair which groups together the properties useful for us.

\begin{defi}\label{pidufqpifunfpinfpiqnpi}
Let $G' = (V^{up}_{G'},V^{down}_{G'},E_{G'})$ be a bipartite graph
of density at least $\alpha\in(0,1]$, and let $\alpha_0\in(0,\alpha)$, $\mu\in(0,\alpha_0/2]$, $r\in \N$, and $K>0$.
An ordered pair $(v_1,v_2)$ of [not necessarily distinct]
vertices in $V^{down}_{G'}$ is
{\bf $(\alpha_0,\alpha,\mu,r,K)$--standard} if the following is true:
\begin{itemize}
\item The number of common neighbors of $v_1,v_2$ in $G'$ is at least $(1-\mu)\alpha^2\,|V^{up}_{G'}|$;
\item For every $1\leq k\leq |\cneigh_{G'}(v_1,v_2)|$, $m\geq1$, and for every finite collection
$(I_j)_{j=1}^m$ of subsets of $[k]$ satisfying $|I_j|=r$, $1\leq j\leq m$, we have:
if $(Y_i)_{i=1}^k$
is a random $k$--tuple of vertices in $\cneigh_{G'}(v_1,v_2)$ uniformly distributed
on the set of $k$--tuples of distinct vertices in $\cneigh_{G'}(v_1,v_2)$ then
with probability at least $1/2$,
\begin{align*}
&\big|\big\{
1\leq j\leq m:\;|\cneigh_{G'}(Y_i,\,i\in I_j)|\leq \beta^r|V^{down}_{G'}|
\big\}\big|\\
&\hspace{2cm}\leq m\cdot K\beta^{2r}\,\alpha^{-2r}\quad \mbox{for every}\quad\beta\in [\alpha_0,\alpha].
\end{align*}
\end{itemize}
\end{defi}

\begin{Remark}
Regarding the first part of the above definition, note that, according to Lem\-ma~\ref{akejnfpeifjnpfijnqf}, the expected number of common neighbors
for a random pair of vertices $v_1,v_2\in V^{down}_{G'}$ is at least $\alpha^2\,|V^{up}_{G'}|$.
The extra multiple $1-\mu$ controls acceptable deviation from the expected cardinality.
\end{Remark}

\begin{Remark}
The second condition can be roughly interpreted as ``the size of a common neighborhood
$\cneigh_{G'}(y_1,\dots,y_r)$, for $y_1,\dots,y_r\in \cneigh_{G'}(v_1,v_2)$, is typically 
of order at least $\Omega(K^{-1/2}\alpha^{r}|V^{down}_{G'}|)$''. Again, according to Lemma~\ref{akejnfpeifjnpfijnqf}, for a random $r$--tuple
of elements in $V^{up}_{G'}$ their common neighborhood in $V^{down}_{G'}$ has expected cardinality at least $\alpha^{r}|V^{down}_{G'}|$.
The rather complicated second part of the
definition involving the collections $(I_j)_{j=1}^m$ will turn out convenient when dealing with a standard vertex pair in the proof of Lemma~\ref{fljknfefjwfpiwjff}.
Similarly to the first condition, we introduce a parameter ($K$) to control the deviation of the actual set size from the expected cardinality in uniform random setting.
\end{Remark}

\begin{lemma}[Existence of standard pairs of vertices]\label{dfalnkfjnefwehfbqlwuh}
For every $\alpha_0\in(0,1)$ and $\mu\in(0,\alpha_0/2]$ there is
$C_{\text{\tiny\ref{dfalnkfjnefwehfbqlwuh}}}
=C_{\text{\tiny\ref{dfalnkfjnefwehfbqlwuh}}}(\alpha_0,\mu)\geq 1$ with the following property.
Let $\alpha\in(\alpha_0,1]$, $r\geq 2$, and let $G' = (V^{up}_{G'},V^{down}_{G'},E_{G'})$ be a bipartite graph
of density at least $\alpha$. Assume that
\begin{equation}\label{afnafkjfajhbfgavfg}
\alpha^2\,|V^{up}_{G'}|\geq r^2.
\end{equation}
Then
there exists an $(\alpha_0,\alpha,\mu,r,C_{\text{\tiny\ref{dfalnkfjnefwehfbqlwuh}}} r^3)$--standard
ordered pair $(v_1,v_2)$ in $V^{down}_{G'}$.
\end{lemma}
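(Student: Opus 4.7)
I would apply the first moment method to a uniformly random ordered pair $(X_1,X_2)$ of elements of $V^{down}_{G'}$. Writing $N:=|\cneigh_{G'}(X_1,X_2)|$ and letting $B(X_1,X_2,\beta)$ denote the number of ordered $r$-tuples of distinct vertices in $\cneigh_{G'}(X_1,X_2)$ whose common neighborhood in $V^{down}_{G'}$ has cardinality at most $\beta^r|V^{down}_{G'}|$, Lemma~\ref{akejnfpeifjnpfijnqf} with $s=2$ yields $\Exp\,N\ge\alpha^2|V^{up}_{G'}|$ and $\Exp\,B(X_1,X_2,\beta)\le\beta^{2r}|V^{up}_{G'}|^r$. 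The plan is to show that with positive probability the random pair simultaneously satisfies a strong lower bound on $N$ and upper bounds on $B(X_1,X_2,\beta)$ at every $\beta$ in a suitable discretization of $[\alpha_0,\alpha]$; any such realization will serve as the desired $(v_1,v_2)$.

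\textbf{Selection of $(v_1,v_2)$.} I would use the geometric grid $\beta_j:=\alpha_0(1+1/r)^j$ for $0\le j\le L$, where $L=O(r\log(1/\alpha_0))$ is the smallest integer with $\beta_L\ge\alpha$; by construction, $\beta_j^{2r}\le e^2\beta_{j-1}^{2r}$ on adjacent intervals. A reverse Markov bound applied to $N$, which is bounded above by $|V^{up}_{G'}|$, shows that the event $\{N\ge(1-c/r)\alpha^2|V^{up}_{G'}|\}$ has probability at least $c\alpha^2/r$ for an appropriate absolute constant $c>0$. Choosing $c$ small (and replacing $c$ by $\mu r$ in the low-$r$ regime $r<1/\mu$, which confines $r$ to a bounded range absorbed into the final constant) guarantees $(1-c/r)\ge(1-\mu)$, so this event implies the first bullet of Definition~\ref{pidufqpifunfpinfpiqnpi}. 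Markov's inequality yields $\Prob\{B(X_1,X_2,\beta_j)>\lambda\beta_j^{2r}|V^{up}_{G'}|^r\}\le 1/\lambda$, and choosing $\lambda$ to be a sufficiently large multiple of $Lr/\alpha_0^2$ allows a union bound over the $L$ grid points together with the reverse-Markov event to leave positive probability for some pair $(v_1,v_2)$ with both $N\ge(1-c/r)\alpha^2|V^{up}_{G'}|$ and $B(v_1,v_2,\beta_j)\le\lambda\beta_j^{2r}|V^{up}_{G'}|^r$ for every $j$; fix any such pair.

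\textbf{Verification and the main obstacle.} For the chosen $(v_1,v_2)$, and for any collection $(I_j)_{j=1}^m$ of $r$-subsets of $[k]$, the induced tuple $(Y_i:i\in I_j)$ is uniformly distributed on ordered $r$-tuples of distinct elements of $\cneigh_{G'}(v_1,v_2)$, so the expected number of $j$'s for which this tuple has common neighborhood at most $\beta^r|V^{down}_{G'}|$ equals $mB(v_1,v_2,\beta)/(N(N-1)\cdots(N-r+1))$. The hypothesis $\alpha^2|V^{up}_{G'}|\ge r^2$ together with the stronger bound $N\ge(1-c/r)\alpha^2|V^{up}_{G'}|$ gives $N(N-1)\cdots(N-r+1)\ge c_0\alpha^{2r}|V^{up}_{G'}|^r$ for an absolute positive constant $c_0$; this step is the \emph{main obstacle}, since had we relied only on the weaker condition $N\ge(1-\mu)\alpha^2|V^{up}_{G'}|$, the falling factorial would have lost a factor $(1-\mu)^{-r}$ incompatible with any polynomial-in-$r$ conclusion. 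Substituting the upper bound on $B(v_1,v_2,\beta_j)$, the expectation at level $\beta_j$ is at most a constant multiple of $\lambda\cdot m\beta_j^{2r}/\alpha^{2r}$; a further application of Markov (costing a factor of $2$), combined with monotonicity of the counts in $\beta$ and a union bound over the $L$ grid points, yields the second bullet of Definition~\ref{pidufqpifunfpinfpiqnpi} with $K$ of order $L\lambda=O(L^2r/\alpha_0^2)=O(r^3\log^2(1/\alpha_0)/\alpha_0^2)$. Absorbing the $\alpha_0$- and $\mu$-dependent factors into a constant $C_{\text{\tiny\ref{dfalnkfjnefwehfbqlwuh}}}=C_{\text{\tiny\ref{dfalnkfjnefwehfbqlwuh}}}(\alpha_0,\mu)$, we obtain the claimed standard pair with parameter $K=C_{\text{\tiny\ref{dfalnkfjnefwehfbqlwuh}}}r^3$.
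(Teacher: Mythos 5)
Your proof is correct and follows essentially the same outline as the paper's: a two-round Markov/union-bound argument over a grid of $\beta$-values, where the only cosmetic difference is that the paper uses an arithmetic grid of $r$ points $\beta_\ell=\alpha\bigl(1-\tfrac{\alpha-\alpha_0}{\alpha}\tfrac{\ell}{r}\bigr)$ rather than your geometric grid of $O(r\log(1/\alpha_0))$ points, affecting only the constant in $K=O_{\alpha_0,\mu}(r^3)$. You also correctly pinpoint the one genuine subtlety---the first-moment selection must guarantee $|\cneigh_{G'}(v_1,v_2)|\geq(1-O(1/r))\alpha^2|V^{up}_{G'}|$, not merely $(1-\mu)\alpha^2|V^{up}_{G'}|$, or else the falling-factorial denominator loses an unacceptable factor $(1-\mu)^{-r}$---which the paper handles by taking the threshold $\tilde\delta:=\mu/r$ from the outset, a slightly cleaner form of your case split between $r<1/\mu$ and $r\geq 1/\mu$.
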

\begin{proof}
Set
$$
\tilde\delta:=\frac{\mu}{r}.
$$
Let $X_1,X_2$ be i.i.d.\ uniform random elements of $V^{down}_{G'}$, and set $A:=\cneigh_{G'}(X_1,X_2)$.
According to Lemma~\ref{akejnfpeifjnpfijnqf}, we have
$$
\Exp\,|A|\geq \alpha^2\,|V^{up}_{G'}|,
$$
whereas at the same time clearly $|A|\leq |V^{up}_{G'}|$ deterministically.
Thus,
$$
|V^{up}_{G'}|\,\Prob\big\{|A|\geq (1-\tilde\delta)\alpha^2\,|V^{up}_{G'}|\big\}
+(1-\tilde\delta)\alpha^2\,|V^{up}_{G'}|\,\big(1-\Prob\big\{|A|\geq (1-\tilde\delta)\alpha^2\,|V^{up}_{G'}|\big\}\big)
\geq \alpha^2\,|V^{up}_{G'}|,
$$
implying
$$
\Prob\big\{|A|\geq (1-\tilde\delta)\alpha^2\,|V^{up}_{G'}|\big\}
\geq \tilde\delta\alpha^2.
$$
Further, let $\beta_\ell:=\alpha\big(1-\frac{\alpha-\alpha_0}{\alpha}\frac{\ell}{r}\big)$, $\ell=0,\dots,r-1$, and let 
$L:=2\big(\tilde\delta\alpha^2\big)^{-1}$.
Denote by $\Event$ the event that
for every $\ell=0,\dots,r-1$
the number of ordered $r$--tuples of
elements of $A$ with at most $\beta_\ell^r|V^{down}_{G'}|$ common neighbors, is at most
$$
Lr\beta_\ell^{2r}\,|V^{up}_{G'}|^r.
$$
Applying Lemma~\ref{akejnfpeifjnpfijnqf} together with Markov's inequality,
we get that the probability of the intersection of events
$\Event\cap \{|A|\geq (1-\tilde\delta)\alpha^2\,|V^{up}_{G'}|\}$ is at least $\tilde\delta\alpha^2-\frac{1}{L}>0$.

It remains to check that, conditioned on $\Event\cap \{|A|\geq (1-\tilde\delta)\alpha^2\,|V^{up}_{G'}|\}$,
the pair $(X_1,X_2)$ is $(\alpha_0,\alpha,\mu,r,C'\,Lr^2)$--standard
for some $C'=C'(\alpha_0)>0$.
For the rest of the proof, we fix a realization of $X_1,X_2$ from $\Event\cap \{|A|\geq (1-\tilde\delta)\alpha^2\,|V^{up}_{G'}|\}$.
Pick any $1\leq k\leq |A|$, $m\geq1$, and any finite collection
$(I_j)_{j=1}^m$ of subsets of $[k]$ satisfying $|I_j|= r$, $1\leq j\leq m$.
Further, let $(Y_i)_{i=1}^k$
be a random $k$--tuple of vertices in $A$ uniformly distributed
on the set of $k$--tuples of distinct vertices in $A$.
In view of our conditions on $X_1,X_2$, \eqref{afnafkjfajhbfgavfg}, and the definition of $\tilde\delta$,
we have for every $j\leq m$,
$$
\Prob\big\{|\cneigh_{G'}(Y_i,\,i\in I_j)|\leq \beta_\ell^r|V^{down}_{G'}|\big\}
\leq \frac{Lr\beta_\ell^{2r}\,|V^{up}_{G'}|^r}{|A|\cdot (|A|-1)\cdot\dots\cdot (|A|-r+1)}
\leq \tilde C Lr\beta_\ell^{2r}\,\alpha^{-2r}
$$
for some universal constant $\tilde C>0$,
whence
$$
\Exp\,\big|\big\{
1\leq j\leq m:\;|\cneigh_{G'}(Y_i,\,i\in I_j)|\leq \beta_\ell^r|V^{down}_{G'}|
\big\}\big|\leq \tilde C m\cdot Lr\beta_\ell^{2r}\,\alpha^{-2r},\quad 0\leq \ell\leq r-1.
$$
Applying Markov's inequality (this time with respect to the randomness of $Y_i$'s), we get
that with [conditional] probability at least $1/2$,
\begin{equation}\label{fajnfpeifunpfunfpiun}
\big|\big\{
1\leq j\leq m:\;|\cneigh_{G'}(Y_i,\,i\in I_j)|\leq \beta_\ell^r|V^{down}_{G'}|
\big\}\big|\leq m\cdot 2\tilde C Lr^2\beta_\ell^{2r}\,\alpha^{-2r},\quad 0\leq \ell\leq r-1.
\end{equation}
Finally, assuming that \eqref{fajnfpeifunpfunfpiun} holds,
take any $\beta\in[\alpha_0,\alpha]$, and let $\ell\in\{0,\dots,r-1\}$
be the largest index such that $\beta_\ell\geq\beta$.
Note that $\beta_\ell\leq \beta+\frac{\alpha-\alpha_0}{r}
\leq \big(1+\frac{\alpha}{\alpha_0}\frac{1}{r}\big)\beta$.
Then
\begin{align*}
\big|\big\{&
1\leq j\leq m:\;|\cneigh_{G'}(Y_i,\,i\in I_j)|\leq \beta^r|V^{down}_{G'}|
\big\}\big|\\
&\leq 
\big|\big\{
1\leq j\leq m:\;|\cneigh_{G'}(Y_i,\,i\in I_j)|\leq \beta_\ell^r|V^{down}_{G'}|
\big\}\big|\\
&\leq m\cdot 2\tilde C Lr^2\beta_\ell^{2r}\,\alpha^{-2r}\\
&\leq m\cdot C' Lr^2\beta^{2r}\,\alpha^{-2r},
\end{align*}
for some $C'=C'(\alpha_0)>0$.
The result follows.
\end{proof}

Next, we discuss the main notion of this section.
\begin{defi}
Let $G' = (V^{up}_{G'},V^{down}_{G'},E_{G'})$ be a bipartite graph,
and let $r\in\N$, $M>0$, $p\in[0,1]$ be parameters.
Further, let $(v_1,v_2)$ be an ordered pair of vertices in $V^{down}_{G'}$ with a non-empty
set of common neighbors.
We say that the collection
$\big\{\cneigh_{G'}(y_1,\dots,y_r):\,(y_1,\dots,y_r)\in \cneigh_{G'}(v_1,v_2)^r\big\}$
is {\bf $(p,M)$--condensed} if, letting $Y_1,\dots,Y_r$, $\tilde Y_1,\dots,\tilde Y_r$
be i.i.d.\ uniform random elements of $\cneigh_{G'}(v_1,v_2)$, we have
$$
\Prob\big\{\big|\cneigh_{G'}(Y_1,\dots,Y_r)\cap \cneigh_{G'}(\tilde Y_1,\dots,\tilde Y_r)\big|
\geq M\big\}\geq p.
$$
\end{defi}

\begin{Remark}
The above definition will be applied to standard pairs of vertices $(v_1,v_2)$,
i.e in the setting when a typical common neighborhood $\cneigh_{G'}(y_1,\dots,y_r)$ has size
of order at least $\Omega(K^{-1/2}\alpha^r |V^{down}_{G'}|)$ (for an appropriate choice of the parameter $K$), where $\alpha$ is the edge density of $G'$. 
For $M$ much less than $\alpha^r |V^{down}_{G'}|$ and for $p=o(1)$, the assertion that the collection
$\big\{\cneigh_{G'}(y_1,\dots,y_r):\,(y_1,\dots,y_r)\in \cneigh_{G'}(v_1,v_2)^r\big\}$ is {\it not} $(p,M)$--condensed implies
that the neighborhoods $\cneigh_{G'}(y_1,\dots,y_r)$
typically have small overlaps.
\end{Remark}

\begin{lemma}[Embedding into a graph comprising a non-condensed set of common neighborhoods]\label{fljknfefjwfpiwjff}
Let $G' = (V^{up}_{G'},V^{down}_{G'},E_{G'})$ be a bipartite graph
of density at least $\alpha\in(0,1]$, and assume that parameters $0<\alpha_0<\alpha$, $\mu\in(0,\alpha_0/2]$,
$r\geq 3$ and $m\in\N$
satisfy $\alpha^2\,|V^{up}_{G'}|\geq 2\max(m,2 \alpha^r|V^{down}_{G'}|)$,
$\alpha_0^r|V^{down}_{G'}|\leq 1$, and
\begin{equation}\label{ahgvjhgvajhcakhc}
\frac{1}{4}\frac{\alpha^{2r}|V^{down}_{G'}|}{\max(m,2 \alpha^r|V^{down}_{G'}|)
\cdot C_{\text{\tiny\ref{dfalnkfjnefwehfbqlwuh}}}\,r^3}
\geq \alpha_0^r,
\end{equation}
where $C_{\text{\tiny\ref{dfalnkfjnefwehfbqlwuh}}}=C_{\text{\tiny\ref{dfalnkfjnefwehfbqlwuh}}}(\alpha_0,\mu)$
is taken from Lemma~\ref{dfalnkfjnefwehfbqlwuh}.
Assume further that 
$(v_1,v_2)$ is an ordered pair of vertices in $V^{down}_{G'}$
which is $(\alpha_0,\alpha,\mu,r,C_{\text{\tiny\ref{dfalnkfjnefwehfbqlwuh}}}\, r^3)$--standard.
Assume that the collection
$\big\{\cneigh_{G'}(y_1,\dots,y_r):\,(y_1,\dots,y_r)\in \cneigh_{G'}(v_1,v_2)^r\big\}$
is {\bf not} $(p,M)$--condensed where the parameters $p\in[0,1]$ and $M>0$ satisfy
\begin{equation}\label{ahfcdquyevwucytqvq}
r^2\leq p\cdot \alpha^r|V^{down}_{G'}|,\quad\quad
\sqrt{p}\leq \frac{c_{\text{\tiny\ref{kjfaoifboqfuifhbluhb}}}\,\alpha^{2r}\,|V^{down}_{G'}|^2}
{16\cdot 3^7 C_{\text{\tiny\ref{dfalnkfjnefwehfbqlwuh}}}\, r^3 \max(m^2 ,4 \alpha^{2r}|V^{down}_{G'}|^2) 
\cdot 20\log |V^{down}_{G'}|}
\end{equation}
and
\begin{equation}\label{akhgdvcagcajgcvkvh}
1\leq M\leq \frac{c_{\text{\tiny\ref{kjfaoifboqfuifhbluhb}}}^2\,\alpha^{4r}|V^{down}_{G'}|^4}{2^9 C_{\text{\tiny\ref{dfalnkfjnefwehfbqlwuh}}}^2\,\max(m^3,8 \alpha^{3r}|V^{down}_{G'}|^3) \,r^6 
\cdot 400\log^2 |V^{down}_{G'}|},
\end{equation}
where the constant $c_{\text{\tiny\ref{kjfaoifboqfuifhbluhb}}}$ is taken from Lemma~\ref{kjfaoifboqfuifhbluhb}.
Further, let $H=(V^{up}_{H},V^{down}_{H},E_{H})$ be an $r$--regular bipartite graph on $m+m$ vertices. 
Then $H$ can be embedded into $G'$.
\end{lemma}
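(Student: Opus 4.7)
\medskip

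\textbf{Proof plan.} Let $S:=\cneigh_{G'}(v_1,v_2)$. By the first condition in the definition of a standard pair, $|S|\geq(1-\mu)\alpha^2|V^{up}_{G'}|$, and together with the hypothesis $\alpha^2|V^{up}_{G'}|\geq 2\max(m,2\alpha^r|V^{down}_{G'}|)$ this gives $|S|\geq\max(m,2\alpha^r|V^{down}_{G'}|)$. The overall strategy mirrors the dependent-random-choice embedding of Corollary~\ref{aljfhbefwhbfoufbowfub}: sample a uniform random injective map $f:V^{up}_H\to S$, set $R(v):=\cneigh_{G'}(f(\neigh_H(v)))\subseteq V^{down}_{G'}$ for every $v\in V^{down}_H$, and extend $f$ to an embedding of $H$ by producing an injective $g:V^{down}_H\to V^{down}_{G'}$ with $g(v)\in R(v)$. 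The new ingredient, compared to the proof of Corollary~\ref{aljfhbefwhbfoufbowfub}, is that the non-condensedness hypothesis will be used to control pairwise overlaps of the $R(v)$'s, so that Hall's theorem (rather than a pointwise lower bound on every $|R(v)|$) produces the injection $g$.

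\textbf{Step 1 (standardness yields a typical lower bound on $|R(v)|$).} Choose $\beta\in[\alpha_0,\alpha]$ that maximizes $\beta^r|V^{down}_{G'}|$ subject to the bad-fraction quantity $C_{\text{\tiny\ref{dfalnkfjnefwehfbqlwuh}}}\,r^3\beta^{2r}/\alpha^{2r}$ being small; condition \eqref{ahgvjhgvajhcakhc} is precisely what permits such a choice, and $\alpha_0^r|V^{down}_{G'}|\leq 1$ ensures that the interval $[\alpha_0,\alpha]$ contains a usable value of $\beta$. Apply the second standard-pair condition with $k=m$ and with $(I_j)_{j=1}^m$ enumerating $\{\neigh_H(v):v\in V^{down}_H\}$ after a fixed bijection $V^{up}_H\cong[m]$. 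This yields that, with probability at least $1/2$ over $f$, the number of $v\in V^{down}_H$ with $|R(v)|<\beta^r|V^{down}_{G'}|$ is at most $m\cdot C_{\text{\tiny\ref{dfalnkfjnefwehfbqlwuh}}}\,r^3\beta^{2r}/\alpha^{2r}$, an $o(1)$-fraction under the parameter choice.

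\textbf{Step 2 (non-condensedness yields small pairwise overlaps).} For distinct $v,v'\in V^{down}_H$ with disjoint $H$-neighborhoods, the pair $(f(\neigh_H(v)),f(\neigh_H(v')))$ differs in total variation from two i.i.d.\ uniform $r$-tuples in $S^r$ by $O(r^2/|S|)$, so the non-condensedness assumption gives $\Prob\{|R(v)\cap R(v')|\geq M\}\leq p+O(r^2/|S|)$. The $O(mr^2)$ pairs that share an $H$-neighbor contribute only lower-order terms. A bounded-differences concentration inequality (Azuma/McDiarmid applied to $f$ as a random permutation, with swap-Lipschitz constant $O(r)$ per relevant functional) then promotes these pointwise bounds to a high-probability bound on $\sum_{v\neq v'}|R(v)\cap R(v')|$; the $\log|V^{down}_{G'}|$ and $\log^2|V^{down}_{G'}|$ factors in \eqref{ahfcdquyevwucytqvq}--\eqref{akhgdvcagcajgcvkvh} supply the tail margin required by the Chernoff-type estimate.

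\textbf{Step 3 (Hall's condition via Cauchy-Schwarz).} Fix a realization of $f$ in the positive-probability intersection of the good events from Steps~1--2, and set aside the $o(m)$ bad $v$'s with $|R(v)|<\beta^r|V^{down}_{G'}|$ to be embedded last. For any subset $A\subseteq V^{down}_H$ of good vertices, Cauchy-Schwarz applied to $d_A(w):=|\{v\in A:w\in R(v)\}|$ yields $|\cup_{v\in A}R(v)|\geq (\sum_{v\in A}|R(v)|)^2/(\sum_{v\in A}|R(v)|+\sum_{v\neq v'\in A}|R(v)\cap R(v')|)$, and the bounds from Steps~1--2 make the right-hand side $\geq|A|$. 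Thus Hall's condition holds on the good vertices and we obtain the required injection; the few bad $v$'s are then placed into the leftover part of $V^{down}_{G'}$ using the slack. The principal obstacle is Step~2: converting the per-pair expectation bound supplied by non-condensedness into a concentrated bound on $\sum_{v\neq v'}|R(v)\cap R(v')|$ strong enough to verify Hall's condition in Step~3, via a concentration tail whose strength exactly matches the parameter choices in \eqref{ahfcdquyevwucytqvq}--\eqref{akhgdvcagcajgcvkvh}.
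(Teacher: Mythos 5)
Your proposal is a genuinely different route from the paper, and it has a real gap in Step~2 that the paper is carefully engineered to avoid.

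The paper does not attempt to concentrate the total overlap $\sum_{v\neq v'}|R(v)\cap R(v')|$. Instead, after fixing the random map $f$ (which the paper constructs by first-moment/Markov rather than by concentration), it only tracks the \emph{number} of pairs $(j_1,j_2)$ with $|R(v)\cap R(v')|\geq M$ — which a straight application of Markov's inequality to the non-condensedness hypothesis bounds by $3^7 p\, m^2$ — and then partitions $V^{down}_H$ into three classes: vertices with small $|R(v)|$ (the set $Q$), vertices with $|R(v)|$ large but with \emph{many} large-overlap partners (the set $W$), and the remainder $R$. The sets $Q$ and $W$ are embedded greedily, and the set $R$ is embedded by an independent randomized assignment (each vertex samples $h\approx\log|V^{down}_{G'}|$ candidates from $R(v)\setminus f(Q\cup W)$) combined with a Chernoff bound. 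No Hall-type argument appears.

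The problem with your Step~2 is concrete. You claim a bounded-differences inequality with ``swap-Lipschitz constant $O(r)$ per relevant functional.'' But the functional you need to concentrate is $\sum_{v\neq v'}|R(v)\cap R(v')|$: swapping one coordinate of $f$ changes $R(v)$ for up to $r$ values of $v\in V^{down}_H$, and each such change can alter $|R(v)\cap R(v')|$ by as much as $|V^{down}_{G'}|$ for each of the $m-1$ partners $v'$. The resulting Lipschitz constant is $O(rm|V^{down}_{G'}|)$, which makes Azuma/McDiarmid vacuous at the scales involved. Compounding this, the non-condensedness hypothesis only controls the probability that a single pairwise overlap exceeds $M$; it says nothing about the size of the overlap on the bad event, which can be as large as $|V^{down}_{G'}|$. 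So even the expectation $\Exp\sum_{v\neq v'}|R(v)\cap R(v')|$ is not controlled by $m^2M$ alone — you pick up a term of order $p\,m^2|V^{down}_{G'}|$ from the bad pairs, and your Step~3 would need a mechanism analogous to the paper's set $W$ to quarantine vertices that participate in many such pairs. Without that, the Cauchy--Schwarz denominator in your Hall bound can be dominated by a small number of huge overlaps, and the defect-Hall argument does not close. In short: the Hall-via-Cauchy--Schwarz plan is an interesting alternative in spirit, but it requires (i) abandoning the McDiarmid step in favor of a first-moment/Markov selection of $f$ as the paper does, (ii) a pointwise truncation or quarantine of the bad-pair contributions (the paper's $W$), and (iii) a separate treatment of the quarantined vertices — at which point it largely reconverges to the paper's argument.
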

\begin{Remark}
Observe that the lemma does not require any structural assumptions on the graph $H$ except for the
regularity and bounds on the size of the vertex set.
Moreover, one may consider versions of this lemma which operate under the only assumptions
on the vertex set cardinality and the maximum degree, without the regularity requirement.
We omit the discussion of such generalizations in order not to complicate the exposition further.
\end{Remark}

Before providing a proof of the lemma, let us discuss our construction of the graph embedding
rather informally. Let $(v_1,v_2)$ be a standard pair of vertices in $V^{down}_{G'}$ satisfying the above assumptions.
For convenience, we label vertices of $H$ as $t_1^{up},\dots,t_m^{up}$, $t_1^{down},\dots,t_m^{down}$,
and let $I_j$, $1\leq j\leq m$,
be $r$--subsets of $[m]$ corresponding to neighbors of vertices in $V^{down}_{H}$, i.e.\
$\neigh_H(t_j^{down})=\{t_i^{up},\,i\in I_j\}$ for every $j\leq m$.
The embedding of $H$ is accomplished by mapping the vertices $V^{up}_{H}$
into $V^{up}_{G'}$ using the first moment argument,
and then embedding $V^{down}_{H}$ into $V^{down}_{G'}$ one
vertex at a time via a combination of deterministic and probabilistic reasoning.

The first moment method is applied to produce an $m$--tuple of distinct vertices $(y_1,y_2,\dots,y_m)$
in $\cneigh_{G'}(v_1,v_2)$, such that
\begin{itemize}
\item The number of indices $j\leq m$ with $|\cneigh_{G'}(y_i,\,i\in I_j)|$
much smaller than $\alpha^r|V^{down}_{G'}|$, is much smaller than $m$, and
\item The pairwise intersections $\cneigh_{G'}(y_i,\,i\in I_{j_1})\cap \cneigh_{G'}(y_i,\,i\in I_{j_2})$
have size less than $M$ for
a vast majority of pairs of indices $(j_1,j_2)\in[m]^2$.
\end{itemize}
We refer to the proof below for quantitative bounds.
The vertex subset $V^{up}_{H}$ is then mapped to $y_1,y_2,\dots,y_m$.
This provides a satisfactory starting point for the embedding
since, by the above conditions, the common neighborhoods of $\{y_i,\,i\in I_j\}$, $j\leq m$,
tend to have relatively large cardinalities and small pairwise overlaps.
At this stage, the goal is to map $V^{down}_{H}$ into a collection of distinct
vertices $f(t_1^{down}),\dots,f(t_m^{down})$ from $V^{down}_{G'}$ such that $f(t_j^{down})$
is contained in the common neighborhood of $\{y_i,\,i\in I_j\}$, for every $j\leq m$.
We split the index set $[m]$ into three subsets $Q,W,R$
according to properties of the common neighborhoods $\cneigh_{G'}(y_i,\,i\in I_j)$
as follows:
\begin{itemize}
\item $Q$ is a set of indices $j$ such 
that $|\cneigh_{G'}(y_i,\,i\in I_j)|$ is very small;
\item $W$ is the subset of all $j\in [m]\setminus Q$ such that
the common neighborhood $\cneigh_{G'}(y_i,\,i\in I_j)$
has large overlaps with many other sets $\cneigh_{G'}(y_i,\,i\in I_{\tilde j})$,
$\tilde j\neq j$;
\item $R$ is the complement of $Q$ and $W$ in $[m]$, the set of ``regular'' indices.
\end{itemize}
Both $Q$ and $W$ are small (and possibly empty), and $t_j^{down}$, $j\in Q\cup W$,
are embedded in the first place using a deterministic argument.
Strong upper bounds on sizes of $Q$ and $W$ guarantee that the embedding does not fail at this point.
The embedding of $t_j^{down}$, $j\in R$, is then accomplished via a randomized construction.

We now turn to the rigorous argument.

\begin{proof}[Proof of Lemma~\ref{fljknfefjwfpiwjff}]
For better readability, we split the proof into blocks.

\medskip

{\bf An assumption on $m$.}
We claim that without loss of generality we can assume that the parameter $m$ satisfies
\begin{equation}\label{iyevfduqtfdcuqfc}
m\geq \alpha^r|V^{down}_{G'}|.
\end{equation}
Indeed, suppose that the lemma is proved under the extra assumption \eqref{iyevfduqtfdcuqfc}. Let $\tilde m< \alpha^r|V^{down}_{G'}|$ be any positive integer, let $\tilde H$
be a bipartite $r$--regular graph on $\tilde m+\tilde m$ vertices, and suppose that the parameters $\alpha,\alpha_0,\mu,r,p,M$ and an ordered pair $(v_1,v_2)$
all satisfy the assumptions of the lemma, with $m$ replaced with $\tilde m$ in \eqref{ahgvjhgvajhcakhc},~\eqref{ahfcdquyevwucytqvq}, and~\eqref{akhgdvcagcajgcvkvh}.
We want to show that $\tilde H$ can be embedded into $G'$.
Define $m$ as the smallest integer multiple of $\tilde m$ satisfying \eqref{iyevfduqtfdcuqfc} and observe that $m\leq 2\alpha^r|V^{down}_{G'}|$ implying that
$\max(m,2 \alpha^r|V^{down}_{G'}|)=\max(\tilde m,2 \alpha^r|V^{down}_{G'}|)=2 \alpha^r|V^{down}_{G'}|$ and hence $m$ satisfies
\eqref{ahgvjhgvajhcakhc},~\eqref{ahfcdquyevwucytqvq}, and~\eqref{akhgdvcagcajgcvkvh}. Define a bipartite graph $H$ on $m+m$ vertices as the disjoint union of $m/\tilde m$ copies of the graph $\tilde H$.
By our assumption, $H$ can be embedded into $G'$, and therefore the same is true for $\tilde H$. This proves the claim.
For the rest of the proof, the parameter $m$ is supposed to satisfy \eqref{iyevfduqtfdcuqfc}.

\medskip

{\bf Choosing an $m$--tuple of vertices in $V^{up}_{G'}$.}
Let $(t_1^{up},\dots,t_m^{up})$ and
$(t_1^{down},\dots,t_m^{down})$ be the vertices of $H$ from $V^{up}_{H}$ and $V^{down}_{H}$,
respectively, 
ordered arbitrarily.
As in the proof outline above, for every $1\leq j\leq m$, we let $I_j$ be the collection of indices in $[m]$
such that $\neigh_H(t_j^{down})=\{t_i^{up},\,i\in I_j\}$
(we observe that, in view of $r$--regularity of $H$,
the number of all ordered pairs of indices $(j_1,j_2)\in[m]^2$ such that $I_{j_1}
\cap I_{j_2}=\emptyset$, is at least 
$m\cdot (m-r^2)$).

Let $(Y_i)_{i=1}^m$
be a random $m$--tuple of vertices in $\cneigh_{G'}(v_1,v_2)$ uniformly distributed
on the set of $m$--tuples of distinct vertices in $\cneigh_{G'}(v_1,v_2)$ (the condition
that $(v_1,v_2)$ is standard and our assumption on $|V^{up}_{G'}|$ imply that $|\cneigh_{G'}(v_1,v_2)|\geq m$ so that
$(Y_i)_{i=1}^m$ are well defined).
In view of the definition of $(p,M)$--condensation combined with the last observation, we have
$$
\Exp\,\Big|\Big\{(j_1,j_2)\in[m]^2:\;
\big|\cneigh_{G'}(Y_i,\,i\in I_{j_1})\cap \cneigh_{G'}(Y_i,\,i\in I_{j_2})\big| \geq M
\Big\}\Big|
\leq m\cdot r^2+\frac{p\cdot m^2}{\varrho},
$$
where $\varrho\in(0,1)$ is the probability that $2r$ i.i.d.\ uniform random elements of $\cneigh_{G'}(v_1,v_2)$ are all distinct.
Since $|\cneigh_{G'}(v_1,v_2)|\geq m\geq r^2$ in view of the first inequality in \eqref{ahfcdquyevwucytqvq} and \eqref{iyevfduqtfdcuqfc},
we have $m\cdot r^2\leq p\cdot m^2$ and
$$
\varrho\geq \bigg(1-\frac{2}{r}\bigg)^{2r}\geq 3^{-6}.
$$
Markov's inequality and the definition of a standard vertex pair then imply
that with a positive probability the collection $(Y_i)_{i=1}^m$ satisfies all of the following:
\begin{itemize}
\item[(a)]
$
\big|\big\{
1\leq j\leq m:\;|\cneigh_{G'}(Y_i,\,i\in I_j)|\leq \beta^r|V^{down}_{G'}|
\big\}\big|\leq m\cdot C_{\text{\tiny\ref{dfalnkfjnefwehfbqlwuh}}}\,r^3\beta^{2r}\,\alpha^{-2r}\; \mbox{for every }\beta\in [\alpha_0,\alpha];
$
\item[(b)] $\big|\cneigh_{G'}(Y_i,\,i\in I_{j_1})\cap \cneigh_{G'}(Y_i,\,i\in I_{j_2})\big| \geq M$ for
at most $3^7 p\cdot m^2$ pairs of indices $(j_1,j_2)\in[m]^2$.
\end{itemize}
For the rest of the proof, we fix a realization $(y_1,\dots,y_m)$ of $(Y_i)_{i=1}^m$ satisfying the above
conditions.
We shall construct an embedding $f$ of $H$ into $G$ which maps each
$t_i^{up}$ into $y_i$, $1\leq i\leq m$.

\medskip

{\bf Partitioning the set of indices $[m]$.}
Define $\beta_0$ via the relation
$$
\beta_0^{r}:=\frac{1}{4}\frac{\alpha^{2r}|V^{down}_{G'}|}{m\cdot C_{\text{\tiny\ref{dfalnkfjnefwehfbqlwuh}}}\,r^3},
$$
and observe that in view of
\eqref{ahgvjhgvajhcakhc} and \eqref{iyevfduqtfdcuqfc}, $\alpha_0\leq\beta_0\leq\alpha$, and hence, by the condition (a),
$$
\big|\big\{
1\leq j\leq m:\;|\cneigh_{G'}(y_i,\,i\in I_j)|\leq \beta_0^r|V^{down}_{G'}|
\big\}\big|\leq m\cdot C_{\text{\tiny\ref{dfalnkfjnefwehfbqlwuh}}}\,r^3\beta_0^{2r}\,\alpha^{-2r}.
$$
Let $Q$ be the set of all indices $1\leq j\leq m$ with
$|\cneigh_{G'}(y_i,\,i\in I_j)|\leq \beta_0^r|V^{down}_{G'}|$,
so that
\begin{equation}\label{fakjenfoifufiqjfqf}
|Q|\leq 
m\cdot C_{\text{\tiny\ref{dfalnkfjnefwehfbqlwuh}}}\,
r^3\beta_0^{2r}\,\alpha^{-2r}
=
\frac{1}{16}\frac{\alpha^{2r} |V^{down}_{G'}|^2}{m\cdot C_{\text{\tiny\ref{dfalnkfjnefwehfbqlwuh}}}\,r^3}
\leq \alpha^r\,|V^{down}_{G'}|,
\end{equation}
where the last inequality follows from \eqref{iyevfduqtfdcuqfc}.
Assume that
$(q_s)_{s=1}^{|Q|}$ is an ordering of $Q$ such that
$$
|\cneigh_{G'}(y_i,\,i\in I_{q_s})|\leq |\cneigh_{G'}(y_i,\,i\in I_{q_{s+1}})|,\quad 1\leq s<|Q|.
$$
Further, let $W$ be the set of all indices $1\leq j\leq m$ such that
\begin{itemize}
\item $|\cneigh_{G'}(y_i,\,i\in I_j)|> \beta_0^r|V^{down}_{G'}|$ and
\item $\big|\cneigh_{G'}(y_i,\,i\in I_{j})\cap \cneigh_{G'}(y_i,\,i\in I_{\tilde j})\big| \geq M$
for at least $\sqrt{p}\,m$ indices $\tilde j\in[m]$
\end{itemize}
(note that in view of the condition (b) above, $|W|\leq 3^7\sqrt{p}\,m$),
and let $(w_s)_{s=1}^{|W|}$ be an arbitrary ordering of the vertices from $W$.
Finally, we let $R:=[m]\setminus (Q\cup W)$, i.e $R$ is the set of indices 
$1\leq j\leq m$ such that
\begin{itemize}
\item $|\cneigh_{G'}(y_i,\,i\in I_j)|> \beta_0^r|V^{down}_{G'}|$ and
\item $\big|\cneigh_{G'}(y_i,\,i\in I_{j})\cap \cneigh_{G'}(y_i,\,i\in I_{\tilde j})\big| \geq M$
for less than $\sqrt{p}\,m$ indices $\tilde j\in[m]$.
\end{itemize}
Similarly, let $(r_s)_{s=1}^{|R|}$
be an arbitrary ordering of the vertices in $R$.

\medskip

{\bf A deterministic embedding of $t_j^{down}$, $j\in Q\cup W$.}
We define $f(t_j^{down})$, $j\in Q\cup W$ via a simple iterative procedure comprised of $|Q|+|W|$ steps.
A $s$--th step,
\begin{itemize}
\item If $1\leq s\leq |Q|$ then we let $f(t_{q_s}^{down})$ to be any point
in 
$$\cneigh_{G'}(y_i,\,i\in I_{q_s})\setminus\{f(t_{q_1}^{down}),\dots,f(t_{q_{s-1}}^{down})\};$$
\item If $|Q|+1\leq s\leq |Q|+|W|$ then we define $f(t_{w_{s-|Q|}}^{down})$
as an arbitrary point in
$$
\cneigh_{G'}(y_i,\,i\in I_{w_{s-|Q|}})\setminus\{f(t_{q_1}^{down}),\dots,f(t_{q_{|Q|}}^{down});
f(t_{w_1}^{down}),\dots,f(t_{w_{s-|Q|-1}}^{down})\}.
$$
\end{itemize}
To make sure that the above process does not fail, 
we need to verify that at each step $s$, $1\leq s\leq |Q|$, the set
$\cneigh_{G'}(y_i,\,i\in I_{q_s})\setminus\{f(t_{q_1}^{down}),\dots,f(t_{q_{s-1}}^{down})\}$
is necessarily non-empty, and similarly, $\cneigh_{G'}(y_i,\,i\in I_{w_{s-|Q|}})\setminus\{f(t_{q_1}^{down}),\dots,f(t_{q_{|Q|}}^{down});
f(t_{w_1}^{down}),\dots,f(t_{w_{s-|Q|-1}}^{down})\}\neq \emptyset$
for every $|Q|+1\leq s\leq |Q|+|W|$, regardless of the specific choices for $f(t_j^{down})$ at previous steps.

Observe that
the condition
$$
\cneigh_{G'}(y_i,\,i\in I_{q_s})\setminus\{f(t_{q_1}^{down}),\dots,f(t_{q_{s-1}}^{down})\}=\emptyset
\quad\mbox{for some $1\leq s\leq |Q|$},$$
together with our choice of the ordering $(q_s)_{s=1}^{|Q|}$, would imply that 
\begin{equation}\label{akjbefoiuyro3ufbljsefla}
|\cneigh_{G'}(y_i,\,i\in I_j)|\leq s\quad\mbox{for at least $s$ indices $j\in[m]$.}
\end{equation}
For $s<\alpha_0^r|V^{down}_{G'}|$, this would imply that the set
$\big\{
1\leq j\leq m:\;|\cneigh_{G'}(Y_i,\,i\in I_j)|\leq \alpha_0^r|V^{down}_{G'}|
\big\}$ is non-empty which would contradict the condition (a) and the inequality
$$
m\cdot C_{\text{\tiny\ref{dfalnkfjnefwehfbqlwuh}}}\,r^3\alpha_0^{2r}\,\alpha^{-2r}
\leq \frac{m\cdot C_{\text{\tiny\ref{dfalnkfjnefwehfbqlwuh}}}\,r^3\alpha_0^{r}}{\alpha^{2r}|V^{down}_{G'}|}<1,
$$
which follows from \eqref{ahgvjhgvajhcakhc} and the assumption $\alpha_0^r|V^{down}_{G'}|\leq 1$.
On the other hand, for $\alpha_0^r|V^{down}_{G'}|\leq s\leq |Q|$, \eqref{akjbefoiuyro3ufbljsefla}
combined with condition (a) and the cardinality estimate for $Q$ implies
$s\leq m\cdot C_{\text{\tiny\ref{dfalnkfjnefwehfbqlwuh}}}\,
r^3\,\frac{s^2}{|V^{down}_{G'}|^2}\,\alpha^{-2r}$, which, 
together with \eqref{fakjenfoifufiqjfqf}, yields
$$
m\cdot C_{\text{\tiny\ref{dfalnkfjnefwehfbqlwuh}}}\,r^3\beta_0^{2r}\,\alpha^{-2r}\geq
|Q|\geq \frac{\alpha^{2r} |V^{down}_{G'}|^2}{m\cdot C_{\text{\tiny\ref{dfalnkfjnefwehfbqlwuh}}}\,r^3},
$$
again leading to contradiction in view of the definition of $\beta_0$. 
Thus, the process defined above cannot fail at any step $1\leq s\leq |Q|$.

To verify that our embedding process does not fail at steps $s\in[|Q|+1,\dots,|Q|+|W|]$,
we note that, in view of the definition of $\beta_0$ and the second inequality in \eqref{ahfcdquyevwucytqvq},
\begin{equation}\label{aklfnfpiejnfpwifjnwp}
\beta_0^r|V^{down}_{G'}|\geq 2m\cdot C_{\text{\tiny\ref{dfalnkfjnefwehfbqlwuh}}}\,
r^3\beta_0^{2r}\,\alpha^{-2r}+2\cdot 3^7\sqrt{p}\,m\geq 2|Q|+2|W|.
\end{equation}

\medskip

{\bf A randomized embedding of vertices $t_j^{down}$, $j\in R$.}
To complete construction of our embedding $f$, it remains to define $f(t_{r_s})$, $s=1,\dots,|R|$.
Set
$$
h:=\Big\lceil\frac{10}{c_{\text{\tiny\ref{kjfaoifboqfuifhbluhb}}}}\,\log |V^{down}_{G'}|\Big\rceil.
$$

For every $j\in R$, let $Z_{j1},\dots,Z_{jh}$ be uniform random vertices in
$$
\cneigh_{G'}(y_i,\,i\in I_j)\setminus f(Q\cup W)
$$
(note that in view of \eqref{aklfnfpiejnfpwifjnwp}, the set difference is non-empty and, moreover,
$|\cneigh_{G'}(y_i,\,i\in I_j)\setminus f(Q\cup W)|\geq \frac{1}{2}|\cneigh_{G'}(y_i,\,i\in I_j)|$),
and assume that $Z_{j1},\dots,Z_{jh}$, $j\in R$, are mutually independent.
We then define $f(t_{r_s})$, $s=1,\dots,|R|$, as any $|R|$--tuple
of distinct vertices in $V^{down}_{G'}$
satisfying $f(t_{r_s})\in \{Z_{r_s 1},\dots,Z_{r_s h}\}$ for each $s=1,\dots,|R|$,
whenever such vertex assignment is possible, and declare failure otherwise.
To complete the proof, we must verify that this vertex assignment succeeds with a positive probability.
Note that a sufficient condition of success is
\begin{equation}\label{flaejhfbqwofuhbfljafla}
\{Z_{r_s 1},\dots,Z_{r_s h}\}\setminus \bigcup_{\tilde s=1}^{s-1}
\{Z_{r_{\tilde s} 1},\dots,Z_{r_{\tilde s} h}\}\neq\emptyset,\quad s=1,\dots,|R|.
\end{equation}
Pick any $s\in\{1,\dots,|R|\}$, and let $L_s$ be the collection of all indices $\tilde s\in\{1,\dots,s-1\}$
such that $\big|\cneigh_{G'}(y_i,\,i\in I_{r_s})\cap \cneigh_{G'}(y_i,\,i\in I_{r_{\tilde s}})\big| \geq M$.
In view of the definition of $R$ as the complement of $Q\cup W$, we have
$|L_s|<\sqrt{p}\,m$.
For every $\tilde s\in \{1,\dots,s-1\}\setminus L_s$, let $b_{\tilde s}$ be the indicator of the
event
$$
\big\{Z_{r_{\tilde s} 1},\dots,Z_{r_{\tilde s} h}\big\}\cap \cneigh_{G'}(y_i,\,i\in I_{r_s})\neq\emptyset.
$$
We have, in view of \eqref{aklfnfpiejnfpwifjnwp},
$$
\Prob\{b_{\tilde s}=1\}
\leq h\cdot \frac{2M}{\beta_0^r|V^{down}_{G'}|},
$$
whence, by Chernoff's inequality (Lemma~\ref{kjfaoifboqfuifhbluhb}),
$$
\Prob\bigg\{\sum_{\tilde s\in \{1,\dots,s-1\}\setminus L_s}b_{\tilde s}\geq 
\frac{4hm\cdot M}{\beta_0^r|V^{down}_{G'}|}
\bigg\}\leq 2\exp\bigg(-\frac{2c_{\text{\tiny\ref{kjfaoifboqfuifhbluhb}}} hm\cdot M}{\beta_0^r|V^{down}_{G'}|}\bigg).
$$
We conclude that with probability at least
$1-2\exp\big(-\frac{2c_{\text{\tiny\ref{kjfaoifboqfuifhbluhb}}} hm\cdot M}{\beta_0^r|V^{down}_{G'}|}\big)$,
we have
$$
\bigg|\cneigh_{G'}(y_i,\,i\in I_{r_s})\cap \bigcup_{\tilde s=1}^{s-1}
\{Z_{r_{\tilde s} 1},\dots,Z_{r_{\tilde s} h}\}\bigg|
\leq \frac{4h^2m\cdot M}{\beta_0^r|V^{down}_{G'}|}+h\sqrt{p}\,m\leq\frac{1}{4}\beta_0^r|V^{down}_{G'}|,
$$
where the last inequality follows as a combination of \eqref{ahfcdquyevwucytqvq} and \eqref{akhgdvcagcajgcvkvh}.
On the other hand, conditioned on the last estimate, we have that
$$
\{Z_{r_s 1},\dots,Z_{r_s h}\}\setminus \bigcup_{\tilde s=1}^{s-1}
\{Z_{r_{\tilde s} 1},\dots,Z_{r_{\tilde s} h}\}\neq\emptyset
$$
with [conditional] probability at least $1-(3/4)^{h}$.
Thus, \eqref{flaejhfbqwofuhbfljafla} holds with probability at least
$$
1-m\cdot\bigg((3/4)^{h}+
2\exp\bigg(-\frac{2c_{\text{\tiny\ref{kjfaoifboqfuifhbluhb}}}hm\cdot M}{\beta_0^r|V^{down}_{G'}|}\bigg)\bigg)>0,
$$
where in the last inequality we used the definition of $h$, \eqref{akhgdvcagcajgcvkvh}, and \eqref{iyevfduqtfdcuqfc}.
The proof is complete.
\end{proof}

\section{Embedding into block-structured graphs}\label{s:blockstructured}

In this section, we consider a special class of bipartite graphs
whose structure is similar to the graph $\Gamma$ from the introduction.
  
\begin{defi}
Let $G' = (V^{up}_{G'},V^{down}_{G'},E_{G'})$ be a bipartite graph.
We say that $G'$ is {\bf block-structured with parameters $(\delta,\gamma,k,g)$}
if there is a partition
$$
V^{down}_{G'}=\bigsqcup_{\ell=1}^k S^{down}_\ell
$$
of $V^{down}_{G'}$ into non-empty sets, and a collection of non-empty
subsets $S^{up}_\ell\subset V^{up}_{G'}$,
$\ell=1,\dots,k$, having the following properties:
\begin{itemize}
\item For each $\ell=1,\dots,k$, $|S^{down}_\ell|= g$;
\item For each $\ell=1,\dots,k$, $|S^{up}_\ell|\geq \gamma |V^{up}_{G'}|$;
\item For each $\ell=1,\dots,k$, vertices in $S^{down}_\ell$ are {\bf not} adjacent to any of the
vertices in $V^{up}_{G'}\setminus S^{up}_\ell$;
\item For each $\ell=1,\dots,k$, the density of the bipartite subgraph of $G'$ induced by the vertex subset
$S^{up}_\ell\sqcup S^{down}_\ell$, is at least $1-\delta$.
\end{itemize}
We will further say that collections of subsets $\big(S^{up}_\ell\big)_{\ell=1}^k$,
$\big(S^{down}_\ell\big)_{\ell=1}^k$ satisfying the above properties
are {\bf compatible} with the block-structured graph $G'$.
Note that the compatible collections may not be uniquely defined;
in what follows
for every block-structured bipartite graph (with given parameters)
we arbitrarily fix a pair of compatible collections $\big(S^{up}_\ell\big)_{\ell=1}^k$,
$\big(S^{down}_\ell\big)_{\ell=1}^k$, and refer to them as {\bf the} compatible subsets.
\end{defi}

\begin{Remark}
Note that from the above definition it follows that $|V^{down}_{G'}|=k\,g$.
\end{Remark}

\begin{defi}
Let $r,w,u\geq 1$, let $G'= (V^{up}_{G'},V^{down}_{G'},E_{G'})$
be a block-structured bipartite graph (with some parameters $(\delta,\gamma,k,g)$), and let
$\big(S^{up}_\ell\big)_{\ell=1}^k$,
$\big(S^{down}_\ell\big)_{\ell=1}^k$ be the corresponding
compatible sequences of subsets of vertices of $G'$.
Further, let $y_1,\dots,y_u\in V^{up}_{G'}$ (some of the vertices may repeat).
We define
$$\mathcal M_{G'}(r,w;y_1,\dots,y_u)$$
as the collection of all ordered $r$--tuples $(x_1,\dots,x_r)$ of elements of $V^{down}_{G'}$
satisfying the following conditions:
\begin{itemize}
\item The vertices $x_1,\dots,x_r$ are distinct;
\item There are distinct indices $1\leq \ell_0,\ell_1,\dots,\ell_w\leq k$ such that
$x_1,\dots,x_{r-w}\in S^{down}_{\ell_0}$ and for every $1\leq a\leq w$, $x_{r-w+a}\in S^{down}_{\ell_a}$;
\item $\{x_1,\dots,x_r\}\subset\cneigh_{G'}(y_1,\dots,y_u)$.
\end{itemize}
Note that in the notation for $\mathcal M_{G'}(\dots)$ we omit the parameters $\delta,\gamma,k,g$
for brevity.
\end{defi}

\begin{lemma}[Dependent random choice for block-structured graphs]\label{adkjnfpiffiqfijfakdfk}
Let $G'= (V^{up}_{G'},V^{down}_{G'},E_{G'})$ be a block-structured bipartite graph
with parameters $(\delta,\gamma,k,g)$, and let $r,w,u\geq 1$ with $g\geq r-w\geq 2$ and $k\geq w+1$.
Let $\big(S^{up}_\ell\big)_{\ell=1}^k$,
$\big(S^{down}_\ell\big)_{\ell=1}^k$ be the corresponding
compatible sequences of subsets of vertices of $G'$.
Further, assume that $Y_1,\dots,Y_u$ are i.i.d.\ uniform random elements of $V^{up}_{G'}$.
Then
\begin{align*}
\Prob\big\{&\big|\cneigh_{G'}(Y_1,\dots,Y_u)\cap S^{down}_{\ell}\big|\geq 
g\,(1-\delta)^{u+1}\\
&\mbox{ for at least $k\cdot \frac{\delta}{2}\big(\gamma(1-\delta)\big)^u$
indices $\ell$}\big\}\geq \frac{\delta}{2}\big(\gamma(1-\delta)\big)^u,
\end{align*}
and for any $s>0$,
the expected number of $r$--tuples $(x_1,\dots,x_r)\in \mathcal M_{G'}(r,w;Y_1,\dots,Y_u)$
with $|\cneigh_{G'}(x_1,\dots,x_r)|\leq s$ is bounded above by
$$
\bigg(\frac{s}{|V^{up}_{G'}|}\bigg)^u\,\frac{k!}{(k-w-1)!}\frac{g^w\,g!}{(g-r+w)!}.
$$
\end{lemma}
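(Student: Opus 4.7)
The proof splits naturally into two parts, mirroring the two conclusions, and the entire argument is first-moment / reverse-Markov in spirit.

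For the probability bound, the key structural fact is that, by the block-structure, any $v\in S^{down}_\ell$ has no neighbors outside $S^{up}_\ell$; consequently, the quantity $N_\ell:=|\cneigh_{G'}(Y_1,\dots,Y_u)\cap S^{down}_\ell|$ is automatically zero whenever some $Y_i\notin S^{up}_\ell$. I would therefore condition on the event $\{Y_i\in S^{up}_\ell\mbox{ for all }i\}$, which has probability $(|S^{up}_\ell|/|V^{up}_{G'}|)^u\geq\gamma^u$. Under this conditioning each $Y_i$ is uniform on $S^{up}_\ell$, and a Jensen-type computation identical to the one used in Lemma~\ref{akejnfpeifjnpfijnqf}, applied inside the bipartite subgraph induced on $S^{up}_\ell\sqcup S^{down}_\ell$ (whose density is $\geq 1-\delta$), gives
$$
\Exp\big[N_\ell\,\big|\,Y_i\in S^{up}_\ell\mbox{ for all }i\big]
=\sum_{v\in S^{down}_\ell}\bigg(\frac{|\neigh_{G'}(v)|}{|S^{up}_\ell|}\bigg)^u
\geq g(1-\delta)^u.
$$
Since $N_\ell\leq g$ deterministically, the reverse Markov inequality with threshold $g(1-\delta)^{u+1}$ yields the conditional probability $\geq\frac{(1-\delta)^u-(1-\delta)^{u+1}}{1-(1-\delta)^{u+1}}\geq(1-\delta)^u\delta$, whence $\Prob\{N_\ell\geq g(1-\delta)^{u+1}\}\geq(\gamma(1-\delta))^u\delta$. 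Summing over $\ell$, the number $Z$ of ``good'' blocks satisfies $\Exp\,Z\geq k(\gamma(1-\delta))^u\delta=2k\tau$ with $\tau:=\frac{\delta}{2}(\gamma(1-\delta))^u$; a second reverse Markov applied to $Z$ (using $Z\leq k$) then gives $\Prob\{Z\geq k\tau\}\geq \frac{2k\tau-k\tau}{k-k\tau}\geq\tau$, which is exactly the first assertion.

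For the expectation bound, I would unfold the expected count as
$$
\sum_{(x_1,\dots,x_r)\in\mathcal B}\Prob\big\{\{x_1,\dots,x_r\}\subset\cneigh_{G'}(Y_1,\dots,Y_u)\big\}
\cdot\mathbf 1\big[|\cneigh_{G'}(x_1,\dots,x_r)|\leq s\big],
$$
where $\mathcal B$ is the (deterministic) collection of $r$-tuples satisfying the block-structure clauses in the definition of $\mathcal M_{G'}(r,w;\cdot)$. Because $Y_1,\dots,Y_u$ are i.i.d.\ uniform on $V^{up}_{G'}$, the probability factor equals $(|\cneigh_{G'}(x_1,\dots,x_r)|/|V^{up}_{G'}|)^u$, hence is $\leq(s/|V^{up}_{G'}|)^u$ whenever the indicator is $1$. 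The problem then reduces to counting $|\mathcal B|$: pick an ordered $(w+1)$-tuple of distinct block indices $(\ell_0,\ell_1,\dots,\ell_w)$ (giving $k!/(k-w-1)!$ choices), then an ordered $(r-w)$-tuple of distinct vertices inside $S^{down}_{\ell_0}$ ($g!/(g-r+w)!$ choices), and finally one vertex from each of $S^{down}_{\ell_1},\dots,S^{down}_{\ell_w}$ ($g^w$ choices); since the $\ell_a$'s are distinct, the blocks are disjoint and the resulting $x_i$'s are automatically all distinct. Multiplying these three counts yields exactly the stated upper bound.

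The whole argument is routine in flavor, but the one slightly delicate point is in the first part: the Jensen/convexity step must be executed inside the conditional distribution on $\{Y_i\in S^{up}_\ell\}$ rather than unconditionally, so that the prefactor $\gamma^u$ appears multiplicatively (and does not get diluted by an unconditional Jensen bound over the whole of $V^{up}_{G'}$), and the two reverse-Markov applications must be tuned so that the single threshold $\tau=\frac{\delta}{2}(\gamma(1-\delta))^u$ simultaneously controls the density of good blocks and the probability of the desired event.
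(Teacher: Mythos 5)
Your proposal is correct and follows essentially the same approach as the paper: conditioning on $\{Y_1,\dots,Y_u\in S^{up}_\ell\}$, applying the Jensen/first-moment computation from Lemma~\ref{akejnfpeifjnpfijnqf} inside the induced block to get the conditional expectation $\geq g(1-\delta)^u$, two reverse-Markov steps with the same thresholds, and the identical tuple-counting argument $\frac{k!}{(k-w-1)!}\cdot\frac{g!}{(g-r+w)!}\cdot g^w$ combined with the $(s/|V^{up}_{G'}|)^u$ bound for the second assertion. No gaps.
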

\begin{proof}
Fix for a moment any $1\leq \ell\leq k$. 
We have
\begin{equation}\label{lsjkfhb40387kjf}
\Prob\big\{Y_1,\dots,Y_u\in S^{up}_{\ell}\big\}\geq \gamma^u.
\end{equation}
On the other hand, conditioned on the event $\{Y_1,\dots,Y_u\in S^{up}_{\ell}\}$,
we get, by repeating the first part of the argument from the proof of Lemma~\ref{akejnfpeifjnpfijnqf},
$$
\Exp\,\Big[\big|\cneigh_{G'}(Y_1,\dots,Y_u)\cap S^{down}_{\ell}\big|\;\big|\;
Y_1,\dots,Y_u\in S^{up}_{\ell}\Big]\geq 
g\,(1-\delta)^u.
$$
Since the size of the intersection $\cneigh_{G'}(Y_1,\dots,Y_u)\cap S^{down}_{\ell}$
is deterministically bounded above by $g$, we obtain
\begin{align*}
g\,(1-\delta)^u&\leq
\Exp\,\Big[\big|\cneigh_{G'}(Y_1,\dots,Y_u)\cap S^{down}_{\ell}\big|\;\big|\;
Y_1,\dots,Y_u\in S^{up}_{\ell}\Big]\\
&\leq
g\,\Prob\big\{\big|\cneigh_{G'}(Y_1,\dots,Y_u)\cap S^{down}_{\ell}\big|\geq 
g\,(1-\delta)^{u+1}\;\big|\;Y_1,\dots,Y_u\in S^{up}_{\ell}\big\}\\
&+g\,(1-\delta)^{u+1}\,\big(1-\Prob\big\{\big|\cneigh_{G'}(Y_1,\dots,Y_u)\cap S^{down}_{\ell}\big|\geq
g\,(1-\delta)^{u+1}\;\big|\;Y_1,\dots,Y_u\in S^{up}_{\ell}\big\}\big),
\end{align*}
and hence
\begin{align*}
\Prob\big\{\big|\cneigh_{G'}(Y_1,\dots,Y_u)\cap S^{down}_{\ell}\big|\geq 
g\,(1-\delta)^{u+1}\;\big|\;Y_1,\dots,Y_u\in S^{up}_{\ell}\big\}&\geq
\frac{(1-\delta)^u-(1-\delta)^{u+1}}{1-(1-\delta)^{u+1}}\\
&\geq 
\delta(1-\delta)^u.
\end{align*}
That, together with \eqref{lsjkfhb40387kjf}, implies
$$
\Prob\big\{\big|\cneigh_{G'}(Y_1,\dots,Y_u)\cap S^{down}_{\ell}\big|\geq 
g\,(1-\delta)^{u+1}\big\}\geq \delta(1-\delta)^u\cdot \gamma^u.
$$
Since the last estimate is true for every $1\leq \ell\leq k$, we get
\begin{align*}
\bigg(k-k\cdot \frac{\delta}{2}\big(\gamma(1-\delta)\big)^u\bigg)\,\Prob\big\{&\big|\cneigh_{G'}(Y_1,\dots,Y_u)\cap S^{down}_{\ell}\big|\geq 
g\,(1-\delta)^{u+1}\\
&\mbox{ for at least $k\cdot \frac{\delta}{2}\big(\gamma(1-\delta)\big)^u$
indices $\ell$}\big\}
+k\cdot \frac{\delta}{2}\big(\gamma(1-\delta)\big)^u\\
&\hspace{-1cm}\geq \sum_{\ell=1}^k \Prob\big\{\big|\cneigh_{G'}(Y_1,\dots,Y_u)\cap S^{down}_{\ell}\big|\geq 
g\,(1-\delta)^{u+1}\big\}\geq k\, \delta(1-\delta)^u\cdot \gamma^u,
\end{align*}
and hence
\begin{align*}
\Prob\big\{&\big|\cneigh_{G'}(Y_1,\dots,Y_u)\cap S^{down}_{\ell}\big|\geq 
g\,(1-\delta)^{u+1}\\
&\mbox{ for at least $k\cdot \frac{\delta}{2}\big(\gamma(1-\delta)\big)^u$
indices $\ell$}\big\}
\geq \frac{\delta}{2}\big(\gamma(1-\delta)\big)^u.
\end{align*}

Further, take any ordered $r$--tuple $(x_1,\dots,x_r)$ of elements of $V^{down}_{G'}$
satisfying the following conditions:
\begin{itemize}
\item The vertices $x_1,\dots,x_r$ are distinct;
\item There are distinct indices $1\leq \ell_0,\ell_1,\dots,\ell_w\leq k$ such that
$x_1,\dots,x_{r-w}\in S^{down}_{\ell_0}$ and for every $1\leq a\leq w$, $x_{r-w+a}\in S^{down}_{\ell_a}$;
\item $|\cneigh_{G'}(x_1,\dots,x_r)|\leq s$.
\end{itemize}
Clearly, the probability of the event $\{Y_1,\dots,Y_u\in\cneigh_{G'}(x_1,\dots,x_r)\}$
can be bounded above by $\big(\frac{s}{|V^{up}_{G'}|}\big)^u$. Thus, the expected number of 
all ordered $r$--tuples satisfying the above three conditions and contained in the common
neighborhood of $Y_1,\dots,Y_u$, is at most
$$
\bigg(\frac{s}{|V^{up}_{G'}|}\bigg)^u\,\frac{k!}{(k-w-1)!}\frac{g^w\,g!}{(g-r+w)!}.
$$
\end{proof}

\begin{lemma}[Embedding into a block-structured graph]\label{ahgvhfgvfhgvcjhjh}
Let $G'= (V^{up}_{G'},V^{down}_{G'},E_{G'})$ be a block-structured bipartite graph
with parameters $(\delta,\gamma,k,g)$, let $n,w,u\geq 1$ with $n-w\geq 2$, 
and assume that
\begin{equation}\label{uyatcuewrcjqtcfqj}
k\cdot \frac{\delta}{2}(\gamma(1-\delta))^u\geq 2^w,\quad g\,(1-\delta)^{u+1}\geq 2^{n-w}.
\end{equation}
and
\begin{equation}\label{cagcckygiuycqtviy}
64\,\bigg(\frac{\delta}{4}\big(\gamma(1-\delta)\big)^u\bigg)^{-1}
\,\bigg(\frac{2^{n-1}}{|V^{up}_{G'}|}\bigg)^u
\big((1-\delta)^{u+1}\big)^{-n} \bigg(\frac{\delta}{2}\big(\gamma(1-\delta)\big)^u\bigg)^{-w-1}<2^{-n+1}.
\end{equation}
Then the hypercube $\{-1,1\}^n$ can be embedded into $G'$.
\end{lemma}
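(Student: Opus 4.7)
The strategy is to implement the ``block embedding'' of the hypercube described in the introduction. Partition $\mathcal T:=\{v\in\{-1,1\}^n:v\text{ has an odd number of }-1\text{'s}\}$ into $\mathcal T_b$ ($b\in\{-1,1\}^w$) by the last $w$ coordinates, noting $|\mathcal T_b|=2^{n-w-1}$. Write $\mathcal A:=\frac{\delta}{2}(\gamma(1-\delta))^u$ and $\Delta:=g(1-\delta)^{u+1}$, and for $a\in\{1,\ldots,w\}$ let $b^{(a)}$ denote $b$ with its $a$-th coordinate flipped. The plan is to (i) locate ``good'' blocks by dependent random choice; (ii) embed each $\mathcal T_b$ into a different good block via a uniform random injection; (iii) show that the common neighborhood of the $n$ already-embedded neighbors of every remaining cube vertex has size $>2^{n-1}$ with positive probability; and (iv) extend the embedding greedily to $\{-1,1\}^n\setminus\mathcal T$.

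For step (i), draw $Y_1,\ldots,Y_u$ i.i.d.\ uniform in $V^{up}_{G'}$ and apply Lemma~\ref{adkjnfpiffiqfijfakdfk}. Its first assertion yields, with probability at least $\mathcal A$, that at least $k\mathcal A\ge 2^w$ blocks $\ell$ satisfy $|\cneigh_{G'}(Y_1,\ldots,Y_u)\cap S^{down}_\ell|\ge\Delta\ge 2^{n-w}$, while its second assertion bounds in expectation the number of $n$-tuples in $\mathcal M_{G'}(n,w;Y_1,\ldots,Y_u)$ whose common neighborhood has size at most $s:=2^{n-1}$. A standard Markov step (with failure parameter $L=2/\mathcal A$) combines these into a single event, of probability at least $\mathcal A/2$, under which both the good-blocks condition and the bound $\#\{\text{bad tuples}\}\lesssim L(s/|V^{up}_{G'}|)^u k^{w+1}g^n$ hold; fix such a realization of $Y$. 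Because Lemma~\ref{adkjnfpiffiqfijfakdfk} counts bad tuples across all $\sim k^{w+1}$ block configurations, whereas only $2^w$ will be used by the embedding, we now randomize the choice of configurations: pick an injection $\phi:\{-1,1\}^w\to\{\text{good blocks}\}$ uniformly at random. A fixed configuration $(\ell_0,\ldots,\ell_w)$ equals $(\phi(b),\phi(b^{(1)}),\ldots,\phi(b^{(w)}))$ for some $b$ with probability $\lesssim 2^w(w+1)!/(k\mathcal A)^{w+1}$, so another Markov step fixes $\phi$ such that the number of bad tuples in the $2^w$ configurations actually arising from $\phi$ is bounded by a multiple of $\mathcal A^{-(w+1)}(s/|V^{up}_{G'}|)^u g^n$.

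For step (ii), define a random map $f:\mathcal T\to V^{down}_{G'}$ by letting $f|_{\mathcal T_b}$ be a uniform random injection of $\mathcal T_b$ into $\cneigh_{G'}(Y_1,\ldots,Y_u)\cap S^{down}_{\phi(b)}$, with these injections mutually independent across $b\in\{-1,1\}^w$. Call $v\in\{-1,1\}^n\setminus\mathcal T$ \emph{bad} if $|\cneigh_{G'}(f(\neigh_{Q_n}(v)))|\le s$. For each such $v$ with last $w$ coordinates $b$, the tuple $f(\neigh_{Q_n}(v))$ is uniform over admissible ordered $n$-tuples with block configuration $(\phi(b),\phi(b^{(1)}),\ldots,\phi(b^{(w)}))$, and each admissible tuple is attained with probability at most $\sim 1/\Delta^n$ (using $\Delta\ge 2^{n-w}$ to make the falling factorial $\Delta(\Delta-1)\cdots(\Delta-n+w+1)$ comparable to $\Delta^{n-w}$). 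Summing $\Prob_f[v\text{ bad}]$ over the $2^{n-1}$ choices of $v$ and over the bad tuples counted in the previous step, the inequality \eqref{cagcckygiuycqtviy} is calibrated precisely to give $\Exp_f[\#\{\text{bad }v\}]<1$. In particular, with positive probability no $v$ is bad; fix such a realization of $f$.

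Steps (iii) and (iv) are now straightforward: process the $2^{n-1}$ vertices of $\{-1,1\}^n\setminus\mathcal T$ in any order, and at each step observe that the common neighborhood of the images of the $n$ already-embedded neighbors has more than $s=2^{n-1}$ elements, while at most $2^{n-1}-1$ vertices of $V^{down}_{G'}$ have been used previously; hence an unused target vertex is always available, yielding the desired embedding of $Q_n$ into $G'$. The main technical obstacle is the counting argument in the third paragraph: the $\phi$-randomization is essential because, without it, the naive $k^{w+1}$ factor from Lemma~\ref{adkjnfpiffiqfijfakdfk} would dominate and prevent any useful bound; the $\mathcal A^{-(w+1)}$ factor in \eqref{cagcckygiuycqtviy} precisely reflects this conversion, and the fine balance between the three sources of randomness ($Y$, $\phi$, and the in-block injections) is what makes the elaborate hypothesis \eqref{cagcckygiuycqtviy} exactly the right quantitative trade-off.
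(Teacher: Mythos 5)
Your overall strategy — dependent random choice via Lemma~\ref{adkjnfpiffiqfijfakdfk} to locate good blocks, then a random facet-wise ``block'' embedding of $\mathcal T$, then a union bound over the $2^{n-1}$ remaining cube vertices, then greedy completion — is indeed the approach taken in the paper. However, there is a genuine quantitative gap in your intermediate ``$\phi$-Markov'' step, and the step itself is an unnecessary detour that the paper avoids.

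The paper does \emph{not} fix the block assignment $\phi$ (the paper's $(Z_b)_{b\in\{-1,1\}^w}$) before randomizing the in-block injections. Instead, it keeps both sources of randomness live and observes that, for a fixed $v\in\{-1,1\}^n\setminus\mathcal T$, the image tuple $(f(v^{(1)}),\dots,f(v^{(n)}))$ is then \emph{uniformly distributed} over \emph{all} admissible $n$-tuples whose block configuration $(\ell_0,\dots,\ell_w)$ ranges over ordered $(w+1)$-tuples of distinct elements of $L$, with the $\tilde S^{down}_\ell$-sets all of equal cardinality $p=\lceil g(1-\delta)^{u+1}\rceil$. This makes $\Prob[v\text{ bad}]$ exactly the ratio of bad tuples to all $\ge \frac{q!}{(q-w-1)!}\cdot p^w\frac{p!}{(p-n+w)!}$ admissible tuples, where $q=\lceil k\frac{\delta}{2}(\gamma(1-\delta))^u\rceil$; the $\sim k^{w+1}$ factor from Lemma~\ref{adkjnfpiffiqfijfakdfk} cancels automatically against the $\sim q^{w+1}$ in the denominator. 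No second Markov step is needed, and no loss is incurred.

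Your separate Markov step over $\phi$ would, if executed with the correct estimate, give exactly the same bound: a fixed ordered configuration $(\ell_0,\dots,\ell_w)$ equals $(\phi(b),\phi(b^{(1)}),\dots,\phi(b^{(w)}))$ for some $b\in\{-1,1\}^w$ with probability exactly $\frac{2^w}{|L|(|L|-1)\cdots(|L|-w)}$, which is at most $\frac{2^{2w+1}}{(k\mathcal A)^{w+1}}$ since $|L|\ge k\mathcal A\ge 2^w\ge 2w$. But you wrote $\lesssim 2^w(w+1)!/(k\mathcal A)^{w+1}$, replacing the falling factorial $|L|(|L|-1)\cdots(|L|-w)$ by $|L|^{w+1}/(w+1)!$. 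That estimate is a valid upper bound, but it is off by a factor of roughly $(w+1)!/2^{w+1}$, which is \emph{not} absorbed by the constant $64$ in~\eqref{cagcckygiuycqtviy}: in the application (proof of Theorem~\ref{thmain}) one takes $w=\Theta(n)$, so $(w+1)!=2^{\Theta(n\log n)}$ overwhelms any slack of the form $2^{O(n)}$. In short, the configuration is an \emph{ordered} $(w+1)$-tuple, so the relevant quantity is $\big(\tfrac{|L|!}{(|L|-w-1)!}\big)^{-1}$, not the unordered $\binom{|L|}{w+1}^{-1}\sim(w+1)!/|L|^{w+1}$; conflating these breaks the calibration of~\eqref{cagcckygiuycqtviy} that your last paragraph relies on. Replacing your estimate by the exact falling-factorial probability (or, more simply, dropping the $\phi$-Markov step entirely as the paper does) repairs the argument.
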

\begin{proof}
Let $\big(S^{up}_\ell\big)_{\ell=1}^k$,
$\big(S^{down}_\ell\big)_{\ell=1}^k$ be the corresponding
compatible sequences of subsets of vertices of $G'$.
Applying Lemma~\ref{adkjnfpiffiqfijfakdfk}, we get that
there are vertices $y_1,\dots,y_u\in V^{up}_{G'}$ such that
\begin{itemize}
\item $\big|\cneigh_{G'}(y_1,y_2,\dots,y_u)\cap S^{down}_{\ell}\big|\geq 
g\,(1-\delta)^{u+1}$
for at least $k\cdot \frac{\delta}{2}\big(\gamma(1-\delta)\big)^u$
indices $\ell$;
\item
the number of $n$--tuples $(x_1,\dots,x_n)\in \mathcal M_{G'}(n,w;y_1,\dots,y_u)$
with $|\cneigh_{G'}(x_1,\dots,x_n)|\leq 2^{n-1}$ is bounded above by
\begin{equation}\label{akjdnfapkfjnadfsfgrk}
\bigg(\frac{\delta}{4}\big(\gamma(1-\delta)\big)^u\bigg)^{-1}
\bigg(\frac{2^{n-1}}{|V^{up}_{G'}|}\bigg)^u\,\frac{k!}{(k-w-1)!}\frac{g^w\,g!}{(g-n+w)!}.
\end{equation}
\end{itemize}
Further, let $L\subset [k]$ be the subset of all indices $\ell$ such that
$\big|\cneigh_{G'}(y_1,y_2,\dots,y_u)\cap S^{down}_{\ell}\big|\geq 
g\,(1-\delta)^{u+1}$, and for every $\ell\in L$, let $\tilde S^{down}_{\ell}$ be any subset
of $\cneigh_{G'}(y_1,y_2,\dots,y_u)\cap S^{down}_{\ell}$ of cardinality
$\big\lceil g\,(1-\delta)^{u+1} \big\rceil$. Observe that in view of \eqref{uyatcuewrcjqtcfqj} we have
$$
|L|\geq 2^w\quad\mbox{and}\quad |\tilde S^{down}_{\ell}|\geq 2^{n-w}\;\;\mbox{for all $\ell\in L$}.
$$
Now, we construct a random mapping
$$f:\mathcal T=\big\{v\in \{-1,1\}^n:\;\mbox{vector $v$ has an odd number of $-1$'s}\big\}\to V^{down}_{G'}$$
as follows.
Let $(\mathcal T_b)_{b\in\{-1,1\}^w}$ be a partition of $\mathcal T$, where
$$
\mathcal T_b:=\big\{v\in \mathcal T:\,(v_{n-w+1},\dots,v_n)=b\big\},\quad b\in \{-1,1\}^w.
$$
Observe that $|\mathcal T_b|=2^{n-w-1}$.
Let $(Z_b)_{b\in\{-1,1\}^w}$ be the random $2^w$--tuple uniformly distributed on the collection
of all $2^w$--tuples of distinct indices from $L$. Then, conditioned on $(Z_b)_{b\in\{-1,1\}^w}$,
for every $b\in \{-1,1\}^w$ we let $f((v:\,v\in \mathcal T_b))$ be a random $2^{n-w-1}$--tuple
[conditionally] uniformly distributed on the collection of all $2^{n-w-1}$--tuples of distinct
elements of $\tilde S^{down}_{Z_b}$, and we require that
the random vectors $f((v:\,v\in \mathcal T_b))$, $b\in\{-1,1\}^w$
are [conditionally] mutually independent.
We recall at this point that the sets $\tilde S^{down}_{\ell}$, $\ell\in L$, are pairwise disjoint, and hence the mapping $f$
is injective everywhere on the probability space. Furthermore, the image of $f$ is contained within $\cneigh_{G'}(y_1,\dots,y_u)$.

Pick any vertex $v\in \{-1,1\}^n\setminus \mathcal T$ and let $v^{(1)},\dots,v^{(n)}$ be the neighbors
of $v$ in the hypercube ordered in such a way that
$$
\big(v^{(1)}_{n-w+1},\dots,v^{(1)}_{n}\big)=\dots=\big(v^{(n-w)}_{n-w+1},\dots,v^{(n-w)}_{n}\big)
=(v_{n-w+1},\dots,v_n),
$$
that is, for $b=(v_{n-w+1},\dots,v_n)\in\{-1,1\}^w$ we have $v^{(1)},\dots,v^{(n-w)}\in \mathcal T_b$.
Observe that the random $n$--tuple $\big(f(v^{(1)}),\dots,f(v^{(n)})\big)$
is uniformly distributed on the set of all $n$--tuples of distinct vertices $(x_1,\dots,x_n)$ such that 
there
are distinct indices $\ell_0,\ell_1,\dots,\ell_w\in L$ with
$x_1,\dots,x_{n-w}\in \tilde S^{down}_{\ell_0}$ and $x_{n-w+a}\in \tilde
S^{down}_{\ell_a}$ for every $1\leq a\leq w$.
Each $n$--tuple $(x_1,\dots,x_n)$ satisfying the above conditions belongs to the set
$\mathcal M_{G'}(n,w;y_1,\dots,y_u)$, and the total number of such $n$--tuples
is estimated from below by
$$
\frac{q!}{(q-w-1)!}\frac{p^w\,p!}{(p-n+w)!},
$$
where $q:=\big\lceil k\cdot \frac{\delta}{2}\big(\gamma(1-\delta)\big)^u\big\rceil$, and
$p:=\lceil g\,(1-\delta)^{u+1}\rceil$.
In view of the bound \eqref{akjdnfapkfjnadfsfgrk} and the
assumptions on the parameters, the probability that the number
of common neighbors of $f(v^{(1)}),\dots,f(v^{(n)})$ in $G'$ is less than $2^{n-1}$, is less than
\begin{align*}
&\bigg(\frac{\delta}{4}\big(\gamma(1-\delta)\big)^u\bigg)^{-1}
\frac{\big(\frac{2^{n-1}}{|V^{up}_{G'}|}\big)^u\,\frac{k!}{(k-w-1)!}\frac{g^w\,g!}{(g-n+w)!}}
{\frac{q!}{(q-w-1)!}\frac{p^w\,p!}{(p-n+w)!}}\\
&\leq \bigg(\frac{\delta}{4}\big(\gamma(1-\delta)\big)^u\bigg)^{-1}
\bigg(\frac{2^{n-1}}{|V^{up}_{G'}|}\bigg)^u
\big((1-\delta)^{u+1}\big)^{-w}
\bigg(\frac{k-w}{q-w}\bigg)^{w+1}\bigg(\frac{g-n+w+1}{p-n+w+1}\bigg)^{n-w}.
\end{align*}
In view of the first inequality in \eqref{uyatcuewrcjqtcfqj},
we have
\begin{align*}
\bigg(\frac{k-w}{q-w}\bigg)^{w+1}
&\leq \Big(\frac{\delta}{2}\big(\gamma(1-\delta)\big)^u\Big)^{-w-1}
\bigg(1+
  \frac{w}{k\cdot \frac{\delta}{2}\big(\gamma(1-\delta)\big)^u-w}\bigg)^{w+1}\\
  &\leq
  \Big(\frac{\delta}{2}\big(\gamma(1-\delta)\big)^u\Big)^{-w-1}
\bigg(1+
  \frac{w}{2^w-w}\bigg)^{w+1}\\
  & \leq 8\Big(\frac{\delta}{2}\big(\gamma(1-\delta)\big)^u\Big)^{-w-1},
\end{align*}
and, similarly, by the second inequality in \eqref{uyatcuewrcjqtcfqj},
\begin{align*}
\bigg(\frac{g-n+w+1}{p-n+w+1}\bigg)^{n-w}&\leq \big((1-\delta)^{-u-1}\big)^{n-w}
\bigg(\frac{g\,(1-\delta)^{u+1}}{g\,(1-\delta)^{u+1}-n+w+1}\bigg)^{n-w}\\
&\leq \big((1-\delta)^{-u-1}\big)^{n-w}
\bigg(\frac{2^{n-w}}{2^{n-w}-n+w+1}\bigg)^{n-w}\\
&\leq 8\big((1-\delta)^{-u-1}\big)^{n-w}.
\end{align*}
Hence,
\begin{align*}
\Prob&\big\{|\cneigh_{G'}(f(v^{(1)}),\dots,f(v^{(n)}))|\leq 2^{n-1}\big\}\\
&\leq 64
\bigg(\frac{\delta}{4}\big(\gamma(1-\delta)\big)^u\bigg)^{-1}
\bigg(\frac{2^{n-1}}{|V^{up}_{G'}|}\bigg)^u
\big((1-\delta)^{u+1}\big)^{-n}\Big(\frac{\delta}{2}\big(\gamma(1-\delta)\big)^u\Big)^{-w-1}<2^{-n+1},
\end{align*}
where the last inequality follows from the assumption \eqref{cagcckygiuycqtviy}.
Taking the union bound, we get that
with a positive probability, for every $v\in\{-1,1\}^n\setminus \mathcal T$, the set $f(\neigh_{\{-1,1\}^n}(v))$
has at least $2^{n-1}$ common neighbors in $G'$.
A simple iterative procedure then produces an embedding of $\{-1,1\}^n$ into $G'$,
completing the proof.
\end{proof}

\section{Trichotomy}\label{s:dich}

In this section, we show that any bipartite graph $G$ has at least one of the following three properties:
it either contains a large collection of non-condensed common neighborhoods of vertices,
or a large subgraph of higher edge density, or a large block-structured subgraph as defined in Section~\ref{s:blockstructured}.
As a corollary of the trichotomy we obtain the main result of the paper.

\begin{lemma}\label{akfhgvfiayfvewifyviyqvi}
Let $h,r>0$ be parameters.
Let $G'= (V^{up}_{G'},V^{down}_{G'},E_{G'})$ be a bipartite graph,
and let $Y_1,\dots,Y_r$ be i.i.d.\ uniform random vertices in $V^{up}_{G'}$.
Assume further that
$$
\Exp\,\big|\cneigh_{G'}(Y_1,\dots,Y_r)\big|\geq h.
$$
Then there is a subset $S\subset V^{down}_{G'}$ of size at least $h/2$
such that the induced subgraph of $G'$
on $V^{up}_{G'}\sqcup S$ has density at least $\big(\frac{h}{2 |V^{down}_{G'}|}\big)^{1/r}$.
\end{lemma}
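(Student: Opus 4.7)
The plan is to translate the expectation hypothesis into a degree-sum statement and then apply a simple threshold/pigeonhole argument.

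First, for each $v\in V^{down}_{G'}$, let $d(v):=|\neigh_{G'}(v)|$. By linearity of expectation and independence,
\[
\Exp\,\big|\cneigh_{G'}(Y_1,\dots,Y_r)\big|
=\sum_{v\in V^{down}_{G'}}\Prob\{Y_1,\dots,Y_r\in\neigh_{G'}(v)\}
=\sum_{v\in V^{down}_{G'}}\bigg(\frac{d(v)}{|V^{up}_{G'}|}\bigg)^r,
\]
so the hypothesis reads $\sum_{v\in V^{down}_{G'}}(d(v)/|V^{up}_{G'}|)^r\geq h$.

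Next I would set the threshold
\[
\alpha^\ast:=\bigg(\frac{h}{2|V^{down}_{G'}|}\bigg)^{1/r}
\]
and define $S:=\{v\in V^{down}_{G'}:\, d(v)/|V^{up}_{G'}|\geq \alpha^\ast\}$. The contribution of the complement to the degree-power sum is at most $|V^{down}_{G'}|\cdot(\alpha^\ast)^r=h/2$, so the vertices of $S$ must contribute at least $h/2$. Since each summand is bounded by $1$, this immediately yields $|S|\geq h/2$, which is the first required bound.

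For the density bound, observe that by construction every $v\in S$ satisfies $d(v)\geq \alpha^\ast|V^{up}_{G'}|$, hence
\[
\frac{\sum_{v\in S}d(v)}{|V^{up}_{G'}|\,|S|}\geq \alpha^\ast=\bigg(\frac{h}{2|V^{down}_{G'}|}\bigg)^{1/r},
\]
which is exactly the density of the induced subgraph on $V^{up}_{G'}\sqcup S$. This completes the argument. There is no real obstacle here: the only substantive observation is that the per-vertex probabilities $(d(v)/|V^{up}_{G'}|)^r$ are automatically bounded by $1$, which is what lets a lower bound on the sum be transferred into a lower bound on the cardinality of the heavy set $S$ while simultaneously giving the required density via the choice of threshold.
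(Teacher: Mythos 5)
Your proof is correct and follows essentially the same approach as the paper: rewrite the expectation as $\sum_v (d(v)/|V^{up}_{G'}|)^r$, threshold at $(h/2|V^{down}_{G'}|)^{1/r}$, observe the complement contributes at most $h/2$, and use the bound $(d(v)/|V^{up}_{G'}|)^r\le 1$ to convert the remaining mass into a cardinality lower bound on $S$. The density conclusion then follows directly from the threshold, exactly as in the paper.
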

\begin{proof}
For every vertex $v\in V^{down}_{G'}$, let $p_v\in[0,1]$ be the probability of the event $\{Y_1\in\neigh_{G'}(v)\}$, so that
$$
\sum_{v\in V^{down}_{G'}}p_v^r=
\Exp\,\big|\cneigh_{G'}(Y_1,\dots,Y_r)\big|\geq h
=|V^{down}_{G'}|\,\frac{h}{|V^{down}_{G'}|}.
$$
Let $S$ be the collection of all $v\in V^{down}_{G'}$ with $p_v^r\geq \frac{h}{2 |V^{down}_{G'}|}$,
and note that
$$
|S|\geq
\sum_{v\in S}p_v^r\geq \frac{h}{2}.
$$
The result follows.
\end{proof}

\begin{lemma}\label{aifgvajhcvaiy}
There is a universal constant $c_{\text{\tiny\ref{aifgvajhcvaiy}}}\in(0,1]$ with the following property.
Let $h,r,M>0$ and $p\in(0,1]$ be parameters.
Let $G'= (V^{up}_{G'},V^{down}_{G'},E_{G'})$ be a bipartite graph, and assume that
$(v_1,v_2)$ is a pair of vertices in $V^{down}_{G'}$ such that the collection of common neighborhoods
$\big\{\cneigh_{G'}(y_1,\dots,y_r):\,(y_1,\dots,y_r)\in \cneigh_{G'}(v_1,v_2)^r\big\}$
is $(p,M)$--condensed.
Further, assume that 
for i.i.d.\ uniform random vertices $Y_1,\dots,Y_r$ in $\cneigh_{G'}(v_1,v_2)$,
$$
\Exp\,\big|\cneigh_{G'}(Y_1,\dots,Y_r)\big|\leq h.
$$
Then there is a subset $S\subset V^{down}_{G'}$ of size at least $c_{\text{\tiny\ref{aifgvajhcvaiy}}}pM$
such that the induced subgraph of $G'$ on
$\cneigh_{G'}(v_1,v_2)\sqcup S$ has density at least
$$
\bigg(\frac{c_{\text{\tiny\ref{aifgvajhcvaiy}}} pM}{-h\log_2 (p/2)}\bigg)^{1/r}.
$$
\end{lemma}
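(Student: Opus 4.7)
Let $U := \cneigh_{G'}(v_1,v_2) \subset V^{up}_{G'}$ and, for each $w \in V^{down}_{G'}$, write $p_w := |\neigh_{G'}(w) \cap U|/|U|$. The density of the induced bipartite subgraph of $G'$ on $U \sqcup S$ equals the average of $p_w$ over $w \in S$, so if I take $S$ of the form $\{w : p_w \geq t\}$ for some threshold $t$, the density on $U \sqcup S$ is automatically at least $t$. The task therefore reduces to choosing $t$ so that $|\{w : p_w \geq t\}|$ is of order $pM$ while $t$ is of order $(pM/h)^{1/r}$.

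The two inputs come from the moment identities at the heart of dependent random choice. Exactly as in the first part of the proof of Lemma~\ref{akejnfpeifjnpfijnqf},
$$
\sum_{w \in V^{down}_{G'}} p_w^r \;=\; \Exp\,\big|\cneigh_{G'}(Y_1,\ldots,Y_r)\big| \;\leq\; h
$$
by the standing assumption. Letting $\tilde Y_1,\ldots,\tilde Y_r$ be an independent copy of $Y_1,\ldots,Y_r$, the probability that a given $w \in V^{down}_{G'}$ lies in $\cneigh_{G'}(Y_1,\ldots,Y_r) \cap \cneigh_{G'}(\tilde Y_1,\ldots,\tilde Y_r)$ is $p_w^{2r}$ by independence, so the expected intersection size is $\sum_w p_w^{2r}$. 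The $(p,M)$-condensation hypothesis says this intersection has size $\geq M$ with probability $\geq p$, so
$$
\sum_{w \in V^{down}_{G'}} p_w^{2r} \;\geq\; pM.
$$

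With these in hand I would set $t := (pM/(2h))^{1/r}$ and $S := \{w : p_w \geq t\}$. The low tail is controlled by
$$
\sum_{w:\,p_w<t} p_w^{2r} \;\leq\; t^r \sum_{w:\,p_w<t} p_w^r \;\leq\; t^r h \;=\; pM/2,
$$
whence $\sum_{w \in S} p_w^{2r} \geq pM/2$; since $p_w \leq 1$, this forces $|S| \geq \sum_{w \in S} p_w^{2r} \geq pM/2$, while each $w \in S$ contributes $p_w \geq t$, so the density on $U \sqcup S$ is at least $(pM/(2h))^{1/r}$. Because $-\log_2(p/2) = \log_2(2/p) \geq 1$ for every $p \in (0,1]$, taking $c_{\text{\tiny\ref{aifgvajhcvaiy}}} := 1/2$ yields simultaneously $|S| \geq c_{\text{\tiny\ref{aifgvajhcvaiy}}}\,pM$ and density at least $\big(c_{\text{\tiny\ref{aifgvajhcvaiy}}}\,pM/(-h\log_2(p/2))\big)^{1/r}$, as claimed (in fact the $\log_2$ factor appears to be slack). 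The plan has no real obstacle: the only calibration is to balance the truncated tail $t^r h$ against the target $pM$, after which the elementary inequality $|S| \geq \sum_{w \in S} p_w^{2r}$ (available precisely because $p_w \leq 1$) converts the $2r$-th moment lower bound into the required set-size estimate.
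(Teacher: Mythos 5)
Your proof is correct and takes a genuinely different, and in fact cleaner, route than the paper's. You work directly with the degree fractions $p_w = |\neigh_{G'}(w)\cap\cneigh_{G'}(v_1,v_2)|/|\cneigh_{G'}(v_1,v_2)|$ and exploit two moment identities: the expectation hypothesis gives $\sum_w p_w^r \leq h$, and, because $Y_1,\dots,Y_r,\tilde Y_1,\dots,\tilde Y_r$ are i.i.d.\ uniform on $\cneigh_{G'}(v_1,v_2)$, the expected overlap equals $\sum_w p_w^{2r}$, which the condensation hypothesis bounds below by $pM$ via Markov. A single threshold on $p_w$ then finishes the argument. The paper instead performs a dyadic decomposition of the conditional probability of a large overlap, applies a pigeonhole to pick an index $i_0$, fixes a concrete tuple $y_1,\dots,y_r$ realizing that scale, and then runs a threshold argument inside $\cneigh_{G'}(y_1,\dots,y_r)$. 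Your approach sidesteps all of this conditioning and the pigeonhole, and it even delivers the cardinality bound $|S|\geq pM/2$ without any logarithmic loss; the paper's argument, tracked carefully, only gives $|S|\gtrsim pM/(-\log_2(p/4))$, so your observation that the $\log_2$ factor in the statement is slack is accurate and your proof actually meets the stated lemma more tightly than the one in the paper. One small stylistic point: it would be worth spelling out in a line that the density of the induced subgraph on $\cneigh_{G'}(v_1,v_2)\sqcup S$ equals $\frac{1}{|S|}\sum_{w\in S}p_w$ (as you assert), since the lemma statement asks for this induced density and not the density relative to all of $V^{up}_{G'}$; your choice of $p_w$ already encodes this correctly.
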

\begin{proof}
Let $\tilde Y_1,\dots,\tilde Y_r$ be i.i.d.\ uniform random vertices in $\cneigh_{G'}(v_1,v_2)$
mutually independent with $Y_1,\dots,Y_r$. Further, for every $i\geq 0$
let $\Event_i$ be the event (measurable with respect to collection of variables $(Y_1,\dots,Y_r)$) that
$$
\Prob\big\{\big|\cneigh_{G'}(Y_1,\dots,Y_r)\cap \cneigh_{G'}(\tilde Y_1,\dots,\tilde Y_r)\big|
\geq M\;|\; Y_1,\dots,Y_r\big\}\in (2^{-i-1},2^{-i}].
$$
By the properties of the conditional probability and in view of the definition of a $(p,M)$--condensed collection,
we have
\begin{align*}
\Exp\,&\Prob\big\{\big|\cneigh_{G'}(Y_1,\dots,Y_r)\cap \cneigh_{G'}(\tilde Y_1,\dots,\tilde Y_r)\big|
\geq M\;|\; Y_1,\dots,Y_r\big\}\\
&=
\Prob\big\{\big|\cneigh_{G'}(Y_1,\dots,Y_r)\cap \cneigh_{G'}(\tilde Y_1,\dots,\tilde Y_r)\big|
\geq M\big\}\geq p.
\end{align*}
On the other hand, for every non-negative random variable $\xi$ we have
$$
\Exp\,\xi\leq \sum_{i\geq 0}2^{-i}\,\Prob\big\{\xi \in (2^{-i-1},2^{-i}]\big\}.
$$
It follows that
$$
\sum_{i\geq 0}2^{-i}\,\Prob(\Event_i)\geq p,
$$
and hence there is an index $0\leq i_0\leq -2\log_2 (p/4)$ with
$$
2^{-{i_0}}\,\Prob(\Event_{i_0})\geq \frac{p}{-2\log_2 (p/4)}.
$$
Using the assumption that $\Exp\,\big|\cneigh_{G'}(Y_1,\dots,Y_r)\big|\leq h$, we get a bound on the conditional expectation
$$
\Exp\,\big[\big|\cneigh_{G'}(Y_1,\dots,Y_r)\big|\;|\;\Event_{i_0}\big]\leq h\cdot \frac{1}{\Prob(\Event_{i_0})}
\leq h\cdot \frac{-2\log_2 (p/4)}{2^{i_0} p}.
$$
Thus, there is a collection of [non-random] vertices $y_1,\dots,y_r\in \cneigh_{G'}(v_1,v_2)$
with
$$
\Prob\big\{\big|\cneigh_{G'}(y_1,\dots,y_r)\cap \cneigh_{G'}(\tilde Y_1,\dots,\tilde Y_r)\big|
\geq M\big\}\in (2^{-{i_0}-1},2^{-{i_0}}]
$$
and such that $\big|\cneigh_{G'}(y_1,\dots,y_r)\big|\leq h\cdot \frac{-2\log_2 (p/4)}{2^{i_0} p}$.

For every vertex $v\in \cneigh_{G'}(y_1,\dots,y_r)$, let $p_v$ be the probability of the event $\{\tilde Y_1\in
\neigh_{G'}(v)\}$.
By our assumptions,
$$
\sum_{v\in \cneigh_{G'}(y_1,\dots,y_r)}p_v^r=
\Exp\,\big|\cneigh_{G'}(y_1,\dots,y_r)\cap \cneigh_{G'}(\tilde Y_1,\dots,\tilde Y_r)\big|
\geq 2^{-{i_0}-1}M.
$$
Let $S$ be the set of all vertices $v\in \cneigh_{G'}(y_1,\dots,y_r)$ such that
$$p_v^r\geq \frac{2^{-i_0-2}M}{h\cdot \frac{-2\log_2 (p/4)}{2^{i_0} p}}
=\frac{pM}{-8h\log_2 (p/4)},$$
and note that
$$
|S|\geq \sum_{v\in S}p_v^r\geq 2^{-{i_0}-2}M.
$$
The induced subgraph of $G'$ on $\cneigh_{G'}(v_1,v_2)\sqcup S$ has density at least
$$
\bigg(\frac{pM}{-8h\log_2 (p/4)}\bigg)^{1/r},
$$
completing the proof.
\end{proof}

\begin{prop}[The main structural proposition]\label{vviyagcvuytcvuqtycv}
Let $G= (V^{up}_{G},V^{down}_{G},E_{G})$ be a non-empty
bipartite graph of density at least $\alpha\in(0,1]$. 
Further, let $\alpha_0\leq\alpha/2$, $\mu\in(0,\alpha_0/2]$, $M\geq 1$, $r\geq 2$, and $p\in(0,1]$ be parameters and
assume that $((1-\mu)\alpha)^2\,|V^{up}_{G}|\geq r^2$.
Then at least one of the following is true:
\begin{itemize}
\item[(a)] There is a subgraph $G'= (V^{up}_{G'},V^{down}_{G'},E_{G'})$ of $G$ of density at least $(1-\mu)\alpha$,
with $V^{up}_{G'}:=V^{up}_{G}$ and $V^{down}_{G'}:=V^{down}_{G}$,
and an $(\alpha_0,(1-\mu)\alpha,\mu,r,C_{\text{\tiny\ref{dfalnkfjnefwehfbqlwuh}}}(\alpha_0,\mu)\, r^3)$--standard
vertex pair $(v_1,v_2)$ in $V^{down}_{G'}$
such that the collection of neighborhoods
$$\big\{\cneigh_{G'}(y_1,\dots,y_r):\,(y_1,\dots,y_r)\in \cneigh_{G'}(v_1,v_2)^r\big\}$$
is {\bf not} $(p,M)$--condensed.
\item[(b)] There is a subgraph $G'= (V^{up}_{G'},V^{down}_{G'},E_{G'})$ of $G$ with
$|V^{up}_{G'}|\geq \frac{\alpha^2}{2}|V^{up}_{G}|$, $|V^{down}_{G'}|\geq h/2$ of density at least $\big(\frac{h}{2 |V^{down}_{G}|}\big)^{1/r}$.
\item[(c)] There is a block-structured subgraph $G'= (V^{up}_{G'},V^{down}_{G'},E_{G'})$ of $G$
with $V^{up}_{G'}:=V^{up}_{G}$ and
with parameters
$$
\bigg(1-\bigg(\frac{c_{\text{\tiny\ref{aifgvajhcvaiy}}} pM}{-h\log_2 (p/2)}\bigg)^{1/r},
(1-\mu)^3\alpha^2,
\bigg\lceil\frac{\mu\alpha\,|V^{down}_{G}|}{\lceil
c_{\text{\tiny\ref{aifgvajhcvaiy}}}pM\rceil}\bigg\rceil,
\lceil c_{\text{\tiny\ref{aifgvajhcvaiy}}}pM\rceil\bigg).
$$
\end{itemize}
\end{prop}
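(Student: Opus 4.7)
The plan is to attempt an iterative construction of the case-(c) block structure, using each iteration as a chance to exit into case (a) or case (b) whenever the corresponding conditions are detected.

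Initialize $T_0 := \emptyset$. At step $\ell \geq 0$, let $G_\ell$ denote the subgraph of $G$ on the full vertex sets $V^{up}_G \sqcup V^{down}_G$ obtained by deleting every edge incident to $T_\ell$. As long as $|T_\ell| \leq \mu\alpha|V^{down}_G|$, a direct edge count gives that the density of $G_\ell$ is at least $\alpha - |T_\ell|/|V^{down}_G| \geq (1-\mu)\alpha$, so Lemma~\ref{dfalnkfjnefwehfbqlwuh} (invoked with density parameter $(1-\mu)\alpha$) produces a pair $(v_1^{(\ell)}, v_2^{(\ell)})$ that is $(\alpha_0, (1-\mu)\alpha, \mu, r, C_{\text{\tiny\ref{dfalnkfjnefwehfbqlwuh}}} r^3)$-standard in $G_\ell$; since every vertex of $T_\ell$ is isolated in $G_\ell$, the pair must lie in $V^{down}_G \setminus T_\ell$.

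We then test this pair in $G_\ell$. If its collection of common neighborhoods fails to be $(p,M)$-condensed, case (a) holds with $G' := G_\ell$. If it is condensed and $\Exp|\cneigh_{G_\ell}(Y_1, \dots, Y_r)| \geq h$ for $Y_i$ uniform in $\cneigh_{G_\ell}(v_1^{(\ell)}, v_2^{(\ell)})$, then Lemma~\ref{akfhgvfiayfvewifyviyqvi}, applied to the induced subgraph on $\cneigh_{G_\ell}(v_1^{(\ell)}, v_2^{(\ell)}) \sqcup V^{down}_G$, delivers case (b). Otherwise, Lemma~\ref{aifgvajhcvaiy} produces a set $S^{down}_\ell \subset V^{down}_G \setminus T_\ell$ of size at least $g := \lceil c_{\text{\tiny\ref{aifgvajhcvaiy}}} pM\rceil$ whose induced subgraph on $\cneigh_G(v_1^{(\ell)}, v_2^{(\ell)}) \sqcup S^{down}_\ell$ has density at least $1 - \delta$; by the explicit construction of $S$ in the proof of that lemma, every $v \in S^{down}_\ell$ carries a uniform degree lower bound into $\cneigh_G(v_1^{(\ell)}, v_2^{(\ell)})$, so this density bound is preserved after trimming $S^{down}_\ell$ to a set of exactly $g$ vertices. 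Declaring $S^{up}_\ell := \cneigh_G(v_1^{(\ell)}, v_2^{(\ell)})$, whose cardinality is at least $(1-\mu)^3 \alpha^2 |V^{up}_G| = \gamma|V^{up}_G|$, we update $T_{\ell+1} := T_\ell \cup S^{down}_\ell$ and continue.

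Iterating for $\ell = 0, 1, \dots, k-1$ with $k := \lceil \mu\alpha|V^{down}_G|/g \rceil$, the subgraph of $G$ supported on $V^{up}_G \sqcup \bigsqcup_\ell S^{down}_\ell$, with each $S^{down}_\ell$ connected only to $S^{up}_\ell$ after removing residual edges, witnesses case (c) with the stated parameters. The main obstacle I expect is the bookkeeping: one must verify that $|T_\ell| \leq \mu\alpha|V^{down}_G|$ for all $\ell < k$ so that each $G_\ell$ retains density $(1-\mu)\alpha$; that the uniform degree bound on $S$ in Lemma~\ref{aifgvajhcvaiy} indeed permits trimming to exactly $g$ vertices without degrading the $1-\delta$ block density; and in the case (b) branch that $\cneigh_G(v_1^{(\ell)}, v_2^{(\ell)})$, of size at least $(1-\mu)^3\alpha^2|V^{up}_G|$, majorizes $\frac{\alpha^2}{2}|V^{up}_G|$ under the standing constraint $\mu \leq \alpha_0/2$, which may call for treating the first iteration separately using a standard pair of density parameter $\alpha$ rather than $(1-\mu)\alpha$.
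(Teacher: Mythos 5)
Your proposal reproduces the paper's iterative peel-off argument essentially verbatim: at each step, delete a small block of $V^{down}_G$, re-run Lemma~\ref{dfalnkfjnefwehfbqlwuh} on the thinned graph (density still $\geq (1-\mu)\alpha$) to extract a standard pair, exit into case (a) or case (b) if the corresponding test succeeds, and otherwise harvest a dense block via Lemma~\ref{aifgvajhcvaiy}; the bookkeeping on $|T_\ell|$ and the trim-to-exactly-$g$-vertices step both go through exactly as you indicate. The concern you flag about whether the common neighborhood of the standard pair has at least $\frac{\alpha^2}{2}|V^{up}_G|$ elements is in fact a small imprecision present in the paper's own proof of (b): the argument yields $|V^{up}_{G'}|\geq(1-\mu)^3\alpha^2|V^{up}_G|$, and $(1-\mu)^3\geq\frac12$ can fail when $\mu$ lies between roughly $0.206$ and the permitted ceiling $\alpha_0/2\leq\frac14$. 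Your suggested fix of handling the first iteration separately would not suffice, since case (b) can be triggered at any later $\ell$ where the density has already dropped to $(1-\mu)\alpha$; the cleaner patch is to state (b) with the constant $(1-\mu)^3\alpha^2$ or to add the benign hypothesis $(1-\mu)^3\geq\frac12$. Either way, the discrepancy is immaterial for the application to Theorem~\ref{thmain}, where $\mu=10^{-10}$.
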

\begin{proof}
The proof is accomplished via an iterative process.

Set
$\ell:=1$, $G^{(1)}:=G$,
so that $G^{(1)}$ has density at least $\alpha$.

\bigskip

\noindent {\bf{}Beginning of cycle}

\medskip

At the start of the cycle, we assume that we are given a bipartite subgraph $G^{(\ell)}$ of $G$
having the same vertex set as $G$, with the edge density
$$
\alpha^{(\ell)}\geq \alpha-\frac{(\ell-1)\cdot \lceil
c_{\text{\tiny\ref{aifgvajhcvaiy}}}pM\rceil}{|V^{down}_{G}|}\geq (1-\mu)\alpha.
$$
Applying Lemma~\ref{dfalnkfjnefwehfbqlwuh},
we obtain an $(\alpha_0,(1-\mu)\alpha,\mu,r,C_{\text{\tiny\ref{dfalnkfjnefwehfbqlwuh}}} r^3)$--standard
ordered pair $(v_1^{(\ell)},v_2^{(\ell)})$ in $V^{down}_{G^{(\ell)}}$.
If the collection of neighborhoods
$\big\{\cneigh_{G^{(\ell)}}(y_1,\dots,y_r):\,(y_1,\dots,y_r)\in \cneigh_{G^{(\ell)}}(v_1^{(\ell)},v_2^{(\ell)})^r\big\}$
is not $(p,M)$--condensed then we arrive at the option (a) and complete the proof.

Otherwise, if the i.i.d.\ uniform random elements $Y_1^{(\ell)},\dots,Y_r^{(\ell)}$ of
$\cneigh_{G^{(\ell)}}(v_1^{(\ell)},v_2^{(\ell)})$ satisfy
$$
\Exp\,\big|\cneigh_{G^{(\ell)}}(Y_1^{(\ell)},\dots,Y_r^{(\ell)})\big|\geq h
$$
then, by Lemma~\ref{akfhgvfiayfvewifyviyqvi},
there is an induced subgraph $G'= (V^{up}_{G'},V^{down}_{G'},E_{G'})$ of $G^{(\ell)}$ with
$V^{up}_{G'}:=\cneigh_{G^{(\ell)}}(v_1^{(\ell)},v_2^{(\ell)})$ and $|V^{down}_{G'}|\geq h/2$
of density at least $\big(\frac{h}{2 |V^{down}_{G}|}\big)^{1/r}$, implying (b) and completing the proof.

Otherwise, $\big\{\cneigh_{G^{(\ell)}}(y_1,\dots,y_r):\,(y_1,\dots,y_r)\in \cneigh_{G^{(\ell)}}(v_1^{(\ell)},v_2^{(\ell)})^r\big\}$
is $(p,M)$--condensed and
$$
\Exp\,\big|\cneigh_{G^{(\ell)}}(Y_1,\dots,Y_r)\big|\leq h.
$$
Lemma~\ref{aifgvajhcvaiy} then implies that
there is a subset $S_\ell^{down}\subset V^{down}_{G^{(\ell)}}$ of non-isolated vertices of size $\lceil
c_{\text{\tiny\ref{aifgvajhcvaiy}}}pM\rceil$
such that the induced subgraph of $G^{(\ell)}$ on
$\cneigh_{G^{(\ell)}}(v_1^{(\ell)},v_2^{(\ell)})\sqcup S_\ell^{down}$ has density at least
$$
\bigg(\frac{c_{\text{\tiny\ref{aifgvajhcvaiy}}} pM}{-h\log_2 (p/2)}\bigg)^{1/r}.
$$
Set $S_\ell^{up}:=\cneigh_{G^{(\ell)}}(v_1^{(\ell)},v_2^{(\ell)})$ and note that
$|S_\ell^{up}|\geq (1-\mu)((1-\mu)\alpha)^2|V^{up}_{G}|$.
If $\ell\geq \frac{\mu\alpha\,|V^{down}_{G}|}{\lceil
c_{\text{\tiny\ref{aifgvajhcvaiy}}}pM\rceil}$ then we exit the cycle and proceed with the rest of the proof.
Otherwise, we 
let $G^{(\ell+1)}= (V^{up}_{G^{(\ell+1)}},V^{down}_{G^{(\ell+1)}},E_{G^{(\ell+1)}})$
be the subgraph of $G^{(\ell)}$
obtained from $G^{(\ell)}$ by removing all edges adjacent to vertices in the set $S_\ell^{down}$.
By our construction process, the edge density $\alpha^{(\ell+1)}$ of $G^{(\ell+1)}$ satisfies
$$
\alpha^{(\ell+1)}\geq
\alpha-
\frac{\ell\cdot \lceil
c_{\text{\tiny\ref{aifgvajhcvaiy}}}pM\rceil}{|V^{down}_{G}|}\geq (1-\mu)\alpha.
$$
Set
$$
\ell:=\ell+1
$$
and return to the beginning of the cycle.

\noindent {\bf{}End of cycle}

\bigskip

Set
$$
k:=\bigg\lceil\frac{\mu\alpha\,|V^{down}_{G}|}{\lceil
c_{\text{\tiny\ref{aifgvajhcvaiy}}}pM\rceil}\bigg\rceil.
$$
Upon completion of the cycle, we have two collections of subsets $(S_\ell^{up})_{\ell=1}^k$
and $(S_\ell^{down})_{\ell=1}^k$ such that for each $\ell$, $|S_\ell^{down}|=\lceil
c_{\text{\tiny\ref{aifgvajhcvaiy}}}pM\rceil$, $|S_\ell^{up}|\geq (1-\mu)^3\alpha^2|V^{up}_{G}|$, and
the induced subgraph of $G$ on
$S_\ell^{up}\sqcup S_\ell^{down}$ has density at least
$$
\bigg(\frac{c_{\text{\tiny\ref{aifgvajhcvaiy}}} pM}{-h\log_2 (p/2)}\bigg)^{1/r}.
$$
Observe further that the sets $(S_\ell^{down})_{\ell=1}^k$ are necessarily disjoint by construction.
Thus, $G$ contains a block-structured subgraph with parameters
$$
\bigg(1-\bigg(\frac{c_{\text{\tiny\ref{aifgvajhcvaiy}}} pM}{-h\log_2 (p/2)}\bigg)^{1/r},
(1-\mu)^3\alpha^2,
\bigg\lceil\frac{\mu\alpha\,|V^{down}_{G}|}{\lceil
c_{\text{\tiny\ref{aifgvajhcvaiy}}}pM\rceil}\bigg\rceil,
\lceil c_{\text{\tiny\ref{aifgvajhcvaiy}}}pM\rceil\bigg).
$$
The result follows.
\end{proof}

\begin{proof}[Proof of Theorem~\ref{thmain}]
Let
$$
\mu:=10^{-10},\quad\alpha:=\frac{1}{2},\quad \alpha_0:=0.1,\quad c:=c'-100\mu,
$$
where $c'= 0.03657...$ is the positive solution of the quadratic equation $64(c')^2+25c'-1=0$.
We will assume that $n$ is sufficiently large and let $m:=2^{n-1}$. 
Let $G=(V^{up}_{G},V^{down}_{G},E_{G})$ be a bipartite
graph of density at least $\alpha=\frac{1}{2}$, where $|V^{up}_{G}|=|V^{down}_{G}|=\lceil 2^{2n-cn}\rceil$.
Define
$$
M:=\bigg\lfloor \frac{c_{\text{\tiny\ref{kjfaoifboqfuifhbluhb}}}^2\,((1-\mu)\alpha)^{4n}|V^{down}_{G}|^4}{2^9 C_{\text{\tiny\ref{dfalnkfjnefwehfbqlwuh}}}^2\,m^3 \,n^6 
\cdot 400\log^2 |V^{down}_{G}|}\bigg\rfloor=(1-\mu)^{4n}\,2^{n-4cn+o(n)}
$$
and
$$
p:=\bigg(\frac{c_{\text{\tiny\ref{kjfaoifboqfuifhbluhb}}}\,((1-\mu)\alpha)^{2n}\,|V^{down}_{G}|^2}
{16\cdot 3^7 C_{\text{\tiny\ref{dfalnkfjnefwehfbqlwuh}}}\, n^3 m^2
\cdot 20\log |V^{down}_{G}|}\bigg)^2=(1-\mu)^{4n}\,2^{-4cn+o(n)},
$$
and note that with this definition and our assumption that $n$ is large,
we have $M\geq 1$ and $n^2\leq p\cdot ((1-\mu)\alpha)^n |V^{down}_{G}|$.
Further, set
$$
u:=\lfloor \sqrt{n}\rfloor,
$$
define $w\in \N$ via the relation
$$
n-w=\Big\lfloor\log_2\Big(\lceil c_{\text{\tiny\ref{aifgvajhcvaiy}}}pM\rceil\cdot 2^{-u-1}\Big)\Big\rfloor
=n-8cn+8n\log_2(1-\mu)+o(n),
$$
and define $h>0$ via the formula
$$
\bigg(\frac{c_{\text{\tiny\ref{aifgvajhcvaiy}}} pM}{-h\log_2 (p/2)}\bigg)^{(n+w)/n}
=2^{2w+cn-n}(1-\mu)^{-3n},
$$
so that
$$
h=(1-\mu)^{8n}\,2^{n-8cn+o(n)}\big(2^{17cn-16n\log_2(1-\mu)-n}(1-\mu)^{-3n}\big)^{-\frac{1}{1+8c-8\log_2(1-\mu)}}.
$$
To evaluate the magnitude of $h$, note that the right hand side of the last identity can be crudely bounded from below by
$$
\exp(-100\mu n+o(n))\,2^{n-8cn}\big(2^{17cn-n}\big)^{-\frac{1}{1+8c}}
=\exp(-100\mu n+o(n))\,2^{n+n\cdot\frac{1-25c-64c^2}{1+8c}},
$$
where, in view of the definition of $c$, $\frac{1-25c-64c^2}{1+8c}\geq \frac{25\cdot 100\mu}{1+8c}\geq 1250\mu$,
and hence $h\geq 2^{n+1000\mu n+o(n)}$.

Applying Proposition~\ref{vviyagcvuytcvuqtycv} with $r:=n$,
we get that at least one of the conditions (a), (b), (c) there holds.
Below, we deal with each of the three possibilities.
\begin{itemize}

\item[(a)] 
In this case, we are given a subgraph
$G'= (V^{up}_{G'},V^{down}_{G'},E_{G'})$ of $G$ of density at least $(1-\mu)\alpha$,
with $V^{up}_{G'}:=V^{up}_{G}$ and $V^{down}_{G'}:=V^{down}_{G}$,
and an $(\alpha_0,(1-\mu)\alpha,\mu,r,C_{\text{\tiny\ref{dfalnkfjnefwehfbqlwuh}}}(\alpha_0,\mu)\, r^3)$--standard
vertex pair $(v_1,v_2)$ in $V^{down}_{G'}$
such that the collection of neighborhoods
$$\big\{\cneigh_{G'}(y_1,\dots,y_r):\,(y_1,\dots,y_r)\in \cneigh_{G'}(v_1,v_2)^r\big\}$$
is not $(p,M)$--condensed.
Observe that the assumptions of Lemma~\ref{fljknfefjwfpiwjff} (with $\alpha$ replaced with $(1-\mu)\alpha$)
hold.
Applying Lemma~\ref{fljknfefjwfpiwjff}, we get that $Q_n$ can be embedded into $G$.

\item[(b)] In this case,
there is a subgraph $G'= (V^{up}_{G'},V^{down}_{G'},E_{G'})$ of $G$ with
$|V^{up}_{G'}|\geq \frac{1}{8}|V^{up}_{G}|$, $|V^{down}_{G'}|\geq h/2$,
of density at least $\big(\frac{h}{2 |V^{down}_{G}|}\big)^{1/n}$.
Since $h\geq 2^{n+1000\mu n+o(n)}$ and assuming $n$ is large, we have
$$
|V^{up}_{G'}|\geq 2^{n+\tilde c n}\cdot \frac{2 |V^{down}_{G}|}{h},\quad
|V^{down}_{G'}|\geq 2^{n+\tilde c n},\quad \bigg(\frac{h}{2 |V^{down}_{G}|}\bigg)^{1/n}\geq \tilde c,
$$
for a universal constant $\tilde c>0$.
Applying Corollary~\ref{aljfhbefwhbfoufbowfub}, we obtain an embedding of $Q_n$ into $G$.

\item[(c)] In this case,
$G$ contains a block-structured subgraph $G'=(V^{up}_{G'},V^{down}_{G'},E_{G'})$
with $V^{up}_{G'}=V^{up}_{G}$ and with parameters
\begin{align*}
(\delta,\gamma,k,g):=\bigg(&1-\bigg(\frac{c_{\text{\tiny\ref{aifgvajhcvaiy}}} pM}{-h\log_2 (p/2)}\bigg)^{1/n},
(1-\mu)^3\alpha^2,
\bigg\lceil\frac{\mu\alpha\,|V^{down}_{G}|}{\lceil
c_{\text{\tiny\ref{aifgvajhcvaiy}}}pM\rceil}\bigg\rceil,
\lceil c_{\text{\tiny\ref{aifgvajhcvaiy}}}pM\rceil\bigg)\\
=\bigg(&1-\bigg(\frac{c_{\text{\tiny\ref{aifgvajhcvaiy}}} pM}{-h\log_2 (p/2)}\bigg)^{1/n},
\frac{(1-\mu)^3}{4},\\
&(1-\mu)^{-8n}\,2^{n+7cn+o(n)},
(1-\mu)^{8n}\,2^{n-8cn+o(n)}\bigg).
\end{align*}
In view of our definition of $u,w,h$ and assuming that $n$ is sufficiently large, we have
$$
k\cdot \frac{\delta}{2}(\gamma(1-\delta))^u\geq 2^w,\quad g\,(1-\delta)^{u+1}\geq 2^{n-w}.
$$
and
$$
64\,\bigg(\frac{\delta}{4}\big(\gamma(1-\delta)\big)^u\bigg)^{-1}
\,\bigg(\frac{2^{n-1}}{|V^{up}_{G}|}\bigg)^u
\big((1-\delta)^{u+1}\big)^{-n} \bigg(\frac{\delta}{2}\big(\gamma(1-\delta)\big)^u\bigg)^{-w-1}<2^{-n+1}.
$$
Hence, applying Lemma~\ref{ahgvhfgvfhgvcjhjh}, we obtain an embedding of $Q_n$ into $G$.
The proof is complete.
\end{itemize}

\end{proof}

\bigskip

\begin{Remark}[Algorithmic perspective]
It should be expected that our construction of the hypercube embedding can be turned into a randomized algorithm
which runs in time polynomial in the number of vertices of the ambient graph.
We leave this as an open question.
\end{Remark}

\appendix

\section{The random graph $\Gamma_{\varepsilon,n}$}\label{lfuhb93476ajshbflj}

The goal of this section is to show that for every $\varepsilon>0$ and any sufficiently large $n$,
there exists a bipartite graph on $\Theta(2^{2n-\varepsilon n})$ vertices and with the edge
density $1/2$
which does not admit embedding of the hypercube $Q_n$ via the randomized procedure {\bf(I)--(II)} from the introduction.

\bigskip

Fix a small parameter $\varepsilon>0$, an integer $n\in\N$.
Let $V^{up}_{\varepsilon,n}$ and $V^{down}_{\varepsilon,n}$ be two disjoint sets with
$|V^{up}_{\varepsilon,n}|=|V^{down}_{\varepsilon,n}|= 2\cdot\big\lceil 2^{n-\varepsilon n/2}\big\rceil^2$,
and assume that
the set $V^{down}_{\varepsilon,n}$
is partitioned into $2\cdot\lceil 2^{n-\varepsilon n/2}\rceil$ subsets $V^{down}_{\varepsilon,n}(i)$, $1\leq i\leq 2\cdot\lceil 2^{n-\varepsilon n/2}\rceil$,
of size $\lceil 2^{n-\varepsilon n/2}\rceil$ each.
Further, for each $v\in V^{up}_{\varepsilon,n}$, let $I_v$ be 
a uniform random $\lceil 2^{n-\varepsilon n/2}\rceil$--subset of $\big\{1,\dots,2\cdot\lceil 2^{n-\varepsilon n/2}\rceil\big\}$,
and assume that the sets $I_v$, $v\in V^{up}_{\varepsilon,n}$, are mutually independent.
Consider a random bipartite graph $\Gamma_{\varepsilon,n}=(V^{up}_{\varepsilon,n},V^{down}_{\varepsilon,n},E_{\varepsilon,n})$,
where the edge set $E_{\varepsilon,n}$ is comprised of all unordered pairs of vertices $\{v,w\}$, $v\in V^{up}_{\varepsilon,n}$,
$w\in \bigcup_{i\in I_v}V^{down}_{\varepsilon,n}(i)$.
Note that the edge density of $\Gamma_{\varepsilon,n}$ is $1/2$ everywhere on the probability space.
Our goal is to prove

\begin{prop}
For every $\varepsilon\in(0,1]$ there is $n_\varepsilon\in\N$ such that
given $n\geq n_\varepsilon$ and the random bipartite graph
$\Gamma_{\varepsilon,n}=(V^{up}_{\varepsilon,n},V^{down}_{\varepsilon,n},E_{\varepsilon,n})$,
with a positive probability $\Gamma_{\varepsilon,n}$
has the following property. 
For every subset $S$ of $V^{up}_{\varepsilon,n}$ with $|S|\geq 2^{n-1}$
\begin{equation}\label{alfkjhbfwfhjbljqhbl}
\begin{split}
&\mbox{there is a subset $T=T(S)\subset V^{down}_{\varepsilon,n}$
with $|T|\leq 2^{n-\varepsilon n/4}$}\\
&\mbox{such that for at least half of $n$--tuples
of vertices in $S$,}\\
&\mbox{the common neighborhood of the $n$--tuple is contained in $T$.}
\end{split}
\end{equation}
\end{prop}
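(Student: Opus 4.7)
The plan is to choose $T(S)$ as a union of blocks $V^{down}_{\varepsilon,n}(i)$ for indices $i$ in a small, data-dependent ``heavy'' set, and then to control the size of this heavy set simultaneously for all admissible $S$ via the probabilistic method.

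\textbf{Defining $T(S)$ and handling ``most $n$-tuples''.} Fix $\tau := 2^{-1+\varepsilon/3}$, let $\delta_0 := \tau - \tfrac12 > 0$, and write $k := 2\lceil 2^{n-\varepsilon n/2}\rceil$. For $S \subset V^{up}_{\varepsilon,n}$, set
$$d_S(i) := |\{v \in S : i \in I_v\}|, \quad J(S) := \{i \in [k] : d_S(i) \geq \tau|S|\}, \quad T(S) := \bigcup_{i \in J(S)} V^{down}_{\varepsilon,n}(i).$$
For a uniformly random ordered $n$-tuple $(v_1,\dots,v_n) \in S^n$ the common neighborhood equals $\bigcup_{i \in \bigcap_j I_{v_j}} V^{down}_{\varepsilon,n}(i)$, and a union bound over $i \notin J(S)$ gives
$$\Prob\bigl[\cneigh_{\Gamma_{\varepsilon,n}}(v_1,\dots,v_n) \not\subset T(S)\bigr] \leq \sum_{i \notin J(S)} \Bigl(\tfrac{d_S(i)}{|S|}\Bigr)^{n} \leq k \tau^n = O\bigl(2^{-\varepsilon n/6}\bigr),$$
which is well below $1/2$ for $n$ large; this step is deterministic given $\Gamma_{\varepsilon,n}$. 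Hence, if we can show (with positive probability over the choice of $(I_v)$) that $|J(S)| \leq t := \lceil 2^{\varepsilon n/4 - 1}\rceil$ for every admissible $S$, then $|T(S)| \leq t \cdot \lceil 2^{n-\varepsilon n/2}\rceil \leq 2^{n-\varepsilon n/4}$ and the ``half of $n$-tuples'' conclusion follows.

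\textbf{Reduction to size-$t$ subsets of $[k]$.} Suppose $|J(S)| > t$ for some admissible $S$, and pick any $J \subset J(S)$ with $|J| = t$. Summing $d_S(i) \geq \tau|S|$ over $i \in J$ and switching order of summation yields $\frac{1}{|S|} \sum_{v \in S} |I_v \cap J| \geq \tau|J| = (\tfrac12 + \delta_0)|J|$; since $|I_v \cap J| \leq |J|$ deterministically, a one-line averaging argument forces the fraction of $v \in S$ with $|I_v \cap J| \geq (\tfrac12 + \tfrac{\delta_0}{2})|J|$ to be at least $\delta_0$. Defining
$$W_J := \bigl\{v \in V^{up}_{\varepsilon,n} : |I_v \cap J| \geq (\tfrac12 + \tfrac{\delta_0}{2})|J|\bigr\},$$
this yields $|W_J| \geq \delta_0 |S| \geq \delta_0 \cdot 2^{n-1}$ for some $J \subset [k]$ with $|J| = t$. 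So it suffices to prove that with probability close to $1$ no such $J$ exists. The essential point is that we have now traded the hopeless union bound over $2^{\Omega(2^{2n})}$ subsets $S$ for the much smaller $\binom{k}{t}$-element family of candidate subsets $J \subset [k]$.

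\textbf{Tail bounds and union bound.} For any fixed $J$ with $|J| = t$, the random variable $|I_v \cap J|$ is hypergeometric with mean $t/2$, and the multiplicative Chernoff bound for the hypergeometric distribution gives $\Prob[v \in W_J] \leq \exp(-c_1 \delta_0^2 t)$ for an absolute constant $c_1 > 0$. Mutual independence of the family $(I_v)_{v \in V^{up}_{\varepsilon,n}}$ makes $|W_J|$ a sum of i.i.d.\ Bernoulli indicators with
$$\Exp |W_J| \leq \exp(-c_1 \delta_0^2 \cdot 2^{\varepsilon n/4 - 1}) \cdot |V^{up}_{\varepsilon,n}| \ll \delta_0 \cdot 2^{n-1}$$
for $n$ large, and a standard upper-tail Chernoff bound then yields $\Prob[|W_J| \geq \delta_0 \cdot 2^{n-1}] \leq \exp(-c_2 \cdot 2^n)$. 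Taking a union bound over the $\binom{k}{t} \leq 2^{tn}$ choices of $J$,
$$\Prob\bigl[\exists J \subset [k],\,|J| = t,\, |W_J| \geq \delta_0 \cdot 2^{n-1}\bigr] \leq 2^{tn} \exp(-c_2 \cdot 2^n) = o(1),$$
since $tn = O(n \cdot 2^{\varepsilon n/4})$ is exponentially smaller than $2^n$ for any $\varepsilon < 4$. Combined with the first paragraph this establishes \eqref{alfkjhbfwfhjbljqhbl} with overwhelming probability. The main obstacle---and the substance of the proof---is exactly the middle paragraph: one has to parametrize the ``badness'' of an arbitrary $S$ by a combinatorially small object (the size-$t$ subset $J \subset [k]$) so that the doubly-exponential count of candidates $S$ is replaced by $\binom{k}{t} = 2^{O(n \cdot 2^{\varepsilon n/4})}$, which is comfortably absorbed by the Chernoff tail $\exp(-\Omega(2^n))$ coming from the independence of the $I_v$'s.
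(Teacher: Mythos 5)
Your proof is correct, and it takes a genuinely different route from the paper's at the point where the union bound is organized. Both arguments start the same way: given $S$, split the blocks $i\in[k]$ into a ``heavy'' set (your $J(S)$, the paper's $U$) and a ``light'' complement, observe that the light part contributes less than half of the mass $\sum_i (d_S(i)/|S|)^n$, and set $T(S)$ to be the union of the heavy blocks. The divergence is in how the event ``some admissible $S$ has too many heavy blocks'' is controlled. The paper fixes $S$, bounds $\Prob[|U|\ge\cdot]$ via Hoeffding (paying a $2^{n|S|}$ correction to pass from the conditioned hypergeometric model to i.i.d.\ Bernoulli), then sums over $L$ and finally over all $S$, relying on the fact that the per-$S$ failure probability $\exp(-c'|S|\,2^{\varepsilon n/4})$ decays fast enough in $|S|$ to beat $\binom{2^{2n}}{|S|}$. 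You instead decouple the failure event from $S$ entirely: the averaging step extracts, from any bad $S$, a size-$t$ witness $J\subset[k]$ for which the set $W_J$ (which depends only on $J$ and the random graph) has $\ge\delta_0 2^{n-1}$ elements. Since $W_J$ is a sum of $|V^{up}_{\varepsilon,n}|$ independent indicators with vanishing mean, the upper-tail Chernoff bound gives $\exp(-\Omega(2^n))$, and the union bound is only over the $\binom{k}{t}=2^{O(n\cdot 2^{\varepsilon n/4})}$ candidate witnesses $J$, with no separate union over $S$ needed. This decoupling is the one genuinely new idea, and it buys a cleaner proof: you avoid both the correction factor for the hypergeometric conditioning (using instead the hypergeometric concentration directly on $|I_v\cap J|$) and the delicate balance between the per-$S$ tail and the $\binom{2^{2n}}{m}$ count. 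Minor cosmetic points, neither affecting correctness: the proposition speaks of $n$-tuples, which for the application should be $n$-tuples of distinct vertices, but since $|S|\ge 2^{n-1}\gg n$ the two notions differ by a $1-o(1)$ factor (the paper glosses over this the same way); and $\binom{k}{t}\le k^t\le 2^{(n+1)t}$ rather than $2^{nt}$, an immaterial discrepancy.
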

\begin{proof}
Fix any $\varepsilon\in(0,1)$. We will assume that $n$ is large.
Fix for a moment any subset $S$ of $V^{up}_{\varepsilon,n}$ with $|S|\geq 2^{n-1}$.
We will estimate the probability of the event $\Event_S$ that $S$ {\it does not} satisfy \eqref{alfkjhbfwfhjbljqhbl}. 
For each $i\in \big\{1,\dots,2\cdot\lceil 2^{n-\varepsilon n/2}\rceil\big\}$,
let $\delta_i\in[0,1]$ be the [random] number defined by
$$
\delta_i=\frac{|\{v\in S:\;v\mbox{ is adjacent to $V^{down}_{\varepsilon,n}(i)$}\}|}{|S|}.
$$
Note that $\delta_i^n$ can be viewed as the proportion of ordered $n$--tuples of vertices in $S$
(with repetitions allowed) comprising $V^{down}_{\varepsilon,n}(i)$ in their common neighborhood. 
Define the random set $U$ as
$$
U:=\bigg\{i\in\{1,\dots,2\cdot\lceil 2^{n-\varepsilon n/2}\rceil\}:\;
\delta_i^n\geq \frac{1}{4\cdot\lceil 2^{n-\varepsilon n/2}\rceil}
\bigg\}.
$$
Condition for a moment on any realization of $\Gamma_{\varepsilon,n}$ such that
$|U|\leq \frac{2^{n-\varepsilon n/4}}{\lceil 2^{n-\varepsilon n/2}\rceil}$.
Since
$$
\sum_{i\in [\lceil 2^{n-\varepsilon n/2}\rceil]\setminus U}
\delta_i^n\leq \frac{1}{2},
$$
for at least half of the ordered $n$--tuples $(v_1,\dots,v_n)$ of vertices in $S$ the set difference
$$
\cneigh_{\Gamma_{\varepsilon,n}}(v_1,\dots,v_n)\setminus \bigcup\limits_{i\in U}V^{down}_{\varepsilon,n}(i)
$$
is empty, implying that $S$ satisfies the property \eqref{alfkjhbfwfhjbljqhbl} with
$T:=\bigcup\limits_{i\in U}V^{down}_{\varepsilon,n}(i)$. Thus, necessarily
$$
\Event_S\subset\bigg\{|U|\geq \frac{2^{n-\varepsilon n/4}}{\lceil 2^{n-\varepsilon n/2}\rceil}\bigg\}.
$$
Fix any non-random set $L\subset [\lceil 2^{n-\varepsilon n/2}\rceil]$ of size at least
$\frac{2^{n-\varepsilon n/4}}{\lceil 2^{n-\varepsilon n/2}\rceil}$.
The probability of the event $\{U=L\}$ can be estimated from above as
$$
\Prob\big\{U=L\big\}
\leq\Prob\Bigg\{\frac{|\{v\in S:\;i\in I_v\}|}{|S|}\geq \frac{1}{\big(4\cdot\lceil 2^{n-\varepsilon n/2}\rceil\big)^{1/n}}
\mbox{ for every }i\in L\Bigg\}.
$$
Let $(b_{v,i})$, $v\in S$, $i\in L$, be a collection of i.i.d.\ Bernoulli($1/2$) random variables. Then, in view of the definition
of the random sets $I_v$, we can write
\begin{align*}
&\Prob\Bigg\{\frac{|\{v\in S:\;i\in I_v\}|}{|S|}\geq \frac{1}{\big(4\cdot\lceil 2^{n-\varepsilon n/2}\rceil\big)^{1/n}}
\mbox{ for every }i\in L\Bigg\}\\
&\hspace{1cm}\leq 
\Prob\Bigg\{\sum_{v\in S}b_{v,i}\geq \frac{|S|}{\big(4\cdot\lceil 2^{n-\varepsilon n/2}\rceil\big)^{1/n}}
\mbox{ for every }i\in L\Bigg\}\\
&\hspace{2cm}\cdot \Prob\bigg\{\sum_{i=1}^{2\cdot\lceil 2^{n-\varepsilon n/2}\rceil}b_{v,i}=\lceil 2^{n-\varepsilon n/2}\rceil
\mbox{ for every }v\in S\bigg\}^{-1}.
\end{align*}
A crude estimate
$$
\Prob\bigg\{\sum_{i=1}^{2\cdot\lceil 2^{n-\varepsilon n/2}\rceil}b_{v,i}=\lceil 2^{n-\varepsilon n/2}\rceil
\mbox{ for every }v\in S\bigg\}^{-1}
\leq 2^{n|S|}
$$
will be sufficient for our purposes. Further, the Hoeffding inequality implies
$$
\Prob\Bigg\{\sum_{v\in S}b_{v,i}\geq \frac{|S|}{\big(4\cdot\lceil 2^{n-\varepsilon n/2}\rceil\big)^{1/n}}
\mbox{ for every }i\in L\Bigg\}
\leq \exp\big(-c|S|\cdot |L|\big)
$$
for some $c=c(\varepsilon)>0$.
Hence,
$$
\Prob\big\{U=L\big\}\leq 2^{n|S|}\exp\big(-c|S|\cdot |L|\big)\leq \exp\big(-c|S|\cdot |L|/2\big),
$$
where in the last inequality we assumed that $n$ is sufficiently large.
A union bound estimate gives
\begin{align*}
\Prob\bigg\{|U|\geq \frac{2^{n-\varepsilon n/4}}{\lceil 2^{n-\varepsilon n/2}\rceil}\bigg\}
&=\sum_{m\geq 2^{n-\varepsilon n/4}/\lceil 2^{n-\varepsilon n/2}\rceil}
\Prob\big\{|U|=m\big\}\\
&\leq \sum_{m\geq 2^{n-\varepsilon n/4}/\lceil 2^{n-\varepsilon n/2}\rceil}
\exp\big(-c|S|\cdot m/2\big)\bigg(\frac{2e\,\lceil 2^{n-\varepsilon n/2}\rceil}{m}\bigg)^m\\
&\leq \exp\big(-c|S|\cdot 2^{\varepsilon n/4}/4\big),
\end{align*}
again under the assumption of large $n$.
We conclude that the probability that the set $S$ does not satisfy \eqref{alfkjhbfwfhjbljqhbl},
is bounded above by $\exp\big(-c'|S|\cdot 2^{\varepsilon n/4}\big)$,
for some $c'=c'(\varepsilon)>0$. Another union bound estimate implies
\begin{align*}
\Prob&\big\{\mbox{$S$ does not satisfy \eqref{alfkjhbfwfhjbljqhbl} for some $S\subset
V^{up}_{\varepsilon,n}$ of size at least $2^{n-1}$}\big\}\\
&\leq \sum\limits_{m\geq 2^{n-1}}\exp\big(-c'm\cdot 2^{\varepsilon n/4}\big)\bigg(\frac{e\cdot 2^{2n}}{m}\bigg)^m<1.
\end{align*}
Hence, there is a realization of $\Gamma_{\varepsilon,n}$ with the required properties.
\end{proof}

\end{document}